\declaretheoremstyle[
headfont=\normalfont\bfseries,
headindent= 0pt,
bodyfont=\em,
spaceabove=8pt,
spacebelow=8pt
]{thm}
\declaretheoremstyle[
headfont=\normalfont\em,
headindent= 0pt,
spaceabove=8pt,
spacebelow=8pt
]{remark}
\declaretheoremstyle[
headfont=\normalfont\bfseries,
headindent= 0pt,
spaceabove=8pt,
spacebelow=8pt
]{example}
\declaretheoremstyle[
headfont=\normalfont\bfseries,
headindent= 0pt,
spaceabove=8pt,
spacebelow=8pt
]{definition}
\declaretheorem[name=Theorem,style=thm,numberwithin=section]{theorem}
\newtheorem*{thm*}{Theorem}
\declaretheorem[name=Proposition,style=thm,sibling=theorem]{proposition}
\declaretheorem[name=Corollary,style=thm,sibling=theorem]{corollary}
\declaretheorem[name=Example,style=definition,sibling=theorem]{example}
\declaretheorem[name=Remark,style=remark]{remark}
\declaretheorem[name=Definition,style=definition,sibling=theorem]{definition}
\crefname{theorem}{Theorem}{Theorems}
\crefname{proposition}{Proposition}{Propositions}
\crefname{lemma}{Lemma}{Lemmas}
\crefname{corollary}{Corollary}{Corollaries}
\crefname{example}{Example}{Examples}
\crefname{definition}{Definition}{Definitions}
\crefname{remark}{Remark}{Remarks}
\crefname{section}{Section}{Sections}
\crefname{enumi}{}{}
\crefname{equation}{}{}
\numberwithin{equation}{section}
\newcommand{\dvol}{d\mathrm{vol}}
\newcommand{\boxop}{\widetilde{\Box}}
\newcommand{\zba}{\bar{z}}
\newcommand{\pol}{\mathbb{D}^n}
\newcommand{\jba}{\bar{j}}
\newcommand{\kba}{\bar{k}}
\newcommand{\lba}{\bar{l}}
\renewcommand{\Re}{\operatorname{Re}}
\newcommand{\RR}{\mathbb{R}}
\newcommand{\CC}{\mathbb{C}}
\DeclareMathOperator{\dom}{dom}
\begin{document}

\title[The $\partial$-operator and real holomorphic vector fields] {The $\partial$-operator and real holomorphic vector fields}

\author{Friedrich Haslinger}
\address{Fakultät für Mathematik, Universität Wien, Oskar-Morgenstern-Platz 1, 1090 Wien, Austria}
\email{friedrich.haslinger@univie.ac.at}
\author{Duong Ngoc Son}
\address{Fakultät für Mathematik, Universität Wien, Oskar-Morgenstern-Platz 1, 1090 Wien, Austria}
\email{son.duong@univie.ac.at}
\date{July 29, 2020}
\thanks{The first-named author was partially supported by the Austrian Science Fund, FWF-Projekt P 28 154-N35. The second-named author was supported by the Austrian Science Fund, FWF-Projekt M 2472-N35.}

\keywords{$\partial$-complex, weighted Bergman spaces, conformally K\"ahler metrics, real holomorphic vector fields}
\subjclass[2010] {Primary  53C55, 30H20 ; Secondary 32A36, 32W50} 

\maketitle
\begin{abstract}
	Let $(M,h)$ be a Hermitian manifold and $\psi$ a smooth weight function on~$M$. The $\partial$-complex on weighted Bergman spaces $A^2_{(p,0)}(M,h, e^{-\psi})$ of holomorphic $(p,0)$-forms was recently studied in \cite{hasson} and \cite{haslinger}. It was shown that if $h$ is K\"ahler and a suitable density condition holds, the $\partial$-complex exhibits an interesting holomorphicity/duality property when $(\bar\partial\psi)^{\sharp}$ is holomorphic (i.e., when the real gradient field $\mathrm{grad}_h\psi$ is a real holomorphic vector field.) For general Hermitian metrics this property does not hold without the holomorphicity of the torsion tensor $T_p{}^{rs}$. 
	
	In this paper, we investigate the existence of real-valued weight functions with real holomorphic gradient fields on K\"ahler and conformally K\"ahler manifolds and their relationship to the $\partial$-complex on weighted Bergman spaces. For K\"ahler metrics with multi-radial potential functions on $\CC^n$ we determine all multi-radial weight functions with real holomorphic gradient fields. For conformally K\"ahler metrics on complex space forms we first identify the metrics having holomorphic torsion leading to several interesting examples such as the Hopf manifold $\mathbb{S}^{2n-1} \times \mathbb{S}^1$, and the ``half'' hyperbolic metric on the unit ball. For some of these metrics, we further determine weight functions $\psi$ with real holomorphic gradient fields. They provide a wealth of triples $(M,h,e^{-\psi})$ of Hermitian non-K\"ahler manifolds with weights for which the $\partial$-complex exhibits the aforementioned holomorphicity/duality property.  Among these examples, we study in detail the $\partial$-complex on the unit ball with the half hyperbolic metric and derive a new estimate for the $\partial$-equation.
\end{abstract}
%\tableofcontents
\section{Introduction}
Let $(M,h)$ denote a manifold of complex dimension $n$ with a Hermitian metric $h,$ and let $\psi $ be a smooth real-valued function on $M.$ Consider the Segal-Bargmann spaces 
of $(p,0)$-forms
\begin{equation*}
	A^2_{(p,0)}(M, h, e^{-\psi}) = \left\{ u= \sum_{|J|=p} \, ' u_J\, dz^J : \int_M |u|^2_h e^{-\psi} \dvol_h < \infty, \ u_J  \ {\text{holomorphic}} \right\}.
\end{equation*}
Here $J= (j_1, \dots, j_p)$ are  multiindices of length $p$ and the summation is taken over increasing indices; in holomorphic coordinates, the metric $h$ has the form $h_{j \bar k} dz^j \otimes dz^{\bar k},$ where $[h_{ j \bar k}]$ is a positive definite Hermitian matrix with smooth coefficients; the volume element induced by the metric is denoted by $\dvol_h :=\det (h_{j\bar{l}})\, d\lambda;$  
the metric $h$ induces a metric on tensors of each degree, so for $(1,0)$-forms $u =u_jdz^j$ and $v=v_jdz^j$ one has $\langle u,v \rangle_h = h^{j \bar k} u_jv_{\bar k}$ and $|u|^2_h = \langle u,u \rangle_h,$ where $[h^{j \bar k}]$ is the transpose of the inverse matrix of $[h_{j \bar k}].$  

Under suitable conditions (see \cite{haslinger}, \cite{hasson}) the complex derivative 
$$\partial u := \sum_{|J|=p} \, '  \sum_{j=1}^n \frac{\partial u_J}{\partial z_j} \, dz^j \wedge dz^J $$
is a densely defined, in general unbounded operator  
$$ \partial :A^2_{(p,0)}(M, h, e^{-\psi}) \longrightarrow  A^2_{(p+1,0)}(M, h, e^{-\psi}), \ \  0 \le p \le n-1.$$
In order to determine the adjoint operator
$$\partial^* : A^2_{(p+1,0)}(M, h, e^{-\psi}) \longrightarrow  A^2_{(p,0)}(M, h, e^{-\psi})$$
it is necessary to consider the nonvanishing Christoffel symbols for the Chern connection in local coordinates $z^1,\dots , z^n:$
\begin{equation}
\Gamma^i_{jk} = h^{i\bar{l}} \partial_j h_{k\bar{l}}, \quad \Gamma^{\bar{i}}_{\bar{j}\bar{k}} = \overline{\Gamma^i_{jk}}.
\end{equation}
For a general Hermitian metric, the torsion tensor $T^i_{jk}$ may be nontrivial; it is  defined  by
\begin{equation}
T^i_{jk} = \Gamma^i_{jk} - \Gamma^i_{kj}, \quad T^{\bar{i}}_{\bar{j}\bar{k}} = \overline{T^i_{jk}},
\end{equation}
the torsion $(1,0)$-form is then obtained by taking the trace:
\begin{equation}
\tau = T^{i}_{ji} d z^j.
\end{equation}
We use $h_{j\kba}$ and its inverse $h^{\kba l}$ to lower and raise indices. For example, raising and lowering indices of the torsion, we have
\begin{equation}\label{e:torsion}
	T_q{}^{pr}:
	= T^{\bar{i}}_{\bar{j}\bar{k}} h_{q \bar{i}} h^{p\bar{j}} h^{r\bar{k}}.
\end{equation}
In particular, for a $(0,1)$ form $w = w_{\bar{k}} \, d\bar{z}^k$, raising indices gives the ``musical'' operator $\sharp$ acting on $w$ and to produce an $(1,0)$ vector field $w^{\sharp} : = h^{k\bar{j}} w_{\bar{j}} \, \partial_{k}$. Now, if $(\bar \partial \psi - \bar \tau)^\sharp $ is a holomorphic vector field
the adjoint operator $\partial^*$ on $ \dom (\partial^*) \subset A^2_{(1,0)}(M, h, e^{-\psi})$
can be expressed in the form 
\begin{equation}\label{holvf1}
\partial^* u = \langle u, \partial \psi - \tau \rangle_h,
\end{equation} 
see \cite{hasson} for more details. If, in addition, the metric $h$ is K\"ahlerian one has $\tau = 0$ and thus
\begin{equation}
	\partial^* u = h^{j\bar k} u_j \frac{\partial \psi}{\partial \bar z^k},
\end{equation}
that means that the complex vector field 
\begin{equation}\label{realhol}
X := h^{j\bar k}  \frac{\partial \psi}{\partial \bar z^k} \frac{\partial}{\partial z^j}
\end{equation}
is holomorphic. In this case, the gradient field  $\mathrm{grad}_h \psi$ is a \textit{real holomorphic} vector field in the terminology of \cite{munteanu--wang}. There are important classes of K\"ahler manifolds admitting a function with real holomorphic gradient vector field, for instance the gradient K\"ahler-Ricci solitons, see
\cite{cao} and \cite{munteanu--wang}. The existence of real holomorphic gradient vector fields is also related to Calabi's extremal K\"ahler metric \cite{Cal} and to strong hypercontractivity of the weighted Laplacian \cite{gross1999hypercontractivity}. In \cite{munteanu--wang} it is shown that the real holomorphicity of the gradient vector field of a weight function implies Liouville theorems for weighted holomorphic, or more generally, weighted harmonic functions and mappings on $M.$ We shall see quickly that the holomorphicity of the gradient field of a conformal factor is also related to the holomorphicity of the torsion of the conformal K\"ahler metric.

Here we continue our investigation of the $\partial $-complex
\begin{equation}
	A^2(M, h, e^{-\psi}) 
	\underset{\underset{\partial^* }
		\longleftarrow}{\overset{\partial }
		{\longrightarrow}} A^2_{(1,0)}(M, h,  e^{-\psi}) \underset{\underset{\partial^* }
		\longleftarrow}{\overset{\partial }
		{\longrightarrow}} A^2_{(2,0)}(M, h, e^{-\psi}),
\end{equation}
and the corresponding complex Laplacian
\begin{equation}
	\boxop_p = \partial \partial^{\ast} + \partial^{\ast} \partial :  A^2_{(1,0)}(M, h,  e^{-\psi}) \longrightarrow  A^2_{(1,0)}(M, h,  e^{-\psi}),
\end{equation}
which, under suitable assumptions, will be a densely defined self-adjoint operator, see \cite{hasson} and \cite{haslinger}, where the classical case of the Segal-Bargmann space
with the Euclidean metric is treated.

For $(p,0)$-forms with $p\geqslant 2$, the holomorphicity of $(\bar \partial \psi - \bar \tau)^\sharp$ is not enough for the adjoint $\partial^{\ast}$ to have a simple formula analogous to \cref{holvf1}. In order to describe the formula for $\partial^*$ on $(2,0)$-forms we write 
\begin{equation}
v = \frac{1}{2}\sum_{j,k} v_{jk} dz^j \wedge dz^k = \sum_{j<k} v_{jk} dz^j \wedge dz^k,
\end{equation}
where $v_{jk} = - v_{kj}$. Define an operator $T^{\sharp} \colon \Lambda^{2,0}(M) \to \Lambda^{1,0}(M)$ by
\begin{equation}
		T^{\sharp}(v)
		=
		\frac{1}{2} T_{p}{}^{rs} v_{rs}dz^p.
\end{equation}
where $T_p{}^{rs}$ is given by \cref{e:torsion}. If $u = u_{j} dz^j$, we have
\begin{equation}
	\partial u
	=
	\frac{1}{2}\sum_{j,k} \left(\frac{\partial u_k}{\partial z^j} - \frac{\partial u_{j}}{\partial z^k}\right) dz^j \wedge dz^k.
\end{equation}
Moreover, since $v_{pq} = - v_{qp}$, we find that
\begin{align}
\left\langle \partial u, v\right\rangle_h
=
\sum_{j,k,p,q} \overline{v_{pq}}h^{k\bar{p}} h^{j\bar{q}} \left(\frac{\partial u_k}{\partial z^j}\right).
\end{align}
The formula for $\partial^*$ is then given by 
\begin{equation}\label{e:1243}
	\partial^{\ast} v
	=
	P_{h,\psi}\left(-(\psi_{\bar{j}} - \tau_{\bar{j}}) v_{pq} h^{q\bar{j}} dz^p + T^{\sharp}(v)\right) .
\end{equation}
Here, $P_{h,\psi}$ is the orthogonal projection from $L^2_{(2,0)}(M, h, e^{-\psi})$ onto $A^2_{(2,0)}(M, h, e^{-\psi}),$
see \cite{hasson}. If $h$ is K\"ahler and $(\bar{\partial} \psi)^{\sharp}$ is holomorphic then as in the case of $1$-forms,
\begin{equation}\label{e:2f}
	\partial^{\ast} v
	=
	-\psi_{\bar{j}} v_{pq} h^{q\bar{j}} dz^p .
\end{equation}
That is, the non-local orthogonal projection $P_{h,\psi}$ plays no role and $\partial^{\ast}$ reduces essentially to a ``multiplication'' operator. In the non-K\"ahler case, by inspecting \cref{e:1243}, we find that the relevant condition is the holomorphicity of the torsion tensor; the precise definition is as follows.
\begin{definition}
	Let $h$ be a Hermitian metric on a complex manifold. We say that $h$ has \textit{holomorphic torsion} if
\begin{equation}
	\nabla_{\bar{l}} T_p{}^{rs} =0,
\end{equation}
where $\nabla$ is the Chern connection. 
\end{definition}
Clearly, $h$ has holomorphic torsion if and only if the components of the torsion $T_p{}^{rs}$ (in any holomorphic coordinate frame) are holomorphic. Moreover, it implies that $\bar{\tau}^{\sharp}$ is a holomorphic $(1,0)$ vector field.

Let $D^{\ast}_p$ and $\partial^{\ast}_p$ be the Hilbert space adjoints of $\partial$ in the Lebesgue space $L^2_{(p + 1,0)}(M,h,e^{-\psi})$ and $A^2_{(p + 1,0)}(M,h,e^{-\psi})$, respectively.
In summary, we have the following theorem which generalizes \cite[Theorem 1.1]{hasson}.
\begin{theorem}\label{thm:1.1a}
	Let $(M,h)$ be a complete Hermitian manifold with weight $e^{-\psi}.$  Assume that the torsion $T_{p}{}^{rs}$ of the Chern connection is holomorphic. If $(\bar{\partial} \psi)^{\sharp}$ is holomorphic, then for $\eta \in \dom(D^{\ast}_p)$, $p\geqslant 0$, that is holomorphic in an open set $U\subset M$, $D^{\ast}_p \eta$ is also holomorphic in~$U$. In particular, if $\partial_p$ is densely defined in the Bergman space $A^2_{(p,0)}(M,h,e^{-\psi})$, then 
	\begin{equation}
		D^{\ast}_p \eta = \partial^{\ast}_p\eta
	\end{equation}
	for $\eta \in \dom(\partial^{\ast}_p).$
\end{theorem}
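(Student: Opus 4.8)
The plan is to reduce everything to the pointwise (local) formula for the Lebesgue-space adjoint $D^{\ast}_p$ and then use the two holomorphicity hypotheses to kill all non-holomorphic contributions. First I would record the pointwise expression for $D^{\ast}_p\eta$ obtained by integrating by parts in the identity $\langle \partial u, \eta\rangle_{L^2} = \langle u, D^{\ast}_p\eta\rangle_{L^2}$; completeness of $(M,h)$ is what justifies discarding boundary terms and identifies $D^{\ast}_p$ with this formal adjoint on a dense domain. For $(p+1,0)$-forms this is the unprojected version of \cref{e:1243}: schematically
$$D^{\ast}_p\eta = \underbrace{-(\psi_{\bar j}-\tau_{\bar j})\,\eta_{\cdots}\,h^{\cdots\bar j}\,dz^{\cdots} + T^{\sharp}(\eta)}_{\text{algebraic part }A(\eta)} + \underbrace{B(\bar\partial\eta)}_{\text{first-order part}},$$
where the crucial point, already visible in the one-form computation, is that the only derivatives of $\eta$ entering $B$ are the \emph{anti-holomorphic} derivatives $\partial\eta_J/\partial\bar z^{k}$ of its components (the $L^2$-adjoint of the holomorphic derivative $\partial$ produces $\bar\partial$ of the components, not $\partial$). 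Consequently, on the open set $U$ where $\eta$ is holomorphic every $\eta_J$ is holomorphic, so $\partial\eta_J/\partial\bar z^{k}\equiv 0$ there, the first-order part $B(\bar\partial\eta)$ vanishes on $U$, and we are left with $D^{\ast}_p\eta|_U = A(\eta)|_U$.

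The next step is to check that $A(\eta)$ is holomorphic on $U$, and this is exactly where the two hypotheses enter. Its coefficients are: the quantities $\psi_{\bar j}h^{q\bar j}$, which are the components of $(\bar\partial\psi)^{\sharp}$ and hence holomorphic by assumption (cf.\ \cref{realhol}); the quantities $\tau_{\bar j}h^{q\bar j}$, which are the components of $\bar\tau^{\sharp}$ and are holomorphic because holomorphic torsion implies $\bar\tau^{\sharp}$ is a holomorphic $(1,0)$ vector field (as noted right after the definition of holomorphic torsion); and the entries $T_p{}^{rs}$ of \cref{e:torsion}, which are holomorphic by the holomorphic-torsion hypothesis $\nabla_{\bar l}T_p{}^{rs}=0$. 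Each such coefficient multiplies a component $\eta_J$ that is holomorphic on $U$, so every product, and therefore $A(\eta)=D^{\ast}_p\eta|_U$, is holomorphic on $U$. This proves the first assertion.

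For the identity $D^{\ast}_p\eta=\partial^{\ast}_p\eta$ I would use that the Bergman-space adjoint is the compression of the Lebesgue-space adjoint. Comparing $\langle\partial u,\eta\rangle$ for holomorphic $u$ shows that $\partial^{\ast}_p\eta = P_{h,\psi}\big(D^{\ast}_p\eta\big)$ whenever both sides are defined, since the $A^2$- and $L^2$-inner products agree on holomorphic forms and $A^2_{(p,0)}$ is a closed subspace of $L^2_{(p,0)}$. Now for $\eta\in\dom(\partial^{\ast}_p)\subset A^2_{(p+1,0)}$ the form $\eta$ is holomorphic on all of $M$, so the first part (with $U=M$) gives that $D^{\ast}_p\eta$ is holomorphic; being also square-integrable it lies in $A^2_{(p,0)}$, whence $P_{h,\psi}$ acts as the identity on it and $\partial^{\ast}_p\eta=P_{h,\psi}(D^{\ast}_p\eta)=D^{\ast}_p\eta$. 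The density hypothesis on $\partial_p$ is what makes $\partial^{\ast}_p$ well defined in this argument.

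The main obstacle I expect is the very first step: pinning down the pointwise formula for $D^{\ast}_p$ for general $p$ and verifying that the only derivatives of $\eta$ appearing are anti-holomorphic derivatives of its components. This requires a careful integration by parts against the weight $e^{-\psi}\dvol_h$ together with the bookkeeping of raising and lowering indices, so that the multiplication coefficients are correctly matched with the components of $(\bar\partial\psi)^{\sharp}$, of $\bar\tau^{\sharp}$, and with $T_p{}^{rs}$ (in particular, the derivatives of $\log\det(h_{j\bar l})$ generated by the integration by parts must reassemble into the torsion trace $\tau$ rather than into stray non-holomorphic terms). Once that formula is in hand, the remaining steps are immediate consequences of the two holomorphicity hypotheses.
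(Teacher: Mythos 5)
Your proposal is correct and follows essentially the same route as the paper's own (summarized) argument: the unprojected version of the adjoint formula \cref{e:1243}, the observation that the only derivatives of $\eta$ entering are anti-holomorphic derivatives of its components (hence vanish on $U$), the holomorphicity of the coefficients coming from $(\bar\partial\psi)^{\sharp}$, $\bar\tau^{\sharp}$ and $T_p{}^{rs}$ under the two hypotheses, and the fact that $P_{h,\psi}$ acts as the identity on holomorphic $L^2$ forms. The paper itself delegates the detailed integration by parts and the domain bookkeeping to \cite{hasson}, so your write-up is, if anything, more explicit than what the paper records.
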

In the following, we give two examples when the theorem applies. The first example shows that in some situations it is necessary to consider non-K\"ahler Hermitian metrics.
\begin{example}[Hopf manifolds]\label{ex1} The simplest examples of Hermitian non-K\"ahler metrics with holomorphic torsion are conformal flat metrics. On $\CC^n$, these metrics are described explicitly in \cref{prop:33}. They are of the form $g_{j\kba} = \phi^{-1} \delta_{jk}$ in the standard coordinates of $\CC^n$, where $\phi$ is given in \cref{e:flata}. For example, in \cref{e:flata}, if we take $c_{j\kba}$ to be $\frac{1}{4} \times$ the identity matrix and $\gamma =0$, then we obtain the following metric on $\CC^n \setminus \{0\}$ with holomorphic torsion:
\begin{equation}\label{e:hopf}
	g_{j\kba} = \frac{4\delta_{jk}}{|z|^2}.
\end{equation}
Let $M:= \mathbb{S}^{n-1} \times \mathbb{S}^1$ be the standard $n$-dimensional Hopf manifold. It is diffeomorphic to $\left(\CC^n \setminus \{0\}\right)/G$, where $G$ is the infinite cyclic group generated by $z\mapsto \frac{1}{2}z$ acting freely and properly discontinuously on $\CC^n\setminus\{0\}$, and has the induced complex structure; see, e.g., \cite{kobayashi} for more details. The Hermitian metric $g_{j\kba}$ in \cref{e:hopf} is invariant under the action of $G$ and descents to a natural locally conformally K\"ahler metric with holomorphic torsion on the standard compact Hopf manifold. It is well-known that for $n\geqslant 2$ the second Betti number $b_2(M) = 0$ and hence $M$ admits \textit{no} K\"ahler metric; see \cite{kobayashi}.
\end{example}
\begin{example} We revisit the following example in \cite{hasson}. Let $M = \mathbb{B}^n$ be the unit ball in $\CC^n$ and let $h_{j\bar{k}} = (1-| z|^2 )^{-1} \delta_{jk}$ be a conformally flat metric. 
By direct computations, we find that the torsion
\begin{equation}
 	T_q{}^{pr}
	= z^p \delta^r_q  - z^r \delta ^p_q
\end{equation}
is nontrivial (unless $n=1$) and holomorphic. Let $\psi = \alpha \log (1 - |z|^2 )$. Then
\begin{equation}
	(\bar{\partial} \psi)^{\sharp} = - \alpha \sum_{j =1}^n z_j \frac{\partial}{\partial z^j}
\end{equation}
is a holomorphic vector field. The triple $(M,h, e^{-\psi})$ satisfies the hypothesis of \cref{thm:1.1a}, except that $h$ is not complete. The $\partial$-complex on the Bergman spaces $A^2_{(p,0)}(M,h, e^{-\psi})$ of holomorphic $(p,0)$-forms exhibits an interesting holomorphicity/duality property similar to that on the Segal-Bargmann space; see \cite{hasson}.
\end{example} 

In this paper, we investigate conformally K\"ahler manifolds with holomorphic torsion and weight functions whose gradients are a real holomorphic vector fields. The first part is devoted to K\"ahler metrics with multi-radial potential functions. It is also shown that in many cases the real holomorphic vector field is of the form 
\begin{equation}
	Z = \sum_{j=1}^n C_jz_j \, \frac{\partial}{\partial z^j},
\end{equation}
where $C_j$ are real constants. In addition, we exploit an example where some constants $C_j$ are zero, which means that the adjoint of $\partial$ ``forgets" some of the variables.

In the second part we consider conformally K\"ahler metrics. Let $(M,h)$ be a K\"ahler manifold and let $g = \phi^{-1}h$ be a conformal metric. We study the condition on $\phi$ such that $g$ has holomorphic torsion. This is the case precisely when $\mathrm{grad}_h \phi$ is a real holomorphic vector field. We determine all conformally K\"ahler metrics having holomorphic torsion on K\"ahler spaces of constant holomorphic sectional curvature. We thus obtain a wealth of examples of Hermitian manifolds with holomorphic torsion.  On some of these examples, we also determine all real-valued functions $\psi$ whose real gradient fields $\mathrm{grad}_g \psi$ are real holomorphic. On such a triple $(M,g,e^{-\psi})$, the $\partial$-complex on the weighted Bergman spaces exhibits an interesting holomorphicity/duality property.  We analyze the $\partial$-complex on the unit ball $\mathbb{B}^n: = \{z \in \CC^n \colon |z|^2 < 1\}$ endowed with the ``half'' hyperbolic metric,
\begin{equation}
	h_{j\kba} = \delta_{jk} + \frac{\zba_j z_k}{1-|z|^2},
\end{equation}
and obtain the following result.
\begin{theorem}[= \cref{prop:51}]\label{thmmain2} Let $h$ be the half hyperbolic metric on the unit ball $\mathbb{B}^n$, $\alpha < 0$, and $\psi(z) = \alpha \log (1-|z|^2)$. Then the complex Laplacian $\widetilde{\Box}_1$ has a bounded inverse $\widetilde{N}_1,$ which is a compact operator on $A^2_{(1,0)}(\mathbb{B}^n, h , e^{-\psi})$ with discrete spectrum. If
	\begin{equation}
	\nu = 
		\begin{cases}
		-\alpha, \quad & \text{if}\ \ n=1, \\
		\min\{1-\alpha, -2\alpha\}, & \text{if}\ \  n =2, \\
		n-\alpha -1, & \text{if}\ \ n\geqslant 3.
		\end{cases}
	\end{equation}
	then
	\begin{equation}
	\left\| \widetilde{N}_1 u \right\| \leqslant \frac{1}{\nu} \, \|u\|,
	\end{equation} 
	for each $u \in A^2_{(1,0)}(\mathbb{B}^n, h , e^{-\psi}).$ In fact, the first positive eigenvalue of $\boxop_1$ is $\lambda_1 = \nu$.
	
	Consequently, if $\eta = \eta_j dz_j \in A^2_{(1,0)}(\mathbb{B}^n, h, e^{-\psi})$ with $\partial \eta =0$, then $f: =\partial^{\ast} \widetilde{N}_1 \eta$ is the canonical solution of $\partial f = \eta, $ this means $\partial f = \eta $ and $f \in (\ker \partial )^\perp$. Moreover, 
	\begin{align}\label{cont61}
	\int_{\mathbb{B}^n} \left|f\right|^2 & (1-|z|^2)^{-\alpha-1} d\lambda \leqslant
	\frac{1}{\nu } \int_{\mathbb{B}^n} \left(\sum_{j=1}^n |\eta_j|^2 - \left|\sum_{j}^{n} \eta_j z_j\right|^2\right)(1-|z|^2)^{-\alpha-1}d\lambda.
	\end{align}
\end{theorem}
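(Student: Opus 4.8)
The statement concerns the complex Laplacian $\boxop_1$ on $(1,0)$-forms over the unit ball with the half hyperbolic metric and the specific weight $\psi = \alpha\log(1-|z|^2)$. The key observation is that this triple $(M,h,e^{-\psi})$ should fall under the hypotheses of Theorem 1.1a (holomorphic torsion plus holomorphicity of $(\bar\partial\psi)^\sharp$), so that $\partial^*$ has the explicit ``multiplication'' form from \cref{holvf1}. My plan is to first verify these structural facts by direct computation, then reduce the spectral analysis to an explicit eigenvalue problem on an orthonormal basis of monomials, and finally read off the norm estimate and the canonical solution bound as consequences.

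**Computing the adjoint and the Laplacian explicitly.** First I would compute, for the half hyperbolic metric $h_{j\kba} = \delta_{jk} + \zba_j z_k/(1-|z|^2)$, the inverse metric $h^{j\kba}$, the torsion $T_q{}^{pr}$, and the gradient field $(\bar\partial\psi)^\sharp$, checking that the torsion is holomorphic and that $(\bar\partial\psi)^\sharp$ is a holomorphic vector field of the form $Z = \sum_j C_j z_j\,\partial/\partial z^j$ (here the radial structure of $\psi$ strongly suggests $C_j$ constant). Given these, \cref{holvf1} yields $\partial^* u = \langle u, \partial\psi - \tau\rangle_h$ as a pointwise multiplier, so that $\boxop_1 = \partial\partial^* + \partial^*\partial$ becomes a second-order operator with explicit, polynomial coefficients. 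The main structural payoff is that $\boxop_1$ should preserve the finite-dimensional spaces spanned by forms $z^\beta\,dz^j$ of fixed total degree, turning the spectral problem into the diagonalization of a sequence of finite matrices indexed by the homogeneity degree.

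**Diagonalizing and extracting the spectrum.** Next I would construct an orthonormal (or at least orthogonal) basis of $A^2_{(1,0)}(\mathbb{B}^n, h, e^{-\psi})$ out of monomial forms, computing the relevant $L^2$ norms via the beta-integral $\int_{\mathbb{B}^n} |z^\beta|^2 (1-|z|^2)^{-\alpha-1}\,d\lambda$; the condition $\alpha < 0$ is exactly what guarantees integrability and thus that these forms actually lie in the weighted Bergman space. Acting by $\boxop_1$ on each basis block, I expect the eigenvalues to come out as affine functions of the homogeneity degree $|\beta|$ and the form index, and minimizing over all of these should produce the first positive eigenvalue $\lambda_1 = \nu$ with the three cases $n=1$, $n=2$, $n\geqslant 3$ reflecting where the minimum is attained (low-degree anomalies in dimensions $1$ and $2$ versus the generic leading term $n-\alpha-1$). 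Once $\lambda_1 = \nu > 0$ is established and the eigenvalues accumulate only at $+\infty$, the inverse $\widetilde N_1 = \boxop_1^{-1}$ is bounded with $\|\widetilde N_1\| = 1/\nu$ and compact with discrete spectrum, giving the first displayed estimate.

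**The canonical solution bound and the main obstacle.** Finally, for $\partial\eta = 0$ the canonical solution is $f = \partial^*\widetilde N_1\eta$, and $\|f\|^2 = \langle \boxop_1\widetilde N_1\eta, \widetilde N_1\eta\rangle = \langle\eta,\widetilde N_1\eta\rangle \leqslant \nu^{-1}\|\eta\|^2$ by the spectral bound; rewriting $\|\eta\|^2$ using $\langle\eta,\eta\rangle_h = \sum_j|\eta_j|^2 - |\sum_j \eta_j z_j|^2$ (the quadratic form of the \emph{inverse} half hyperbolic metric) produces the right-hand side of \cref{cont61}, with the weight $(1-|z|^2)^{-\alpha-1}$ arising from $e^{-\psi}\dvol_h$. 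I expect the main obstacle to be the eigenvalue computation itself: correctly assembling the matrix of $\boxop_1$ on each degree block — in particular handling the cross terms coming from the non-diagonal part of $h^{j\kba}$ and from the torsion contribution $T^\sharp$ — and then verifying that the minimum over all blocks is attained exactly where the three-case formula for $\nu$ says. The dimensional dependence in the definition of $\nu$ signals that the low-degree blocks must be analyzed separately rather than by a single uniform formula, and getting those boundary cases right is where the delicate bookkeeping will lie.
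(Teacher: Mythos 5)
Your outline follows the same general route as the paper (explicit multiplication formula for $\partial^{\ast}$, reduction to the finite-dimensional invariant blocks of monomial forms, then the standard $\|f\|^2 = \langle \eta, \widetilde N_1\eta\rangle$ argument for the canonical solution), but there are two genuine gaps at exactly the points where the real work lies.

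First, you propose to obtain $\partial^{\ast}u = \langle u, \partial\psi - \tau\rangle_h$ by invoking \cref{holvf1}, i.e.\ \cref{thm:1.1a}. That theorem assumes the Hermitian metric is \emph{complete}, and the half hyperbolic metric on $\mathbb{B}^n$ is not complete (the radial distance to the boundary is $\int_0^1 (1-r^2)^{-1/2}\,dr < \infty$). The paper cannot and does not cite that result here; instead \cref{prop:41} proves the formula $\partial^{\ast}u = (n-1-\alpha)\sum_j z_j u_j$ directly, by an integration-by-parts argument with a family of cutoff functions $\chi_R$, where the boundary term is shown to vanish as $R \to 1^{-}$ using that both $\sum_k u_k z_k$ and the test function $v$ lie in the weighted Bergman space $A^2_{-\alpha-1}(\mathbb{B}^n)$ and a H\"older estimate. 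Without this (or some substitute), your formula for $\partial^{\ast}$, and hence for $\boxop_1$, is unjustified.

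Second, your spectral step rests on the expectation that the eigenvalues on each degree block ``come out as affine functions of the homogeneity degree and the form index'' after diagonalization. That is not what happens: the paper emphasizes that $\boxop_1$ is \emph{not} diagonal here (unlike the Segal--Bargmann and hyperbolic-metric cases), and it never diagonalizes the general block. Only the small blocks ($m=0,1,2$ in low dimension) are computed explicitly; for general $m$ and $n$ the paper instead exploits the fact that the matrix of $\boxop_1$ on $A^2_{(1,0)}(m)$ has constant column sum $(m+1)(n-\alpha-1)$, computes the diagonal entries and the off-diagonal row sums combinatorially, and applies the Ger{\v{s}}gorin circle theorem to obtain the uniform lower bound $2(n-\alpha-2)$ together with the growth $-\alpha(m+1)\to\infty$, which gives both compactness of $\widetilde N_1$ and, after comparing with the explicitly computed low-degree eigenvalues, the three-case formula for $\lambda_1 = \nu$. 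You correctly flag the cross terms as ``the main obstacle,'' but naming the obstacle is not the same as overcoming it: without a concrete device such as Ger{\v{s}}gorin's theorem (or an equivalent lower-bound mechanism for these non-normal-looking blocks), the claims $\lambda_1 = \nu$ and $\|\widetilde N_1\| \leqslant 1/\nu$ remain unproved. The final paragraph of your proposal (the canonical solution estimate) is fine and matches the paper once the spectral facts are in place.
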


We also consider $U(n)$-invariant metrics on $\CC^n$ in a conformal class of a given $U(n)$-invariant K\"ahler metric. It is shown that there exists essentially a 2-parameter family of $U(n)$-invariant conformal metrics with holomorphic and nontrivial torsion. Moreover, with respect to such a metric, there exists essentially a 2-parameter family of weight functions with real holomorphic gradient fields.
\section{K\"ahler metrics with multi-radial potential functions}
We consider K\"ahler metrics on $\CC^n$ with \textit{multi-radial} potential functions 
\begin{equation}
	\chi(z_1,z_2,\dots, z_n)
	=
	\tilde \chi (r_1,r_2, \dots, r_n)
\end{equation}
where $r_j=|z_j|^2, \, j=1, \dots , n$. For these metrics, we can determine explicitly the multi-radial weight functions $\psi$ such that $(\bar\partial\psi)^{\sharp}$ is holomorphic.
\begin{theorem}\label{thm:21}
	Let $\chi(z) = \tilde\chi(|z_1|^2, \dots, |z_n|^2)$ be a multi-radial potential function for a K\"ahler metric in $\mathbb{C}^n$. If $\psi(z) = \widetilde{\psi}(|z_1|^2, \dots, |z_n|^2)$ is a multi-radial weight function such that $(\bar{\partial}\psi)^{\sharp}$ is holomorphic, then
	\begin{equation}\label{e:2.5}
	(\bar{\partial}\psi)^{\sharp}
	=
	\sum_{j=1}^{n} C_j z_j \frac{\partial}{\partial z_j},
	\end{equation}
	where $C_k$'s are real constant and 
	\begin{equation}
		\tilde\psi 
		=
		C_0
		+
		\sum_{j=1}^n C_j r_j \frac{\partial \tilde \chi}{\partial r_j}.
	\end{equation}
\end{theorem}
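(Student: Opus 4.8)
The plan is to rephrase the holomorphicity of $(\bar{\partial}\psi)^{\sharp}$ in coordinates and then exploit the toric symmetry forced by multi-radiality. Writing $r_j = |z_j|^2$ and using a subscript $j$ to denote the derivative $\partial/\partial r_j$, a direct computation from $\chi = \tilde\chi(r_1,\dots,r_n)$ gives the metric coefficients $h_{j\bar{k}} = \tilde\chi_j\,\delta_{jk} + \tilde\chi_{jk}\,z_k\bar{z}_j$, and likewise $\psi_{\bar{k}} = \tilde\psi_k\,z_k$. Hence the components of the field are $X^j := h^{j\bar{k}}\psi_{\bar{k}}$, and by the definition of the musical operator $(\bar{\partial}\psi)^{\sharp} = X^j\,\partial/\partial z^j$ is holomorphic precisely when each $X^j$ is a holomorphic function on the (Reinhardt) domain where $h$ is defined. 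These two coordinate computations are routine and provide the starting data.

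The key step is to pin down these holomorphic components. Since $\chi$ and $\psi$ are multi-radial, both $h$ and $\bar{\partial}\psi$ are equivariant under the torus action $z\mapsto(e^{i\theta_1}z_1,\dots,e^{i\theta_n}z_n)$; tracking the weights in the explicit formulas gives $h_{j\bar{k}}(e^{i\theta}z) = e^{i(\theta_k-\theta_j)}h_{j\bar{k}}(z)$ and $\psi_{\bar{k}}(e^{i\theta}z) = e^{i\theta_k}\psi_{\bar{k}}(z)$, whence $X^j(e^{i\theta}z) = e^{i\theta_j}X^j(z)$. I would then expand each holomorphic $X^j$ in its (Laurent) series on the Reinhardt domain: the equivariance forces every surviving monomial to carry exactly the weight $e^{i\theta_j}$, which leaves only the term proportional to $z_j$. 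Thus $X^j = C_j z_j$ for constants $C_j \in \mathbb{C}$, which is the asserted form of $(\bar{\partial}\psi)^{\sharp}$ in \cref{e:2.5}.

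It remains to identify $\tilde\psi$ and to verify that the $C_j$ are real. Substituting $X^j = C_j z_j$ into $\psi_{\bar{k}} = h_{j\bar{k}}X^j$ and using the explicit form of $h_{j\bar{k}}$ yields $\tilde\psi_k = \sum_j C_j\big(\delta_{jk}\tilde\chi_j + r_j\tilde\chi_{jk}\big)$. The right-hand side is exactly $\partial/\partial r_k$ applied to $\sum_j C_j r_j\tilde\chi_j$; since this identity holds for every $k$ on the connected $r$-domain, integrating gives $\tilde\psi = C_0 + \sum_j C_j r_j\tilde\chi_j$, the claimed formula.

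The subtle point, and the step I expect to be the main obstacle, is the reality of the $C_j$. Here $\tilde\psi_k$ and the coefficients $M_{jk} := \delta_{jk}\tilde\chi_j + r_j\tilde\chi_{jk}$ are real, so taking imaginary parts of $\tilde\psi_k = \sum_j C_j M_{jk}$ gives $\sum_j (\im C_j)\,M_{jk} = 0$ for all $k$. To force $\im C_j = 0$ I would invoke the positivity of the K\"ahler metric: evaluating the Hermitian form $\sum_{j,k} h_{j\bar{k}}v^j\overline{v^k}$ at $v^j = b_j/\bar{z}_j$ with $b_j$ real shows that the symmetric matrix $\big[\delta_{jk}\tilde\chi_j/r_j + \tilde\chi_{jk}\big]$ is positive definite, hence invertible; since this matrix equals $\mathrm{diag}(r_j)^{-1}M$, the matrix $M$ has trivial left kernel, and $\sum_j(\im C_j)M_{jk}=0$ gives $\im C_j = 0$. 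This yields real constants $C_j$ and completes the argument. The genuine inputs are thus the toric rigidity of holomorphic equivariant functions (giving the form $C_j z_j$) and the positivity of $h$ (giving reality of the constants), while the remaining manipulations are the two coordinate computations and a straightforward integration.
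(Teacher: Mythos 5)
Your proof is correct, but it reaches the key intermediate fact --- that $X^j := h^{j\bar{k}}\psi_{\bar{k}} = C_j z_j$ with $C_j$ \emph{real} --- by a genuinely different mechanism than the paper. The paper's central lemma is structural: it shows (by solving a linear system with real coefficients near the origin) that the transposed inverse metric has the form $h^{j\bar{k}} = (\partial\tilde\chi/\partial r_j)^{-1}\delta_{jk} + V_{jk} z_j\bar{z}_k$ with $V_{jk}$ real-valued (\cref{e:24}); consequently the coefficient of $z_j\,\partial/\partial z_j$ in $(\bar{\partial}\psi)^{\sharp}$ is manifestly a real-valued function of $(r_1,\dots,r_n)$, and a real-valued function $f$ such that $f\cdot z_j$ is holomorphic must be constant, so reality of the $C_j$ comes for free. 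You bypass the inverse metric entirely: the torus-weight argument (equivariance $X^j(e^{i\theta}z) = e^{i\theta_j}X^j(z)$ together with uniqueness of Taylor coefficients) pins down $X^j = C_j z_j$ with a priori complex $C_j$, and you then recover reality from positive-definiteness of $h$, via the correct observation that $M = \mathrm{diag}(r_j)\,S$ where $S_{jk} = \delta_{jk}\tilde\chi_j/r_j + \tilde\chi_{jk}$ is positive definite at points with all $z_j \ne 0$, so $M$ has trivial left kernel and $\mathrm{Im}\, C_j = 0$. Both routes conclude with the identical integration of $\tilde\psi_k = \partial_k\bigl(\sum_j C_j r_j \tilde\chi_j\bigr)$ over the connected $r$-domain. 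What your route buys: it is global on $\CC^n$ from the start (the paper's solvability argument for $V_{jk}$ is only justified near the origin, and strictly speaking needs the identity theorem to propagate $X^j = C_j z_j$ from a neighborhood of $0$ to all of $\CC^n$), and it replaces a matrix-inversion computation by a soft representation-theoretic observation. What the paper's route buys: reality of the $C_j$ is automatic, with no need for the extra positivity input, and the explicit real form of $h^{j\bar{k}}$ is reused elsewhere (e.g., in the decoupled examples following the theorem).
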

\begin{proof} By direct computation, we find that 
	\begin{equation}
	h_{j\kba}
	=
	\frac{\partial\tilde \chi}{\partial r_j} \delta_{jk} + \zba_j z_k \left(\frac{\partial^2 \tilde\chi}{\partial r_j \partial r_k}\right).
	\end{equation}
	Observe that $\partial \tilde\chi/\partial r_j$ and $\partial^2 \tilde \chi/\partial r_j \partial r_k$ are real-valued. Observe that $\partial\tilde\chi/\partial r_j > 0$ for all $j$ near the origin.
	
	We claim that  the inverse transpose matrix has the form
	\begin{equation}\label{e:24}
	h^{j\kba} = \left(\frac{\partial\tilde \chi}{\partial r_j}\right)^{-1} \delta_{jk} + V_{jk}z_j\zba_k
	\end{equation}
	for some matrix $V_{jk}$ with real-valued entries.  Indeed, consider the system of equations with unknowns $V_{jk}$
	\begin{equation}
		\left(\left(\frac{\partial\tilde \chi}{\partial r_j}\right)^{-1} \delta_{jk} + V_{jk}z_j\zba_k\right) h_{l\kba} = \delta^j_l
	\end{equation}
	which is equivalent to a system with real coefficients
	\begin{equation}
		\left(\frac{\partial\tilde \chi}{\partial r_j}\right)^{-1}\left(\frac{\partial^2 \tilde\chi}{\partial r_j \partial r_l} \right) +  \left(\frac{\partial\tilde \chi}{\partial r_j}\right)V_{jl} + \sum_{k=1}^n V_{jk} r_k \left(\frac{\partial^2 \tilde\chi}{\partial r_k \partial r_l} \right) = 0.
	\end{equation}
	For fixed $j$, the system of equation for $V_{jk}, k=1,2,\dots , n$ can be written as
	\begin{equation}
		\begin{pmatrix}
			a_j + r_1 b_{11} & r_2b_{21} & \cdots & r_n b_{n1}\\
			r_1 b_{12} & a_j + r_2 b_{22} & \cdots & r_n b_{n2} \\
			\vdots & \vdots & \ddots & \vdots \\
			r_1 b_{1n} & r_2b_{2n} & \cdots & a_j + r_n b_{nn}
		\end{pmatrix} 
		\cdot
		\begin{pmatrix}
			V_{j1} \\
			V_{j2} \\
			\vdots \\
			V_{jn}
		\end{pmatrix} 
		=
		\begin{pmatrix}
		- a_j^{-1} b_{j1}  \\
		- a_j^{-1} b_{j2}  \\
		\vdots \\
		- a_j^{-1} b_{jn} 
		\end{pmatrix},
	\end{equation}
	where
	\begin{equation}
		a_j = \frac{\partial\tilde \chi}{\partial r_j} > 0, \
		\quad
		b_{kl}
		=
		\frac{\partial^2 \tilde\chi}{\partial r_k \partial r_l},
	\end{equation}
	all are real-valued. Clearly, at the origin $r_1 = r_2 = \dots = r_n = 0$, the determinant of the coefficient matrix is $a_j^n >0$. Thus, this system of linear equations is uniquely solvable near the origin and the solution is real. The claim follows. 
	
	On the other hand, since $\psi$ is multi-radial, we have
	\begin{equation}
		\frac{\partial\psi}{\partial \zba_k}
		=
		\frac{\partial\tilde\psi}{\partial r_k} z_k.
	\end{equation}
	This and \cref{e:24} imply that
	\begin{equation}
	(\bar{\partial}\psi)^{\sharp}
	=
	\sum_{j=1}^{n}\left(\left(\frac{\partial\tilde \chi}{\partial r_j}\right)^{-1}\left(\frac{\partial \widetilde{\psi}}{\partial r_j}\right) + \sum_{k=1}^{n}r_k V_{jk}\left(\frac{\partial \widetilde{\psi}}{\partial r_k}\right)\right) z_j \frac{\partial}{\partial z_j}.
	\end{equation}
	Since for each $j$ the expression in the parenthesis is real-valued, it is holomorphic if and only if it is a constant. Thus
	\begin{equation}
		(\bar{\partial}\psi)^{\sharp} = \sum_{j=1}^n C_jz_j \frac{\partial}{\partial z_j},
	\end{equation}
	where $C_1, C_2,\dots , C_n$ are real constants.
	Thus, \cref{e:2.5} holds. Applying the flat ``musical'' operator $\flat$ to both sides, we find that $\tilde\psi$ must satisfy the PDE
	\begin{align}
	z_l\frac{\partial \tilde \psi}{\partial r_l}
		 = 
		\frac{\partial \psi}{\partial \zba_{l}} 
		= 
		\sum_{j=1}^n C_j z_j h_{j\bar{l}} 
		& = 
		z_l C_l \frac{\partial \tilde \chi}{\partial r_l } + z_l\sum_{j=1}^n C_j r_j \frac{\partial^2 \tilde\chi}{\partial r_j \partial r_l} \notag  \\
		& =
		z_l\frac{\partial}{\partial r_l}\left(\sum_{j=1}^n C_j r_j \frac{\partial \tilde \chi}{\partial r_j}\right)
	\end{align}
	whose general solution is
	\begin{equation}
	\tilde\psi 
	=
	C_0
	+
	\sum_{j=1}^n C_j r_j \frac{\partial \tilde \chi}{\partial r_j}.
	\end{equation}
	The proof is complete.
\end{proof}

For example, let 
$\tilde \chi$ have the following form
$$\tilde \chi (r_1,r_2, \dots, r_n)  = F_1(r_1)+F_2(r_2)+ \dots + F_n(r_n) ,$$
where $r_j=|z_j|^2, \, j=1, \dots ,n,$ 
with smooth real valued functions $F_j, \, j=1,\dots,n.$
Then we have a diagonal matrix 
$$h_{j \bar k} = \delta_{jk} (F_j'+r_jF_j'').$$
We have to suppose that all entries satisfy $F_j'+r_jF_j'' >0.$
For the determinant we get 
$$\delta = \prod_{j=1}^n (F_j'+r_jF_j'').$$
For $h^{j\bar k} $ we get
\begin{equation}\label{e:dc}
	(h^{j\bar k} )=  1/\delta
	\begin{pmatrix}  \prod_{j\neq1} (F_j'+r_jF_j'') & 0  & \dots & 0\\
	0 &  \prod_{j\neq 2} (F_j'+r_jF_j'') & \dots & 0\\
	\dots & \dots & \dots & \dots \\
	0 & 0 & \dots &  \prod_{j\neq n} (F_j'+r_jF_j'')
 \end{pmatrix}.
\end{equation}
For this metric, we can always find a weight function $\psi$ such that $(\bar{\partial} \psi)^{\sharp}$ is holomorphic. In fact, we can determine all such multi-radial weight functions $\psi$.
 
\begin{corollary}\label{prop:21}
	Let $h$ be a K\"ahler metric on $\CC^n$ with a potential function
	\begin{equation}
			\chi(z_1,z_2,\dots, z_n)
			=
			\sum_{j=1}^n F_j (|z_j|^2).
	\end{equation}
	If $\psi (z_1,\dots, z_n) =\tilde \psi (|z_1|^2, \dots, |z_n|^2)$ is a multi-radial weight, then $(\bar\partial \psi)^{\sharp}$ is holomorphic if and only if 
	\begin{equation}\label{e:dc1}
		\psi (z_1,\dots, z_n)= C_0+ \sum_{j=1}^n C_j |z_j|^2 F_j'(|z_j|^2) . 
	\end{equation}
	If this is the case, then we obtain the real holomorphic vector field
	\begin{equation}
		h^{j\bar k}  \frac{\partial \psi}{\partial \bar z^k} \frac{\partial}{\partial z^j}= \sum_{j=1}^n C_j z_j \frac{\partial}{\partial z^j}.
	\end{equation}
\end{corollary}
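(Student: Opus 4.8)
The plan is to treat this as the separable instance of \cref{thm:21}, where the diagonal form of the metric makes every step explicit and turns the one-directional statement of the theorem into a clean equivalence. First I would record that for the potential $\chi = \sum_j F_j(|z_j|^2)$ one has, with $r_j = |z_j|^2$, the diagonal metric $h_{j\kba} = \delta_{jk}\bigl(F_j' + r_j F_j''\bigr)$ and hence the diagonal inverse transpose $h^{j\kba} = \delta_{jk}\bigl(F_j' + r_j F_j''\bigr)^{-1}$ displayed in \cref{e:dc}. This is just the specialization of the general formula in the proof of \cref{thm:21} to $\partial\tilde\chi/\partial r_j = F_j'(r_j)$; in particular the off-diagonal terms $V_{jk}z_j\zba_k$ vanish, so there is no linear system to solve.

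Next I would compute $(\bar\partial\psi)^{\sharp}$ for a multi-radial weight. Since $\psi$ is multi-radial, $\partial\psi/\partial\bar z^k = (\partial\tilde\psi/\partial r_k)\,z_k$, and contracting with the diagonal $h^{j\kba}$ gives
$$(\bar\partial\psi)^{\sharp} = \sum_{j=1}^n \frac{1}{F_j' + r_j F_j''}\,\frac{\partial\tilde\psi}{\partial r_j}\, z_j\,\frac{\partial}{\partial z^j}.$$
The coefficient $g_j := (F_j' + r_j F_j'')^{-1}\,\partial\tilde\psi/\partial r_j$ is a real-valued multi-radial function, and here is the single substantive point: a field $\sum_j g_j(r)\,z_j\,\partial/\partial z^j$ with $g_j$ real-valued is holomorphic if and only if each $g_j$ is a constant. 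Indeed $\partial(g_j z_j)/\partial\bar z^k = (\partial g_j/\partial r_k)\,z_k z_j$, which vanishes for every $k$ exactly when $g_j$ is independent of all the $r_k$; being real-valued, such a constant is a real number $C_j$. This is precisely the holomorphicity criterion already used in the proof of \cref{thm:21}.

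Therefore $(\bar\partial\psi)^{\sharp}$ is holomorphic if and only if $\partial\tilde\psi/\partial r_j = C_j\bigl(F_j' + r_j F_j''\bigr)$ for real constants $C_1,\dots,C_n$, in which case the asserted vector field $\sum_j C_j z_j\,\partial/\partial z^j$ drops out immediately, matching \cref{e:2.5}. Finally I would integrate: recognizing $F_j' + r_j F_j'' = \tfrac{d}{dr_j}\bigl(r_j F_j'(r_j)\bigr)$, the condition reads $\partial\tilde\psi/\partial r_j = C_j\,\tfrac{d}{dr_j}\bigl(r_j F_j'\bigr)$. Because the metric is diagonal the $n$ equations decouple, one per variable, so the integration is a sequence of one-variable integrations and cross-variable consistency is automatic; this yields $\tilde\psi = C_0 + \sum_j C_j\, r_j F_j'(r_j)$, which is \cref{e:dc1}. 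Conversely, differentiating that expression recovers the required form of $\partial\tilde\psi/\partial r_j$, giving the reverse implication.

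I expect no genuine obstacle, since the content is already contained in \cref{thm:21}; the only care needed is the holomorphicity criterion of the second paragraph together with the observation that diagonality decouples the general multi-radial PDE into $n$ independent one-variable integrations, which is what upgrades the necessity in \cref{thm:21} to the stated equivalence and produces the explicit closed form.
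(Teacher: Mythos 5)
Your proposal is correct and follows essentially the same route as the paper's own proof: specialize to the diagonal inverse metric \cref{e:dc}, observe that $(\bar\partial\psi)^{\sharp}=\sum_j \bigl(\tilde\psi_{r_j}/(F_j'+r_jF_j'')\bigr)z_j\,\partial/\partial z^j$ is holomorphic precisely when each real-valued coefficient is a constant $C_j$, and integrate the decoupled equations $\tilde\psi_{r_j}=C_j\,\tfrac{d}{dr_j}(r_jF_j')$ to obtain \cref{e:dc1}. The only difference is that you spell out the details the paper leaves implicit (the verification that a real-valued radial coefficient must be constant, and the integration with its cross-variable consistency), which is a matter of exposition rather than of approach.
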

\begin{proof}
	Using \cref{e:dc}, we find that
	\begin{equation}
		h^{ j \bar k} \psi_{\bar k}
		=
		\sum_{j=1}^n z_j\frac{\tilde{\psi}_{r_j}(r_1,r_2,\dots , r_n)}{F'_j + r_j F''_j}.
	\end{equation}
	Then $(\bar\partial \psi)^{\sharp}$ is holomorphic if and only if 
	\begin{equation}
		\frac{\tilde{\psi}_{r_j}(r_1,r_2,\dots , r_n)}{F'_j + r_j F''_j} = C_j
	\end{equation}
	for some real constant $C_j$. This PDE can be solved easily and the solutions are given as in \cref{e:dc1}. The proof is complete.
\end{proof}
\begin{example}
	We consider the polydisk $\pol := \left\{z \in \CC^n \colon |z_j|^2 < 1, j=1,2,\dots, n\right\}$. 
	The Bergman metric on $\pol$ is the K\"ahler metric with potential function
	\begin{equation}
		\chi(z) = \log K(z,z) = -2\sum_{j=1}^n \log (1-|z_j|^2),
	\end{equation}
	which is decoupled and multi-radial. Applying \cref{prop:21} with $F_j(r) = -\log (1-r)$, we see that all multi-radial weight functions $\psi$ with $(\bar\partial \psi)^{\sharp}$ holomorphic are of the form
	\begin{equation}
		\psi = \gamma_0 + \sum_{j=1}^n \frac{ \gamma_j}{1-|z_j|^2}.
	\end{equation}
	Under a suitable condition on $\gamma_j$, the $\partial$-complex on the Bergman spaces $A^2(\pol, h, e^{-\psi})$ is similar to that on the Bergman spaces on the unit ball with complex hyperbolic metric, studied earlier in \cite{hasson}.
\end{example}
Another interesting decoupled multi-radial potential function is given in the form
\begin{equation}
	\tilde \chi (r_1,r_2,\dots, r_n) = \prod_{j=1}^{n} G_j(r_j),
\end{equation}
where $G_j(r)$'s are real-valued function of a real variable. We have 
\begin{align}
	h_{j\jba} 
	& = \partial_j \partial_{\jba} \chi(z_1,z_2,\dots, z_n) \notag \\
	& =
	G_1\cdots G_{j-1}(G_j' + r_j G_j'') G_{j+1} \cdots G_n 
	=
	\chi \frac{G_j' + r_j G_j''}{G_j}
\end{align}
and for $k\ne j$, we have
\begin{equation}
	h_{j\kba} = \partial_j \partial_{\kba} \chi(z_1,z_2,\dots, z_n)
	=
	\chi \frac{G_j'\zba_j G_k' z_k}{G_j G_k}. 
\end{equation}
Thus, the K\"ahler metric is given by a rank-1 perturbation of a diagonal metric. Precisely,
\begin{equation}
	h_{j\kba}
	=
	\chi \left(\frac{G_jG_j' + r_j G_jG_j'' - (G_j')^2 r_j}{G_j^2} \delta_{jk} + \frac{G_j'\zba_j G_k' z_k}{G_j G_k}\right).
\end{equation}
\cref{thm:21} gives the following:
\begin{corollary}
	Let $h$ be a K\"ahler metric on $\CC^n$ with a potential function
	\begin{equation}
		\chi(z_1,\dots , z_n) = \prod_{j=1}^n G_j(|z_j|^2).
	\end{equation}
	Then a multi-radial weight function $\psi(z_1,\dots , z_n) = \tilde{\psi}(|z_1|^2,\dots, |z_n|^2)$ such that $(\bar\partial \psi)^{\sharp}$ is holomorphic if and only if
	\begin{equation}
		\psi(z) = C_0 + \sum_{j=1}^n C_j |z_j|^2 G'_j(|z_j|^2) \prod_{k\ne j} G_k(|z_k|^2),
	\end{equation}
	where $C_0, C_1,\dots, C_n$ are real constants, and $G'_j = \partial G_j/\partial r_j$. In this case, the holomorphic vector field is
		\begin{equation}
	h^{j\bar k}  \frac{\partial \psi}{\partial \bar z^k} \frac{\partial}{\partial z^j}= \sum_{j=1}^n C_j z_j \frac{\partial}{\partial z^j}.
	\end{equation}
\end{corollary}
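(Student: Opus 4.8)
The plan is to apply \cref{thm:21} directly, since the product potential $\tilde\chi(r_1,\dots,r_n)=\prod_{j=1}^n G_j(r_j)$ is manifestly multi-radial and, by hypothesis, is a K\"ahler potential (so the metric $h_{j\kba}$ is positive definite and the inverse with the structure used in \cref{thm:21} exists). The only genuine computation is the first-order partial derivatives of $\tilde\chi$; once these are in hand, the general solution formula of \cref{thm:21} produces the stated $\psi$, and the holomorphic field is read off from \cref{e:2.5}.

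First I would compute, by the product rule,
\begin{equation*}
	\frac{\partial\tilde\chi}{\partial r_j}=G_j'(r_j)\prod_{k\ne j}G_k(r_k),
	\qquad\text{so that}\qquad
	r_j\frac{\partial\tilde\chi}{\partial r_j}=r_jG_j'(r_j)\prod_{k\ne j}G_k(r_k).
\end{equation*}
Substituting into the general solution $\tilde\psi=C_0+\sum_{j=1}^n C_j\,r_j\,\partial\tilde\chi/\partial r_j$ furnished by \cref{thm:21}, and replacing $r_j$ by $|z_j|^2$, yields exactly the claimed expression for $\psi$; the associated real holomorphic vector field $\sum_{j=1}^n C_j z_j\,\partial/\partial z^j$ is precisely \cref{e:2.5}. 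This gives the ``only if'' direction together with the formula for the field.

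For the ``if'' direction I would substitute the displayed $\psi$ back and check holomorphicity of $(\bar\partial\psi)^\sharp$ directly. Because $\psi$ is multi-radial one has $\psi_{\kba}=\tilde\psi_{r_k}z_k$, and the rank-one-perturbation form of $h_{j\kba}$ recorded just before the corollary lets one verify that the raised field collapses to $\sum_{j}C_jz_j\,\partial/\partial z^j$; equivalently, for this $\psi$ the coefficient functions appearing in the proof of \cref{thm:21} reduce to the constants $C_j$, which is exactly the holomorphicity criterion established there.

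I do not anticipate a real obstacle, as the statement is a direct specialization of \cref{thm:21}. The only points needing care are the bookkeeping in the product rule—ensuring the factor $\prod_{k\ne j}G_k$ is attached to the correct summand—and the tacit use of positive definiteness of $h_{j\kba}$ (part of the K\"ahler-potential hypothesis) so that the inverse metric exists on the region where the argument runs. Both are routine.
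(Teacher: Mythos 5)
Your proposal is correct and matches the paper's own treatment: the paper gives no separate argument for this corollary but simply states that \cref{thm:21} yields it, which is exactly your route of computing $\partial\tilde\chi/\partial r_j = G_j'(r_j)\prod_{k\ne j}G_k(r_k)$ by the product rule and substituting into the general solution $\tilde\psi = C_0 + \sum_j C_j r_j\,\partial\tilde\chi/\partial r_j$, with the vector field read off from \cref{e:2.5}. Your additional remark that the ``if'' direction follows because the computations in the proof of \cref{thm:21} are reversible (the coefficient functions reduce to the constants $C_j$) is a fair observation and consistent with how the paper's theorem is established.
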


In the rest of this section, we study in detail an example of K\"ahler metric given by a  multi-radial non-decoupled function, yet the weight function can be chosen so that the adjoint $\partial^{\ast}$-operator  ``forgets'' one variable.
\begin{example}
In the following we consider a non-decoupled example on $\mathbb C^2$ with potential function
\begin{equation}
	\chi (z_1,z_2) = \frac{1}{4}|z_1|^4 + |z_1|^2 |z_2|^2 + |z_1|^2 + |z_2|^2.
\end{equation}
In the standard coordinates of $\CC^2$, the metric is given by the matrix
\begin{equation}
	\left[h_{j\kba}\right]
	=
	\begin{pmatrix}
		|z_1|^2+ |z_2|^2 +1  & \zba_1 z_2 \\
		z_1 \zba_2 & |z_1|^2 +1
	\end{pmatrix},
\end{equation}
with the determinant is 
\begin{equation}
	\delta = \det \left[h_{j\kba}\right] =  |z_1|^4+ 2 |z_1|^2+ |z_2|^2 +1 .
\end{equation}
Therefore,
\begin{equation}
	\left[h^{j\kba}\right]
	=
	\frac{1}{\delta}\begin{pmatrix}
		|z_1|^2 +1 & - z_1 \zba_2 \\
		-\zba_1 z_2 & |z_1|^2 +|z_2|^2+1
	\end{pmatrix}.
\end{equation}

If $\psi(z_1,z_2) = \tilde{\psi}(r_1,r_2)$ is a multi-radial weight with real holomorphic gradient field, then \cref{thm:21} shows that
\begin{equation}
	(\bar\partial\psi)^{\sharp} = C_1 z_1 \frac{\partial}{\partial z_1} + C_2 z_2 \frac{\partial}{\partial z_2},
\end{equation}
where $C_1$ and $C_2$ are two constants, and
\begin{equation}
	\psi(z_1,z_2)
	=
	C_1|z_1|^2 + C_2|z_2|^2 + (C_1+C_2) |z_1z_2|^2 + \frac{1}{2}C_1 |z_1|^4.
\end{equation}

We consider the case $C_1 = 1$ and $C_2=0$ so that 
\begin{equation}
	\psi (z_1,z_2) = \frac{|z_1|^4}{2} + |z_1|^2 |z_2|^2 + |z_1|^2
\end{equation}
and the corresponding Bergman spaces
\begin{equation}
	A_{(0,0)}^2 ( \mathbb C^2, h, e^{-\psi}) =\left\{ f: \mathbb C^2 \longrightarrow \mathbb C \ {\text{entire}} : \int_{\mathbb C^2} |f|^2 e^{-\psi} \delta \, d\lambda < \infty \right\}
\end{equation}
and 
\begin{equation}
	A_{(1,0)}^2 ( \mathbb C^2, h, e^{-\psi}) = \left\{ u=u_1\, dz_1+u_2\, dz_2 ,  u_1,u_2\ {\text{entire}} : \int_{\mathbb C^2} |u|_h^2 e^{-\psi} \delta \, d\lambda < \infty \right\},
\end{equation}
where $|u|^2_h = h^{j \overline k} u_j u_{\overline k}.$ It is easily seen that these spaces are non-trivial.

We claim that $A_{(0,0)}^2 ( \mathbb C^2, h, e^{-\psi}) $ does not contain monomials in $z_2:$
consider the function $f(z_1,z_2) =z_2^m,$ for $m\in \mathbb N.$ Using polar coordinates we get 
\begin{eqnarray} 
	\| f\|^2 &=& 4 \pi^2  \int_0^\infty \int_0^\infty r_2^{2m} (r_1^4 +2r_1^2 + r_2^2 +1)\, e^{-r_1^4/2 - r_1^2r_2^2 -r_1^2} r_1 r_2 \ dr_1 dr_2 \notag \\
	&=& 4 \pi^2  \int_0^\infty  \left ( \int_0^\infty r_2^{2m+3} e^{-r_1^2r_2^2} \, dr_2 \right ) 
	(r_1^5 +2r_1^3 + r_1)e^{-r_1^4/2 - r_1^2} \, dr_1,
\end{eqnarray} 
for the inner integral we substitute $s=r_1^2 r_2^2$ and get
\begin{equation}
	\frac{1}{2 r_1^{2m+4}} \, \int_0^\infty s^{m+1} \, e^{-s} \, ds,
\end{equation}
which shows that integration with respect to $r_1$ is divergent and hence the claim follows.

In a similar way one shows that all functions $z_1^k z_2^\ell,$ for $k \in \mathbb N, k\ge 2$ and $ \ell \in \mathbb Z, 0 \le \ell \le k-2$ belong to
$A_{(0,0)}^2 ( \mathbb C^2, h, e^{-\psi}) .$ They even belong to ${\text{dom}}(\partial ).$ Here one has to take care for the slightly different norm in $A_{(1,0)}^2 ( \mathbb C^2, h, e^{-\psi}) :$
We have to consider the integral
\begin{equation*} 
  \int_0^\infty \int_0^\infty r_1^{2k} r_2^{2\ell} (r_1^4 +2r_1^2 + r_2^2 +1)\, e^{-r_1^4/2 - r_1^2r_2^2 -r_1^2} r_1 r_2 \ dr_1 dr_2;
\end{equation*}
the critical summand is
\begin{equation*} 
  \int_0^\infty \int_0^\infty r_1^{2k} r_2^{2\ell+2} \, e^{-r_1^4/2 - r_1^2r_2^2 -r_1^2} r_1 r_2 \ dr_1 dr_2;
\end{equation*}
integration with respect to $r_2$ gives
\begin{eqnarray*} 
  \int_0^\infty r_1^{2k+1} r_2^{2\ell+2} \, e^{ - r_1^2r_2^2 } r_2 dr_2 &=&
 \frac{1}{2}   \int_0^\infty r_1^{2k-1} r_1^{-2\ell -2} s^{\ell +1} e^{-s}\, ds\\ 
&=&  \frac{1}{2}   \int_0^\infty r_1^{2k-2\ell -3} s^{\ell +1} e^{-s}\, ds,
\end{eqnarray*}
and we observe that $2k-2\ell -3 \ge 0,$ whenever $\ell \le k-2.$

In order to show that the functions  $z_1^k z_2^\ell,$ for $k \in \mathbb N, k\ge 2$ and $ \ell \in \mathbb Z, 0 \le \ell \le k-2$ belong to ${\text{dom}}(\partial ),$ we first have to consider
\begin{equation}
	\partial (z_1^k z_2^\ell) = k z_1^{k-1} z_2^\ell \, dz_1  +  \ell z_1^k z_2^{\ell -1} \, dz_2,
\end{equation}
now we compute 
\begin{eqnarray} 
	| \partial (z_1^k z_2^\ell) |_h^2 &=& \frac{|z_1|^2 +1}{\delta} | k z_1^{k-1} z_2^\ell |^2
	-\frac{z_1 \overline z_2}{\delta} k z_1^{k-1} z_2^\ell \,  \overline{(\ell z_1^k z_2^{\ell-1})} \notag \\ 
	&-&\frac{\overline z_1z_2}{\delta}\overline{( k z_1^{k-1} z_2^\ell)} \,  (\ell z_1^k z_2^{\ell-1})
	+ \frac{|z_1|^2 +|z_2|^2+1}{\delta} |\ell z_1^k z_2^{\ell-1}|^2
\end{eqnarray}
and observe that in the first term the exponent for $r_1$ after integration with respect to $r_2$ is again $2k-2\ell -3$ and in the  last term we have the right exponents for $z_1$ and $z_2,$ namely
$| z_1|^{2k} |z_2|^{2\ell}.$ Hence the functions $z_1^k z_2^\ell,$ for $k \in \mathbb N, k\ge 2$ and $ \ell \in \mathbb Z, 0 \le \ell \le k-2$ belong to ${\text{dom}}(\partial ).$

It is clear that $ \left\{ z_1^k z_2^\ell \  : k \in \mathbb N, k\ge 2, \  \ell \in \mathbb Z, 0 \le \ell \le k-2\right\}$ is an orthogonal system in 
$A_{(0,0)}^2 ( \mathbb C^2, h, e^{-\psi}) .$ 

Let $f\in A_{(0,0)}^2 ( \mathbb C^2, h, e^{-\psi}).$ Then $f$ can be written as its Taylor  series
\begin{equation}
	f(z_1,z_2) = \sum_{\alpha , \beta } c_{\alpha, \beta} z_1^\alpha z_2^\beta,
\end{equation}
which is uniformly convergent on compact subsets of $\mathbb C^2.$ Hence, using polar coordinates we get
\begin{equation}
	\frac{1}{4\pi^2} \int_0^{2\pi} \int_0^{2\pi} f(r_1 e^{i\phi_1}, r_2 e^{i\phi_2}) \, e^{-i\alpha \phi_1} e^{-i\beta \phi_2}  \, d\phi_1 d\phi_2= c_{\alpha, \beta}\,  r_1^\alpha r_2^\beta,
\end{equation}
and by Parseval's formula
\begin{equation}
	\int_0^{2\pi} \int_0^{2\pi} |f(r_1 e^{i\phi_1}, r_2 e^{i\phi_2})|^2 \, d\phi_1 d\phi_2= 4 \pi^2
	\sum_{\alpha , \beta } |c_{\alpha, \beta}|^2  r_1^{2\alpha} r_2^{2\beta}.
\end{equation}
Computing the norm of $f$ in  $A_{(0,0)}^2 ( \mathbb C^2, h, e^{-\psi}),$ we see that
\begin{equation}
	\|f\|^2 = 4 \pi^2 \int_0^\infty \int_0^\infty  \sum_{\alpha , \beta } |c_{\alpha, \beta}|^2  r_1^{2\alpha} r_2^{2\beta} (r_1^4 +2r_1^2 + r_2^2 +1)\, e^{-r_1^4/2 - r_1^2r_2^2 -r_1^2} r_1 r_2 \ dr_1 dr_2,
\end{equation}
and Lebesgue's dominated convergence theorem implies that we can interchange integration and summation, so we have
\begin{equation}
	\|f\|^2= 4 \pi^2 \sum_{\alpha , \beta } \int_0^\infty \int_0^\infty |c_{\alpha, \beta}|^2  r_1^{2\alpha} r_2^{2\beta} (r_1^4 +2r_1^2 + r_2^2 +1)\, e^{-r_1^4/2 - r_1^2r_2^2 -r_1^2} r_1 r_2 \ dr_1 dr_2.
\end{equation}
This implies that  the system $ \{ z_1^k z_2^\ell \  : k \in \mathbb N, k\ge 2, \  \ell \in \mathbb Z, 0 \le \ell \le k-2 \}$ is an orthogonal basis of $A_{(0,0)}^2 ( \mathbb C^2, h, e^{-\psi}),$ as all other functions $z_1^k z_2^\ell $ do not belong to $A_{(0,0)}^2 ( \mathbb C^2, h, e^{-\psi}) .$ In addition we have that the operator $\partial$ is densely defined.

Since $(\bar\partial\psi) ^{\sharp} = z_1\partial/\partial z_1$, we have for $u=u_1\, dz_1+u_2\, dz_2 \in {\text{dom}} (\partial^*)$
\begin{equation}
	\partial^*u = z_1u_1.
\end{equation}
Thus, the adjoint $\partial^{\ast}$ ``forgets'' $z_2$-variable, although the weight and the metric both depend on $z_2$.

Now let $u=u_1dz_1+u_2dz_2 \in  A_{(1,0)}^2 ( \mathbb C^2, h, e^{-\psi}).$ Then
\begin{equation}
	|\partial u|^2_h = \left | \frac{\partial u_2}{\partial z_1}-\frac{\partial u_1}{\partial z_2} \right |^2 \, \frac{1}{\delta},
\end{equation}
therefore 
\begin{equation}
	\partial :  A_{(1,0)}^2 ( \mathbb C^2, h, e^{-\psi}) \longrightarrow  A_{(2,0)}^2 ( \mathbb C^2, h, e^{-\psi})
\end{equation}
is also densely defined.

Let 
\begin{equation}
	v=v_{12} \,dz_1 \wedge dz_2 \in A_{(2,0)}^2 ( \mathbb C^2, h, e^{-\psi}).
\end{equation}
Then, by the same computation as above, we get  
\begin{equation}
	\partial^*v = P_{h,\psi}(-\psi_{\overline j} v_{12} h^{2\overline j})dz_1 +
	P_{h,\psi}(-\psi_{\overline j} v_{21} h^{1\overline j})dz_2 = z_1v_{12}\, dz_2.
\end{equation}
So we obtain for $\tilde \Box = \partial^* \partial + \partial \partial^*$ and $u\in A_{(1,0)}^2 ( \mathbb C^2, h, e^{-\psi}) \cap {\text{dom}}(\tilde \Box) $ that
\begin{equation}
	\tilde \Box u = \left (u_1 + z_1\frac{\partial u_1}{\partial z_1}\right )\, dz_1 
	+z_1 \frac{\partial u_2}{\partial z_1} \, dz_2.
\end{equation}
\begin{proposition}\label{nonradcomp}
The operator
 \begin{equation}
 	\tilde \Box :  A_{(1,0)}^2 ( \mathbb C^2, h, e^{-\psi})\longrightarrow  A_{(1,0)}^2 ( \mathbb C^2, h, e^{-\psi})
\end{equation}
is densely defined and its spectrum consists of point eigenvalues with finite multiplicities. Precisely, for $k=1,2,\dots$, the eigenvalues are $\lambda_k = k+1$, with multiplicity $2k-1$.
\end{proposition}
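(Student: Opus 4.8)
The plan is to use the explicit formula for $\boxop$ derived just above, which shows that on $u = u_1\,dz_1 + u_2\,dz_2$ the operator decouples into two scalar operators acting separately on the coefficients: $\boxop u = (A_1 u_1)\,dz_1 + (A_2 u_2)\,dz_2$, with $A_1 = 1 + z_1\partial_{z_1}$ and $A_2 = z_1\partial_{z_1}$. Each $A_i$ is, up to the additive constant, the Euler operator in $z_1$, so monomials diagonalize it: $A_1(z_1^k z_2^\ell) = (k+1)\,z_1^k z_2^\ell$ and $A_2(z_1^a z_2^b) = a\,z_1^a z_2^b$. First I would record that consequently every eigenform is a combination of monomial forms $z_1^k z_2^\ell\,dz_1$, of eigenvalue $k+1$, and $z_1^a z_2^b\,dz_2$, of eigenvalue $a$; in particular the eigenvalue $m$ can only arise from $dz_1$-forms with $k = m-1$ and from $dz_2$-forms with $a = m$.

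The decisive step is to decide which of these monomial forms actually belong to $A^2_{(1,0)}(\CC^2,h,e^{-\psi})$, since this is what fixes the multiplicities. The crucial point is that in the form norm $\int |u|_h^2\,e^{-\psi}\,\delta\,d\lambda$ the volume factor $\delta$ cancels against the entries of $[h^{j\bar k}]$: for a pure $dz_1$-form the effective weight is $(|z_1|^2+1)e^{-\psi}$, and for a pure $dz_2$-form it is $(|z_1|^2+|z_2|^2+1)e^{-\psi}$ (this is precisely why the $(1,0)$-form spaces differ from the scalar space $A^2_{(0,0)}$, where $\delta$ does not cancel). Carrying out the $z_2$-integral first via the substitution $s = |z_1|^2|z_2|^2$ — exactly as in the convergence computations for $A^2_{(0,0)}$ above — and then examining the $z_1$-integral near the origin, I would obtain the admissibility conditions $\ell\leqslant k-1$ for $z_1^k z_2^\ell\,dz_1$ and $b\leqslant a-2$ for $z_1^a z_2^b\,dz_2$. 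Thus the eigenvalue $m$ is carried by the $dz_1$-forms with $0\leqslant\ell\leqslant m-2$ and by the $dz_2$-forms with $0\leqslant b\leqslant m-2$.

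To finish, I would establish completeness and read off the spectral statement. Writing a general $u\in A^2_{(1,0)}$ through the Taylor expansions of $u_1,u_2$ and applying Parseval in the two angular variables — the same mechanism used for the orthogonal basis of $A^2_{(0,0)}$ above — the norm decomposes over angular sectors, the coefficients of all inadmissible monomials are forced to vanish, and the admissible ones interact only inside finite-dimensional sectors (the cross term in the norm couples $z_1^k z_2^\ell\,dz_1$ with $z_1^{k+1}z_2^{\ell-1}\,dz_2$). This gives completeness of the monomial eigenforms, so $\boxop$ has pure point spectrum with eigenvalues $\lambda = m = 2,3,\dots$, each of finite multiplicity; since $\lambda\to\infty$, the inverse $\widetilde{N}_1$ is compact with discrete spectrum.

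The step I expect to be the main obstacle is the exact multiplicity count, which is tied to the domain question. One has to verify that the borderline monomials saturating $\ell = k-1$ and $b = a-2$ genuinely lie in $\dom(\boxop)$ — i.e. that $\partial$ and $\partial^*$ send them back into the relevant $L^2$-spaces — before they may be counted as eigenforms of the self-adjoint realization. Counting the admissible monomials as above produces $(m-1)$ eigenforms from the $dz_1$-sector and $(m-1)$ from the $dz_2$-sector; matching this total against the asserted multiplicity for $\lambda_k = k+1$ is the delicate boundary bookkeeping, and it is here — in checking whether a borderline eigenform must be discarded from the domain or is absorbed into the $\partial$-closed/$\partial^*$-closed splitting of $\boxop = \partial\partial^* + \partial^*\partial$ — that I would concentrate the most care.
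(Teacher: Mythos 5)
Your proposal follows the same skeleton as the paper's own proof: both diagonalize $\boxop$ on monomial forms via the explicit formula $\boxop u=(u_1+z_1\partial_{z_1}u_1)\,dz_1+z_1\partial_{z_1}u_2\,dz_2$, and both obtain completeness from the angular (Parseval) decomposition. The point of divergence is exactly the admissibility analysis, and there \emph{your} computation is the correct one. The paper's proof rests on the claim that the monomials $z_1^kz_2^\ell$ with $k\geqslant 2$, $0\leqslant\ell\leqslant k-2$ ``constitute an orthogonal basis in the components of $A_{(1,0)}^2(\mathbb{C}^2,h,e^{-\psi})$,'' i.e.\ it imposes the scalar-space condition on both components of a $(1,0)$-form. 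As you observe, the cancellation of $\delta$ makes the effective weight for a pure $dz_1$-form equal to $(|z_1|^2+1)e^{-\psi}$, and convergence then holds precisely when $\ell\leqslant k-1$; so the borderline forms $z_1^kz_2^{k-1}\,dz_1$, in particular $z_1\,dz_1$, do belong to $A^2_{(1,0)}(\mathbb{C}^2,h,e^{-\psi})$.

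The step you deferred --- whether these borderline forms must be discarded from $\dom(\boxop)$ --- resolves in their favor, so your count of $(m-1)+(m-1)$ eigenforms for the eigenvalue $m=k+1$, i.e.\ multiplicity $2k$, is the true one, and the multiplicity $2k-1$ asserted in the Proposition is an error inherited from the basis claim above. Concretely, take $u=z_1\,dz_1$, so $\partial u=0$. For $v\in\dom(\partial)\subset A^2_{(0,0)}$, expand $v$ in its Taylor series and integrate term by term (legitimate, since the pairing converges absolutely and the series converges uniformly on polydiscs); angular orthogonality then shows that the required identity $\langle\partial v,u\rangle=\langle v,z_1^2\rangle$ need only be checked for $v=z_1^2$, where it reads $2\|z_1\,dz_1\|^2=\|z_1^2\|^2$. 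Writing $x=|z_1|^2$, $y=|z_2|^2$ and integrating out $y$ first, the two sides reduce (up to the same constant) to $2\int_0^\infty(x+1)e^{-x^2/2-x}\,dx$ and $\int_0^\infty\bigl[x(x+1)^2+1\bigr]e^{-x^2/2-x}\,dx$, which are equal by the integration by parts
\begin{equation*}
\int_0^\infty x(x+1)^2e^{-x^2/2-x}\,dx=\int_0^\infty(2x+1)e^{-x^2/2-x}\,dx ,
\end{equation*}
using $\frac{d}{dx}e^{-x^2/2-x}=-(x+1)e^{-x^2/2-x}$. Hence $u\in\dom(\partial^{\ast})$ with $\partial^{\ast}u=z_1^2$ (one of the paper's basis monomials), $\partial^{\ast}u\in\dom(\partial)$, and $\boxop u=\partial(z_1^2)=2u$. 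Together with the orthogonal eigenform $z_1^2\,dz_2$, the eigenvalue $\lambda_1=2$ therefore has multiplicity at least $2$, contradicting the claimed multiplicity $2\cdot 1-1=1$. The same verification works for every borderline form $z_1^{m-1}z_2^{m-2}\,dz_1$: the only monomial pairing nontrivially with it is $v=z_1^{m}z_2^{m-2}$, which lies in $A^2_{(0,0)}$ since $m-2\leqslant m-2$.

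So the only thing missing from your proposal is the resolution of the bookkeeping you flagged, and carrying it out overturns, rather than confirms, the precise multiplicity in the statement: the qualitative conclusions (densely defined, purely discrete spectrum, finite multiplicities, eigenvalues $\lambda_k=k+1\to\infty$, compact inverse) all stand, but the multiplicity of $\lambda_k$ is $2k$, realized by the $k$ eigenforms $z_1^kz_2^\ell\,dz_1$, $0\leqslant\ell\leqslant k-1$, together with the $k$ eigenforms $z_1^{k+1}z_2^b\,dz_2$, $0\leqslant b\leqslant k-1$.
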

\begin{proof}
In order to determine the eigenvalues of  $\tilde \Box,$ we consider the basis elements
$z_1^k z_2^\ell,$ for $k \in \mathbb N, k\ge 2$ and $ \ell \in \mathbb Z, 0 \le \ell \le k-2$ and define
\begin{equation}
	v_{k,\ell}^1 = z_1^k z_2^\ell \, dz_1 \ {\text{and}} \  v_{k,\ell}^2 = z_1^k z_2^\ell \, dz_2.
\end{equation}
Then we have 
\begin{equation}
	\tilde \Box v_{k,\ell}^1 = (k+1) v_{k,\ell}^1 \ {\text{and}} \ \tilde \Box v_{k,\ell}^2 = k v_{k,\ell}^2.
\end{equation}
Since $ \ell \in \mathbb Z, 0 \le \ell \le k-2,$ the eigenvalues $k$ and $k+1$ are of finite multiplicity and as the functions $z_1^k z_2^\ell,$ for $k \in \mathbb N, k\ge 2$ and $ \ell \in \mathbb Z, 0 \le \ell \le k-2$ constitute an orthogonal basis in the components of $A_{(1,0)}^2 ( \mathbb C^2, h, e^{-\psi})$ the operator $\tilde \Box $ has  a compact resolvent.
\end{proof}
\end{example}
\section{Conformally K\"ahler metrics}
Let $(M,h)$ be a K\"ahler manifold and let $g = \phi^{-1} h$ be a conformal metric. In this section, we study the question when $g$ has holomorphic torsion. Our motivation comes from \cref{thm:1.1a} which says essentially that if $g$ has holomorphic torsion and if $\psi$ is a weight function such that $(\bar{\partial} \psi)^{\sharp}$ is holomorphic, then the $\partial$-complex on the Bergman spaces $A^2_{(p,0)}(M,h,e^{-\psi})$ exhibits an interesting holomorphicity/duality property, provided that some additional density conditions hold; see also \cite{hasson}. We first consider the case when $(M,h)$ is a complex space form of constant (negative, zero, or positive) curvature. Using a result in \cite{grossqian}, we determine all conformal metrics with holomorphic torsion. We further determine the real-valued function whose gradient with respect to the conformal metrics are real holomorphic. These results provide several interesting examples in which the $\partial$-complex has the aforementioned holomorphicity property.

\begin{proposition}\label{prop:31}
	Let $(M,h)$ be a K\"ahler manifold of dimension $n\geqslant 2$ and let $g = \phi^{-1} h$ be a conformally K\"ahler metric. Let $\tau^g$ be the torsion form of $g$ and $\sharp_g$ the sharp ``musical'' operator associated to $g$. Then the following are equivalent.
	\begin{enumerate}[(i)]
		\item $g$ has holomorphic torsion,
		\item $\left(\overline{\tau}^{g}\right)^{\sharp_g}$ is holomorphic,
		\item $(\bar{\partial} \phi )^{\sharp}$ is holomorphic.
	\end{enumerate}
\end{proposition}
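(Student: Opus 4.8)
The plan is to reduce all three conditions to the single statement that the $h$-vector field $(\bar\partial\phi)^\sharp$ is holomorphic, by computing the torsion of the conformal metric $g=\phi^{-1}h$ explicitly. Write $\phi_j=\partial\phi/\partial z^j$ and recall that $\phi$ is real. Since $g^{i\bar l}=\phi\,h^{i\bar l}$, the Chern Christoffel symbols of $g$ are
\[
\Gamma^i_{jk}(g)=\Gamma^i_{jk}(h)-\phi^{-1}\phi_j\,\delta^i_k,
\]
and because $h$ is K\"ahler its symbols are symmetric in $j,k$, so the torsion of $g$ is
\[
T^i_{jk}(g)=\phi^{-1}\bigl(\phi_k\,\delta^i_j-\phi_j\,\delta^i_k\bigr).
\]
Everything below is an exercise in contracting this expression.

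For the equivalence (ii)$\Leftrightarrow$(iii) I would first take the trace to obtain the torsion form $\tau^g=-(n-1)\phi^{-1}\partial\phi$, then apply $\sharp_g$ to its conjugate. Using $g^{k\bar j}=\phi\,h^{k\bar j}$ the conformal factors cancel and one finds
\[
\bigl(\overline{\tau}^{\,g}\bigr)^{\sharp_g}=-(n-1)\,(\bar\partial\phi)^\sharp .
\]
As $n\geqslant 2$ the scalar $-(n-1)$ is nonzero, so $(\overline{\tau}^{\,g})^{\sharp_g}$ is holomorphic exactly when $(\bar\partial\phi)^\sharp$ is; this is precisely (ii)$\Leftrightarrow$(iii), and it is where the hypothesis $n\geqslant2$ enters.

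For (i)$\Leftrightarrow$(iii) I would raise and lower indices as in \cref{e:torsion}, now with $g$ in place of $h$, i.e. $T_q{}^{pr}(g)=\overline{T^i_{jk}(g)}\,g_{q\bar i}g^{p\bar j}g^{r\bar k}$. Substituting $g_{q\bar i}=\phi^{-1}h_{q\bar i}$ and $g^{p\bar j}=\phi\,h^{p\bar j}$, the four factors of $\phi^{\pm1}$ cancel completely, and after contracting the two Kronecker deltas against $h$ one obtains the clean formula
\[
T_q{}^{pr}(g)=\delta^p_q\,\xi^r-\delta^r_q\,\xi^p,\qquad \xi^r:=h^{r\bar k}\phi_{\bar k},
\]
so that $(\bar\partial\phi)^\sharp=\xi^r\partial_r$. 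Since the $(0,1)$-part of the Chern connection is $\bar\partial$, the covariant derivative $\nabla_{\bar l}$ acts on a tensor with only unbarred indices as the ordinary $\partial_{\bar l}$ on its components; hence, as already noted after the definition of holomorphic torsion, $g$ has holomorphic torsion if and only if the $T_q{}^{pr}(g)$ are holomorphic functions. Reading off the component with $p=q\neq r$, which equals $\xi^r$, shows this holds if and only if every $\xi^r$ is holomorphic, which is exactly (iii).

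The computations are routine once the conformal relation is in hand; the one point that must be handled with care --- and the genuine content of the proposition --- is the bookkeeping of the conformal weights, namely verifying that the powers of $\phi$ cancel in both $(\overline{\tau}^{\,g})^{\sharp_g}$ and $T_q{}^{pr}(g)$. The cancellation in the torsion components is what makes the holomorphic-torsion condition (i) independent of $\phi$ except through $(\bar\partial\phi)^\sharp$, and thereby closes the loop (i)$\Leftrightarrow$(iii)$\Leftrightarrow$(ii).
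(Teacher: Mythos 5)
Your proof is correct and takes essentially the same route as the paper: compute the Chern connection of $g=\phi^{-1}h$, obtain the torsion $T^i_{jk}(g)=\phi^{-1}\bigl(\phi_k\delta^i_j-\phi_j\delta^i_k\bigr)$, and observe that the conformal factors cancel upon raising and lowering indices, so that both the raised torsion components and the traced torsion field reduce to expressions built from $(\bar\partial\phi)^\sharp$. The only cosmetic differences are that the paper works with $\sigma=-\log\phi$ instead of $\phi$ itself and closes the equivalence via (i)$\Rightarrow$(ii)$\Leftrightarrow$(iii)$\Rightarrow$(i) rather than your two separate equivalences (i)$\Leftrightarrow$(iii) and (ii)$\Leftrightarrow$(iii); nothing of substance differs.
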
	
\begin{proof} ``(i) $\Longrightarrow$ (ii)'' is simple and explained in the introduction. Now let $\hat{\Gamma}_{kl}^j$ and $\hat{T}_{kl}^j$ be the Christoffel symbols and the components of the torsion of $g$ and let $\sigma = -\log \phi$. Then by direct calculation, we have $\hat{\Gamma}_{kl}^j  = \Gamma_{kl}^j + \sigma_k \delta_{l}^j$. Thus,
\begin{align}
	\hat{T}_{kl}^j & = \sigma_k \delta_{l}^j - \sigma_l \delta_{k}^j,\\ \tau^g & = (n-1) \sum_{k=1}^{n} \sigma_k dz_k.
\end{align}
Lowering and raising the indices using $g_{j\kba} = e^{\sigma}h_{j\kba}$ and its inverse
\begin{equation}\label{e:33}
	\hat{T}_p{}^{rs} = 
	\hat{T}^{\jba}{}_{\kba\lba} g^{r\kba} g^{s\lba} g_{p \jba}
	=
	e^{-\sigma}\left(\sigma_{\kba} h^{r \kba} \delta_{p}^s - \sigma_{\bar{l}} h^{s\bar{l}} \delta_p^ r\right)
\end{equation}
and
\begin{equation}
	\left(\overline{\tau^g}\right)^{\sharp_g}
	=
	(n-1) e^{-\sigma} h^{j\kba} \sigma_{\kba} \frac{\partial}{\partial z_j} = (n-1) (\bar{\partial} \phi)^{\sharp}.
\end{equation}
This shows that ``(ii) $\Longleftrightarrow$ (iii)''.
Finally, from \cref{e:33}, $\hat{T}_p{}^{rs}$ is holomorphic if and only if for each $r$, $e^{-\sigma}\sigma_{\kba} h^{r \kba} = \phi_{\kba} h^{r \kba}$ is holomorphic. This shows that (iii) implies (i). The proof is complete.
\end{proof}
Thus, the existence of a conformal metric with holomorphic torsion is equivalent to that of a nonvanishing real-valued solution $\phi$ to the equation $\nabla_j\nabla_k\phi =0$. In many cases considered in this paper, non-constant solutions exist locally or globally on open manifolds. However, we point out that for compact manifolds, the existence of global conformally K\"ahler metrics with nontrivial holomorphic torsion is related to the geometry of the manifolds. In fact, as an application of the ``Bochner technique'' in differential geometry, we have the following
\begin{corollary}
	Let $(M,h)$ be a compact K\"ahler manifold, and let $R_{j \bar k}$ be the Ricci curvature:
	\begin{equation}
		R_{j \bar k} = - \frac{\partial^2}{\partial z_j \partial \bar z_k} \log \det (h_{\ell \bar m}).
	\end{equation}
	 Suppose that $(R_{j \bar k})$ is non-positive. If $g = \phi^{-1} h$ is a conformally K\"ahler metric having holomorphic torsion, then $g$ is homothetic to $h$.
\end{corollary}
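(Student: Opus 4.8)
The plan is to reduce the statement to a classical Bochner vanishing argument for a holomorphic vector field. By \cref{prop:31}, the hypothesis that $g=\phi^{-1}h$ has holomorphic torsion is equivalent to the $(1,0)$-field $Z:=(\bar\partial\phi)^{\sharp}=h^{j\kba}\phi_{\kba}\,\partial_j$ being holomorphic; equivalently, as recorded after \cref{prop:31}, $\phi$ is a positive real-valued solution of $\nabla_j\nabla_k\phi=0$ for the Chern (= Levi-Civita) connection $\nabla$ of the Kähler metric $h$. So I would start from a global holomorphic vector field $Z$ on the compact Kähler manifold $(M,h)$, equipped with the extra structure that $Z$ is the gradient of the real function $\phi$ and that its potential has vanishing $(2,0)$-Hessian.

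First I would run the Bochner technique for $Z$. Since $h$ is Kähler, holomorphicity of $Z$ means $\nabla_{\lba}Z^j=0$, so the only surviving first covariant derivative is $\nabla_i Z^j$. Consider the nonnegative quantity $\int_M \|\nabla^{1,0}Z\|^2\,\dvol_h=\int_M h^{i\mba}h_{j\kba}(\nabla_i Z^j)\overline{(\nabla_m Z^k)}\,\dvol_h$. Integrating by parts in the barred index, commuting $\nabla_{\mba}\nabla_i$ (the term $\nabla_i\nabla_{\mba}Z^j$ drops out precisely because $\nabla_{\mba}Z^j=0$), and contracting the resulting curvature term against the metric, I expect the Weitzenböck identity
\[
\int_M \|\nabla^{1,0}Z\|^2\,\dvol_h = \int_M R_{j\kba}\,Z^j\,\overline{Z^k}\,\dvol_h .
\]
The hypothesis $(R_{j\kba})\leqslant 0$ makes the right-hand side $\leqslant 0$, while the left-hand side is $\geqslant 0$; hence both vanish identically and $\nabla^{1,0}Z=0$. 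Combined with $\nabla^{0,1}Z=0$, this shows that $Z$ is parallel, $\nabla Z=0$.

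Finally I would deduce that $Z\equiv 0$. A parallel field is divergence free and has constant pointwise norm; since on a Kähler manifold $\mathrm{div}\,Z=h^{i\kba}\nabla_i Z_{\kba}=h^{i\kba}\partial_i\partial_{\kba}\phi$, the function $\phi$ is harmonic, and integrating $\int_M |Z|_h^2\,\dvol_h=-\int_M \phi\,(\mathrm{div}\,Z)\,\dvol_h$ by parts forces $\int_M|Z|_h^2\,\dvol_h=0$. As $M$ is compact and connected this gives $Z\equiv 0$, i.e.\ $\bar\partial\phi\equiv 0$, so $\phi$ is constant; being positive, $\phi$ is a positive constant and $g=\phi^{-1}h$ is homothetic to $h$. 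I expect the main obstacle to be the sign bookkeeping in the Weitzenböck step: one must use the paper's curvature convention $R_{j\kba}=-\partial_j\partial_{\kba}\log\det(h_{\ell\mba})$ together with the Ricci identity for the Chern connection to guarantee that the curvature term appears with the sign for which the non-positivity of $(R_{j\kba})$ is decisive, and one must invoke the holomorphicity $\nabla_{\mba}Z^j=0$ (equivalently $\nabla_j\nabla_k\phi=0$) at exactly the right place so that no uncontrolled third-derivative term survives the integration by parts.
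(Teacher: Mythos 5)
Your proof is correct, and its skeleton coincides with the paper's: both reduce, via \cref{prop:31}, to the statement that $Z=(\bar\partial\phi)^{\sharp}$ is a holomorphic vector field on a compact K\"ahler manifold with non-positive Ricci curvature, both invoke the Bochner principle to conclude that $Z$ is parallel, and both then combine the gradient structure of $Z$ with compactness to force $\phi$ to be constant. The differences are in execution. Where you derive the Weitzenb\"ock identity
\begin{equation*}
\int_M \|\nabla^{1,0}Z\|^2\,\dvol_h \;=\; \int_M R_{j\bar k}\,Z^j\,\overline{Z^k}\,\dvol_h
\end{equation*}
by hand (and you do get the sign right for the paper's convention $R_{j\bar k}=-\partial_j\partial_{\bar k}\log\det(h_{\ell\bar m})$: non-positive Ricci then forces both sides to vanish), the paper simply cites Bochner's theorem from \cite[Theorem~2.4.1]{futaki}; your version is self-contained, the paper's is shorter. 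For the endgame, the paper notes that parallelism gives $\bar\partial\partial\phi=0$, so $\phi$ is pluriharmonic, and concludes by the maximum principle; you instead observe that parallelism makes $Z$ divergence-free, hence $\phi$ harmonic, and kill $Z$ outright via $\int_M|Z|_h^2\,\dvol_h=-\int_M\phi\,(\mathrm{div}\,Z)\,\dvol_h=0$. Both endings are valid, and both correctly exploit the fact that $Z$ is the gradient of a globally defined real function --- an input that cannot be dropped, since a parallel holomorphic field need not vanish (flat tori carry plenty), which is precisely the example the paper records immediately after the corollary.
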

For example, there is \textit{no} conformally flat metric with holomorphic torsion on complex flat tori $\CC^n/\Lambda$, $\Lambda$ being a lattice in $\CC^n$, other than the flat metrics.
\begin{proof}
	If $g$ has holomorphic torsion, then by \cref{prop:31}, $(\bar\partial \phi)^{\sharp}$ is holomorphic. By a result of Bochner (see \cite[Theorem~2.4.1]{futaki}), $(\bar\partial \phi)^{\sharp}$ is parallel. In particular, $\bar\partial\partial \phi = 0$ and hence $\phi$ is pluriharmonic. But $M$ is compact and the maximum principle implies that $\phi$ is a constant. 
\end{proof}
\begin{remark} The proof of \cref{prop:31} above is purely local. Thus, we can state a version of ``(i) $\Longleftrightarrow$ (ii)'' for locally conformally K\"ahler manifolds as follows. Recall that if $(M,g)$ is locally conformally K\"ahler, then there exists a closed 1-form $\theta$, the \textit{Lee form}, that satisfies
	\begin{equation}
	d\omega = \theta \wedge \omega, 
	\end{equation}
	where $\omega = i g_{j\kba} dz_j \wedge d\zba_{k}$ is the fundamental $(1,1)$-form in local coordinates (see \cite{moroianu}). Condition (ii) is equivalent to the real holomorphicity of the Lee field $\theta^{\sharp}$. Thus, $g$ has holomorphic torsion if and only if the \textit{Lee vector field} $\theta^{\sharp}$ is holomorphic. This property was studied in, e.g., \cite{moroianu}, which also gives an abundance of conformally K\"ahler metrics on a Hopf manifold (as in \cref{ex1}) with holomorphic Lee field and hence they all have holomorphic torsion.
\end{remark}
\subsection{Conformal flat metrics on $\CC^n$}
\begin{proposition}\label{prop:33} Let $\phi$ be a smooth function such that the set $\{\phi >0 \}$ is a nonempty open set in $\CC^n$. Then, a conformally flat Hermitian metric $g_{j\kba} = \phi^{-1}  \delta_{jk}$ on $\{\phi>0 \}$ has holomorphic torsion if and only if
	\begin{equation}\label{e:flata}
		\phi = \sum_{j,k = 1}^n c_{j\kba} z_j \zba_k +  \Re \sum_{k=1}^n \alpha_k z_k + \gamma,
	\end{equation}
	where $c_{j\kba}$ is a Hermitian matrix, $\alpha_k \in  \CC$ and $\gamma \in \RR$.
\end{proposition}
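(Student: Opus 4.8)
The plan is to translate the holomorphic-torsion condition into a PDE on $\phi$ using \cref{prop:31}. Since the background metric here is the flat metric $h_{j\kba} = \delta_{jk}$ on $\CC^n$, the Chern connection of $h$ is trivial, so the covariant condition $\nabla_j \nabla_k \phi = 0$ (noted right after \cref{prop:31} as equivalent to holomorphic torsion) reduces to the ordinary second-derivative condition
\begin{equation*}
	\frac{\partial^2 \phi}{\partial z_j \partial z_k} = 0, \qquad 1\le j,k \le n.
\end{equation*}
Equivalently, by \cref{prop:31}(iii), I would use that $g$ has holomorphic torsion iff $(\bar\partial\phi)^{\sharp} = \sum_j \phi_{\zba_j}\,\partial_{z_j}$ is a holomorphic vector field, i.e. each component $\phi_{\zba_j}$ is a holomorphic function on $\{\phi>0\}$. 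These two formulations are the same up to conjugation, and I would pick whichever makes the integration cleanest.

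**Integrating the PDE.**
Starting from holomorphicity of $\phi_{\zba_j}$ for each $j$, I first note that $\phi_{\zba_j}$ holomorphic means $\partial_{z_k}\phi_{\zba_j}$ has no $\zba$-dependence, but also $\phi$ is real-valued, so $\overline{\phi_{\zba_j}} = \phi_{z_j}$, which forces $\phi_{z_j}$ to be antiholomorphic. Writing $\phi_{\zba_j} = f_j(z)$ for holomorphic $f_j$, the mixed-partial compatibility $\partial_{\zba_k} f_j = \partial_{\zba_k}\phi_{\zba_j} = \partial_{\zba_j}\phi_{\zba_k} = \partial_{\zba_j} f_k$ together with $f_j$ holomorphic gives that $\partial_{z_k} f_j$ is a constant; call it $c_{k\jba}$ (a constant matrix). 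Integrating $f_j = \phi_{\zba_j}$ once in each variable then yields
\begin{equation*}
	\phi_{\zba_j} = \sum_{k=1}^n c_{k\jba}\, z_k + \beta_j,
\end{equation*}
with $\beta_j \in \CC$ constant, and a second integration in the $\zba$ variables recovers $\phi$ up to a holomorphic function; reality of $\phi$ then pins down the remaining freedom.

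**Assembling the normal form and checking reality.**
Integrating once more, $\phi = \sum_{j,k} c_{k\jba} z_k \zba_j + \sum_j \beta_j \zba_j + (\text{holomorphic in }z)$. Imposing that $\phi$ is real-valued forces the holomorphic remainder to be $\sum_k \alpha_k z_k + (\text{const})$ with $\overline{\alpha_k}$ matching the $\beta$-coefficients, so that the linear part collapses to $\Re\sum_k \alpha_k z_k$, and forces the Hermitian symmetry $c_{j\kba} = \overline{c_{k\jba}}$ on the quadratic coefficient matrix; the constant is a real number $\gamma$. This produces exactly \cref{e:flata}. Conversely, I would verify that any $\phi$ of this form has $\phi_{\zba_j} = \sum_k c_{j\kba} z_k + \tfrac12 \overline{\alpha_j}$ holomorphic, so the condition is also sufficient. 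I do not expect a genuine obstacle here: the computation is an elementary integration of an overdetermined but consistent linear PDE system. The one point demanding care is bookkeeping the reality constraint — tracking how complex conjugation links the quadratic coefficients into a Hermitian matrix and collapses the linear term into a real part — and ensuring the integration is carried out on the open set $\{\phi>0\}$, which is where $g$ is actually defined; since that set is open and the identities are polynomial/holomorphic, the local conclusions extend without difficulty.
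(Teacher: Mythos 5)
Your reduction is the same as the paper's: by \cref{prop:31}, holomorphic torsion for $g_{j\kba}=\phi^{-1}\delta_{jk}$ is equivalent to holomorphicity of each $\phi_{\zba_k}$, i.e.\ to the system $\partial^2\phi/\partial z_j\partial z_k=0$. The difference lies in how this system is solved: the paper simply cites Gross--Qian \cite{grossqian} for the fact that real-valued solutions are exactly the Hermitian quadratic polynomials \cref{e:flata}, whereas you integrate the system from scratch. That self-contained route is viable and arguably more informative than the citation, and your reality bookkeeping at the end (Hermitian symmetry of $c_{j\kba}$, collapse of the linear terms into $\Re\sum_k \alpha_k z_k$, real constant $\gamma$) is correct, as is the easy converse check.

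However, the one step that carries all the weight is mis-justified as written. The displayed chain $\partial_{\zba_k} f_j = \partial_{\zba_k}\phi_{\zba_j} = \partial_{\zba_j}\phi_{\zba_k} = \partial_{\zba_j} f_k$ is vacuous: every term in it is zero precisely because $f_j$ and $f_k$ are holomorphic, so it cannot yield the conclusion that $\partial_{z_k} f_j$ is constant. What does yield it is the mixed-variable cross-derivative identity, combined with the reality observation you made just before:
\begin{equation*}
	\partial_{z_k} f_j \;=\; \partial_{z_k}\partial_{\zba_j}\phi \;=\; \partial_{\zba_j}\partial_{z_k}\phi \;=\; \partial_{\zba_j}\,\overline{f_k}.
\end{equation*}
The left-hand side is holomorphic (a $z$-derivative of a holomorphic function), while the right-hand side is antiholomorphic (a $\zba$-derivative of an antiholomorphic function); a function that is both is locally constant, and taking conjugates of this identity also gives the Hermitian relation $\overline{c_{k\jba}}=c_{j\kba}$ directly. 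With this one-line replacement your integration goes through exactly as you wrote it. One further shared caveat: the constancy is only per connected component of $\{\phi>0\}$, so strictly speaking the normal form \cref{e:flata} is obtained on each component separately; the paper's citation-based proof glosses over this point as well.
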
	
\begin{proof} From \cref{prop:31}, the metric $g$ has holomorphic torsion if and only if ${\partial\phi}/{\partial \zba_k}$ is holomorphic for each $k$, or equivalently,
\begin{equation}
	\frac{\partial^2 \phi}{\partial z_j \partial z_j} = 0.
\end{equation}
This PDE has been solved explicitly by Gross and Qian in \cite{grossqian}.
Real-valued solutions to this equation are known to have the form \cref{e:flata}. The proof is complete.
\end{proof}
\begin{example}
	In \cref{e:flata}, if we take $c_{j\kba}$ to be the identity matrix, $\alpha_k = 0$, and $\gamma = 1$, then we obtain on $\CC^n$ a conformally flat Hermitian metric 
	\begin{equation}\label{e:35}
		g_{j\kba} = \frac{\delta_{jk}}{1+|z|^2}.
	\end{equation}
	On the other hand, if we take $c_{j\kba}$ to be minus the identity matrix, $\alpha_k = 0$, and $\gamma = 1$, then we obtain the metric \begin{equation}
		g_{j\kba} = \frac{\delta_{jk}}{1-|z|^2},
	\end{equation}
	which is a conformally flat Hermitian metric on the unit ball $\mathbb{B}^n:=\{|z| <1\}$, cf. \cite{hasson}. Both metrics have holomorphic torsion.

\end{example}
\begin{theorem}\label{thm34}
	Let $M = \mathbb{B}^n$ and let
	\begin{equation}\label{e:37}
	g_{j\kba} = \frac{\delta_{jk}}{1-|z|^2}
	\end{equation}
	be a conformally flat metric on $\mathbb{B}^n$. If $\psi$ is a real-valued function on $\mathbb{B}^n$ such that $(\bar{\partial} \psi)^{\sharp}$ is holomorphic, then
	\begin{equation}
		\psi(z) = A + B \log (1-|z|^2)
	\end{equation}
	for some real constants $A$ and $B$.
\end{theorem}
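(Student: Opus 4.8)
The plan is to turn the hypothesis into a first-order condition on $\psi$ and then use that $\psi$ is \emph{real}-valued to rigidify the solution. Since the inverse metric is $g^{j\bar k}=(1-|z|^2)\delta_{jk}$, the musical operator reads $(\bar\partial\psi)^{\sharp}=(1-|z|^2)\sum_j\psi_{\bar j}\,\partial/\partial z_j$, so the hypothesis says exactly that each coefficient
\[
g_j:=(1-|z|^2)\,\frac{\partial\psi}{\partial\bar z_j}
\]
is holomorphic on $\mathbb{B}^n$. Thus $\partial\psi/\partial\bar z_j=g_j/(1-|z|^2)$ with $g_j$ holomorphic, and reality of $\psi$ yields the conjugate identity $\partial\psi/\partial z_j=\overline{g_j}/(1-|z|^2)$. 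The target is to show $g_j=Bz_j$ for a single real constant $B$: once this holds, $\partial\psi/\partial\bar z_j=-B\,\partial_{\bar j}\log(1-|z|^2)$ forces $\psi+B\log(1-|z|^2)$ to be holomorphic and real, hence constant, which is the claim (after renaming $B$).

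First I would extract the two compatibility relations forced by commutation of partial derivatives. Equating $\partial_{\bar l}(g_j/(1-|z|^2))$ and $\partial_{\bar j}(g_l/(1-|z|^2))$, both equal to $\partial_{\bar l}\partial_{\bar j}\psi$, gives $g_j z_l=g_l z_j$ for all $j,l$; geometrically the holomorphic field $Z=\sum_j g_j\,\partial/\partial z_j$ is pointwise proportional to the radial field $E=\sum_j z_j\,\partial/\partial z_j$. Equating $\partial_k(g_j/(1-|z|^2))$ with $\partial_{\bar j}(\overline{g_k}/(1-|z|^2))$, both equal to $\partial_k\partial_{\bar j}\psi$, gives the reality relation
\[
\frac{\partial g_j}{\partial z_k}(1-|z|^2)+g_j\bar z_k=\overline{\frac{\partial g_k}{\partial z_j}}(1-|z|^2)+\overline{g_k}\,z_j .
\]

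For $n\ge 2$ the proportionality relation lets me write $g_j=z_j h$ for a single holomorphic $h$: the ratio $h:=g_m/z_m$ is well defined and holomorphic on $\mathbb{B}^n\setminus\{0\}$ (the local expressions agree on overlaps because $g_m z_{m'}=g_{m'}z_m$) and extends across the origin by Hartogs' removable-singularity theorem, since $\{0\}$ has codimension $\ge 2$. Substituting $g_j=z_j h$ into the reality relation and isolating the part independent of $\bar z$ in an off-diagonal equation ($j\ne k$) collapses it to $z_j\,\partial h/\partial z_k=0$, so $h$ is constant; the diagonal equation ($j=k$) then reads $(h-\bar h)\bigl(1-|z|^2+|z_j|^2\bigr)=0$, forcing $h\in\RR$. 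Hence $g_j=Bz_j$ with $B\in\RR$, and the conclusion follows from the previous paragraph.

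The case $n=1$, where $g_1 z_1=g_1 z_1$ is vacuous, is the one genuinely delicate point and I would do it by hand. Matching the $\bar z$-independent part of the reality relation shows $g_1'$ is affine, so $g_1(z)=a+\bar c\,z+\tfrac{\bar a}{2}z^2$ with $c\in\RR$; feeding this back into the full relation and comparing the remaining mixed monomials forces $a=0$, leaving $g_1=Bz$ with $B=\bar c\in\RR$. I expect the main labor to be exactly this bidegree bookkeeping together with the $n=1$ analysis; the $n\ge 2$ reduction via $Z\parallel E$ is clean, and the final passage from $g_j=Bz_j$ to $\psi=A+B\log(1-|z|^2)$ is immediate.
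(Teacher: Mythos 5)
Your proof is correct, and it takes a genuinely different route from the paper's. Both arguments begin with the same observation, that $g_j := (1-|z|^2)\psi_{\bar j}$ must be holomorphic, but the paper then integrates this relation in $\bar z_k$ to obtain $z_k\psi = -f^{(k)}\log(1-|z|^2) + v^{(k)}$ with $v^{(k)}$ holomorphic in $z_k$, and carries out a long coefficient comparison of the power-series expansion in $(z_k,\bar z_k)$ for each fixed $k$, followed by a patching argument over the different $k$ to identify the constants. You instead exploit the integrability conditions of the gradient system directly: commuting $\partial_{\bar l}\partial_{\bar j}\psi$ yields the pointwise proportionality $g_jz_l = g_lz_j$ (the holomorphic field is radial), while reality of $\psi$ yields the mixed relation between $\partial_k g_j$ and $\overline{\partial_j g_k}$; for $n\geqslant 2$ you factor $g_j = z_j h$ using Hartogs' extension across the origin, and bidegree extraction then forces $h$ to be a real constant, with $n=1$ handled by a short one-variable expansion. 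The trade-off is clear: your argument is shorter and more structural in dimension $n\geqslant 2$ (it makes transparent why the only possibility is the radial field), but it relies on Hartogs (hence genuinely needs $n\geqslant 2$) and requires a separate low-dimensional case, whereas the paper's expansion argument is more computational but dimension-uniform. I checked the key steps: the two compatibility relations are as you state; the holomorphic ($\bar z$-degree zero) part of the off-diagonal relation is indeed $z_j\,\partial h/\partial z_k$ on the left and $0$ on the right, so $h$ is constant; the diagonal relation then gives $(h-\bar h)(1-|z|^2+|z_j|^2)=0$, hence $h\in\RR$ since $1-|z|^2+|z_j|^2>0$ on the ball; the $n=1$ computation forces $g_1=Bz_1$ with $B$ real; and the final step, that $\psi+B\log(1-|z|^2)$ is holomorphic and real-valued, hence constant on the connected ball, is valid.
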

\begin{proof}
	Let $\psi$ be a weight function on the Hermitian manifold $(\mathbb{B}^n, g)$ such that $(\bar{\partial} \psi)^{\sharp}$ is holomorphic. Since $g^{\kba l} = (1-|z|^2)\delta_{kl}$, we have 
	\begin{equation}
		(\bar{\partial} \psi)^{\sharp} = (1-|z|^2) \sum_{k=1}^n \psi_{\kba} \partial_k.
	\end{equation}
	Thus, the holomorphicity of $(\bar{\partial} \psi)^{\sharp}$ is equivalent to
	\begin{equation}
		f^{(k)}:=(1-|z|^2) \psi_{\kba}
	\end{equation}
	is holomorphic for each $k$. Now, we compute
	\begin{equation}
		\frac{\partial (z_k\psi)}{\partial \zba_{k}} = \frac{z_k f^{(k)}}{1-|z|^2} 
		=
		\frac{\partial}{\partial \zba_k} \left(-f^{(k)}\log (1-|z|^2)\right).
	\end{equation}
	Therefore, 
	\begin{equation}\label{e:311}
		z_k\psi = - f^{(k)}\log (1-|z|^2) + v^{(k)},
	\end{equation}
	where $v^{(k)}$ is holomorphic in $z_k$. Thus, both sides of \cref{e:311} are real-analytic in $z_k$. Expanding in power series at $z_k = 0$ (keeping other variables fixed), we obtain
	\begin{equation}
		f^{(k)}(z) = \sum_{l = 0}^{\infty} A_l z_k^l,
		\quad 
		v^{(k)}(z)  = \sum_{s = 0}^{\infty} C_s z_k^s,
		\quad 
		\psi(z) = \sum_{p,q = 0}^{\infty} c_{pq} z_k^{p} \zba_k^q.
	\end{equation}
Plugging these into equation \cref{e:311} above, we get
	\begin{equation}\label{e:313}
		\sum_{p,q} c_{pq} z_k^{p+1} \zba_k^q
			=
		- \left(\sum_{l = 0}^{\infty} A_l z_k^l	\right) \left(\sum_{m = 0}^{\infty} B_m z_k^m\zba_k^m	\right) + \sum_{s = 0}^{\infty} C_s z_k^s,
	\end{equation}
where 
\begin{equation}
	\log (1-|z|^2) = \sum_{m = 0}^{\infty} B_m z_k^m\zba_k^m,
	\quad 
	B_0 = \log \left( 1- \sum_{j \ne k} |z_j|^2 \right).
\end{equation}
	For each set $(z_j\colon j\ne k)$ fixed, the series involved in equation \cref{e:313} above are uniformly and absolutely convergent in a small disc $\{|z_k| < r \}$. In particular, we can expand the product of infinite sums on the right-hand side and equating the coefficients of monomials $z_k^p \zba_k^q$. Thus, comparing the terms with bi-degree $(p+1, 0)$, we have
	\begin{equation}
		c_{p,0}= - A_{p+1} B_0 + C_{p+1}, \quad p = 0,1,2, \dots.
	\end{equation}
	Comparing terms of bi-degree $(1, q)$ we get
	\begin{equation}
		c_{0,0} = - A_1B_0 + C_1,
		\quad
		c_{0,1} = - A_0 B_1, 
		\quad 
		c_{0, q} = 0\ \text{for}\ q \geqslant 2.
	\end{equation}
	Thus, by the reality of $\psi$, we have 
	\begin{equation}
		c_{p,0} = \overline{c_{0,p}} = 0, \quad \forall\, p \geqslant 2.
	\end{equation}
	Then we find that
	\begin{equation}
		C_p = B_0 A_p \ \text{for}\ p = 0\ \text{and} \ p\geqslant 3.
	\end{equation}
	Hence,
	\begin{equation}
		v^{(k)} = B_0 f^{(k)} + c_{0,0} z_k + c_{1,0} z_k^2.
	\end{equation}
	Plugging this into the original equation \cref{e:311}, we find that
	\begin{equation}\label{e:320}
		-A_0B_1|z_k|^2 + \sum_{p+q \geqslant 2} c_{p,q} z_k^{p+1} \zba_k^q
		=
		- \left(\sum_{l = 0}^{\infty} A_l z_k^l	\right) \left(\sum_{m = 1}^{\infty} B_m z_k^m\zba_k^m	\right).
	\end{equation}
	Equating the terms of bi-degree $(p+1,p)$ we have
	\begin{equation}
		c_{p,p} = -A_1 B_p, \quad p\geqslant 1.
	\end{equation}
	Equating the terms of bi-degree $(p,p)$ we have
	\begin{equation}
		c_{p-1,p} = -A_0 B_p, \quad p \geqslant 1.
	\end{equation}
	Taking the conjugate, we have
	\begin{equation}
		c_{p+1,p} = \overline{c_{p,p+1}} = -\overline{A_0} B_{p+1}.
	\end{equation}
	There's no terms of bi-degree $(p,q)$ if $p< q$ in \cref{e:320}. Thus, $A_3 = A_4 = \dots = 0.$ On the other hand, equating the terms of bi-degree $(p+2,p)$, we find that
	\begin{equation}
		 -A_2 B_p = c_{p+1,p} = -\overline{A_0} B_{p+1}.
	\end{equation}
	This holds for all $p$ if and only if $A_0 = A_2 = 0$ and hence $c_{0,1} = c_{1,0} = 0$. Consequently,
	\begin{equation}
		z_k (\psi - c_{0,0}) = - f^{(k)}(z) \left[\log (1-|z|^2) - B_0\right].
	\end{equation}
	By the reality of $\psi$, $c_{0,0}$, and $\log (1-|z|^2)$, and holomorphicity of $f^{(k)}(z)$ in all variables, we must have
	\begin{equation}
		f^{(k)} (z) = A_1 z_k,
	\end{equation}
	where $A_1$ does not depend on $z_1, z_2, \dots, z_n $. Thus,
	\begin{equation}
		\psi(z) = c_{0,0} + A_1 B_0 + A_1 \log (1-|z|^2)
		=
		C_1 + A_1 \log (1-|z|^2),
	\end{equation}
	where $C_1$ does not depend on $z_k$. To show that $C_1$ is a constant, we assume that $l \ne k$. By the same argument with $k$ is replaced by $l$, we have
	\begin{equation}
		\psi (z) = \tilde{C}_1 + \tilde{A}_1 \log (1 - |z|^2)
	\end{equation}
	for $\tilde{A}_1$ a constant and  $\tilde{C}_1$ does not depend on $z_l$. We have
	\begin{equation}
		\tilde{C}_1 - {C}_1 = (A_1 - \tilde{A}_1)\log (1 - |z|^2).
	\end{equation}
	Applying $\partial^2/\partial z_l \partial z_k$ to both sides, we have
	\begin{align*}
		0 = \frac{\partial^2}{\partial z_l \partial z_k} \left(\tilde{C}_1 - {C}_1\right)
		& =
		\frac{\partial^2}{\partial z_l \partial z_k}
		\left((A_1 - \tilde{A}_1)\log (1 - |z|^2)\right)\\
		& =
		(A_1 - \tilde{A}_1) \zba_k \zba_l (1-|z|^2)^{-2}.
	\end{align*}
	This shows that $A_1 = \tilde{A}_1$ and $C_1 = \tilde{C}_1$. In particular, $C_1$ does not depend on $z_l$, for any~$l$. This completes the proof.
\end{proof}
In Section 5.2 of \cite{hasson}, the authors studied the $\partial$-complex on the weighted Bergman spaces $A^2_{(p,0)}(\mathbb{B}^n, g_{j\kba}, e^{-\psi})$ where $g$ is given in \cref{e:37} above and $\psi (z) = \alpha \log (1-|z|^2)$. \cref{thm34} shows that this choice of the weight function is essentially the only one that makes the $\partial$-complex having the holomorphicity/duality property.

\subsection{Conformal metrics on the complex projective space}
The complex projective space $\mathbb{CP}^n$ is the quotient space
\begin{equation}
	\mathbb{CP}^n = \left(\CC^{n+1} \setminus \{0\}\right)/\sim
\end{equation}
where $\sim$ is the equivalent relation 
\begin{equation}
	(Z_0, Z_1, \dots , Z_n) \sim (Z_0', Z_1' ,\dots, Z_n')
\end{equation}
if and only if $Z_j = \lambda Z_j'$ for some $\lambda\in \CC$. We denote by $[Z_0 \colon Z_1 \colon \cdots \colon Z_n]$ the equivalence class of $(Z_0, Z_1, \dots , Z_n)$ and by $\pi \colon \CC^{n+1} \setminus \{0\} \rightarrow \mathbb{CP}^n$ the canonical projection. Then $\pi$ induces a natural complex manifold structure on $\mathbb{CP}^n$. Moreover, $\mathbb{CP}^n$ is covered by $n+1$ coordinate charts $U_{j}: = \{[Z_0 \colon Z_1 \colon \cdots \colon Z_n] \in \mathbb{CP}^n \colon Z_j \ne 0\}$, $j=0,1,\dots, n$, each of which is biholomorphic to $\CC^{n}$ via the map
\begin{equation}
	\phi_j([Z_0 \colon Z_1 \colon \cdots \colon Z_n]) \to \left(Z_0/Z_j, Z_1/Z_j, \dots, \widehat{Z_j /Z_j}, \dots, Z_n/Z_j\right),
\end{equation}
where the $j^{th}$ coordinate in the right-hand side is removed. The Fubini-Study metric on $\mathbb{CP}^n$ can be described in each coordinates chart $U_j \cong \CC^n$. For example, on $U_0$ the Fubini-Study metric $h_{FS}$ reduces to the K\"ahler metric on $\CC^n$ given by
\begin{equation}
	h_{j\kba} = \partial_j \partial_{\kba} \log (1+ |z|^2), \quad z_k = Z_k/Z_0,\ k = 1,2,\dots, n.
\end{equation}
Then $h_{FS}$ is a K\"ahler metric of constant holomorphic sectional curvature $K = 2$; see \cite{kobayashi}.

\cref{prop:31} and a result of Gross-Qian \cite[\S 3.3]{grossqian} give the following
\begin{proposition} Let $\phi$ be a smooth function such that the set $\{\phi >0 \}$ is a nonempty open set in $\CC^n$. Then, a conformally Fubini-Study Hermitian metric $g_{j\kba} = \phi^{-1} h_{j\kba}$ on $\{\phi>0 \}$ has holomorphic torsion if and only if
	\begin{equation}\label{e:flatb}
	(1+|z|^2)\phi = \sum_{j,k = 1}^n c_{j\kba} z_j \zba_k +  \Re \sum_{k=1}^n \alpha_k z_k + \gamma,
	\end{equation}
	where $c_{j\kba}$ is a Hermitian matrix, $\alpha_k \in  \CC$ and $\gamma \in \RR$.
\end{proposition}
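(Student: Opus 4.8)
The plan is to reduce the statement, via \cref{prop:31}, to the same flat Hessian equation that was solved in \cref{prop:33}. By \cref{prop:31} the conformal metric $g = \phi^{-1}h$ has holomorphic torsion if and only if $(\bar\partial\phi)^{\sharp}$ is holomorphic, the sharp being taken with respect to the Fubini--Study metric $h_{j\kba} = \partial_j\partial_{\kba}\log(1+|z|^2)$. First I would record the inverse metric. A direct computation gives $h_{j\kba} = (1+|z|^2)^{-1}\delta_{jk} - (1+|z|^2)^{-2}\zba_j z_k$, and by the Sherman--Morrison formula (or by direct verification) the transpose inverse is
\begin{equation*}
	h^{j\kba} = (1+|z|^2)\left(\delta_{jk} + z_j\zba_k\right).
\end{equation*}
Hence the components of $(\bar\partial\phi)^{\sharp}$ are
\begin{equation*}
	X^j := \sum_k h^{j\kba}\phi_{\kba} = (1+|z|^2)\Bigl(\phi_{\jba} + z_j\sum_k\zba_k\phi_{\kba}\Bigr),
\end{equation*}
and $g$ has holomorphic torsion precisely when each $X^j$ is holomorphic.

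The decisive step is the substitution $\Phi := (1+|z|^2)\phi$, which is smooth, real-valued, and positive exactly on $\{\phi>0\}$. Writing $\phi = \Phi/(1+|z|^2)$ and using $\partial_{\kba}(1+|z|^2) = z_k$, a short computation collapses the expression for $X^j$ to
\begin{equation*}
	X^j = \Phi_{\jba} + z_j\Bigl(\sum_k\zba_k\Phi_{\kba} - \Phi\Bigr).
\end{equation*}
Since $X^j$ is smooth, it is holomorphic if and only if $\partial_{\lba}X^j = 0$ for all $l$; differentiating the above and cancelling the two $z_j\Phi_{\lba}$ terms yields the system
\begin{equation*}
	\frac{\partial^2\Phi}{\partial\zba_j\partial\zba_l} + z_j\sum_k\zba_k\,\frac{\partial^2\Phi}{\partial\zba_k\partial\zba_l} = 0,\qquad j,l = 1,\dots,n.
\end{equation*}

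Finally I would show this system is equivalent to the vanishing of the full antiholomorphic Hessian of $\Phi$. Fix $l$ and set $w_j := \partial^2\Phi/\partial\zba_j\partial\zba_l$; the system reads $w_j + z_j S = 0$ with $S := \sum_k\zba_k w_k$. Multiplying by $\zba_j$ and summing over $j$ gives $S + |z|^2 S = 0$, so $S = 0$ and hence $w_j = 0$ for all $j$. Thus $\partial^2\Phi/\partial\zba_j\partial\zba_l = 0$ for all $j,l$, which by the reality of $\Phi$ is the same as $\partial^2\Phi/\partial z_j\partial z_k = 0$. This is exactly the PDE solved by Gross--Qian used already in \cref{prop:33}, whose real-valued solutions are $\Phi = \sum_{j,k}c_{j\kba}z_j\zba_k + \Re\sum_k\alpha_k z_k + \gamma$ with $c_{j\kba}$ Hermitian, $\alpha_k\in\CC$, $\gamma\in\RR$; recalling $\Phi = (1+|z|^2)\phi$ gives \cref{e:flatb}. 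Every step in the chain is an equivalence, so the converse is automatic. The one genuinely non-obvious point — and the crux of the argument — is the substitution $\Phi = (1+|z|^2)\phi$, which linearizes the Fubini--Study problem into the same flat Hessian equation; everything else is routine algebra.
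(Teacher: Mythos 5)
Your proof is correct, and it is more self-contained than the paper's own treatment. The paper disposes of this proposition in one line: it combines \cref{prop:31} with a result of Gross--Qian \cite[\S 3.3]{grossqian}, which solves the relevant equation $\nabla_j\nabla_k\phi = 0$ directly for the Fubini--Study metric, and no computation is given. You make the same first reduction via \cref{prop:31}, but then, instead of citing the projective-space case of Gross--Qian, you linearize the problem by the substitution $\Phi = (1+|z|^2)\phi$: after computing $h^{j\kba} = (1+|z|^2)(\delta_{jk}+z_j\zba_k)$ and the cancellation $X^j = \Phi_{\jba} + z_j\left(\sum_k \zba_k\Phi_{\kba}-\Phi\right)$, you show that holomorphicity of $(\bar\partial\phi)^{\sharp}$ is equivalent to the flat Hessian equation $\partial^2\Phi/\partial z_j\partial z_k = 0$, which is exactly the PDE already invoked in \cref{prop:33}. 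Your computations check out (in particular the step $w_j + z_j S = 0 \Rightarrow (1+|z|^2)S = 0 \Rightarrow w_j = 0$), and every link in the chain is an equivalence, so both directions of the statement follow. What this buys: only the flat case of the Gross--Qian result is needed, and the identical substitution with $1-|z|^2$ in place of $1+|z|^2$ would handle the hyperbolic case (\cref{cor:34}) as well, so your argument unifies all three space forms; what it costs is a page of computation that the paper avoids by citation. One shared caveat, not a gap in your argument relative to the paper: as in \cref{prop:31}, the ``only if'' direction implicitly requires $n\geqslant 2$, since for $n=1$ the torsion of any Hermitian metric vanishes identically and the conclusion would fail; the paper's statement carries the same unstated restriction.
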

\begin{remark}
	Each function $\phi$ in \cref{e:flatb} give rises to a Hermitian metric conformal to the Fubini-Study metric on a subset $\Omega = \{\phi >0 \}$ of $\CC^n \subset  \mathbb{CP}^n$. Depending on the choice of coefficients, $\Omega$ may be bounded, unbounded, or the whole $\CC^n$. All conformal metrics on the whole $\mathbb{CP}^n$ with holomorphic torsion can be found using a result of Futaki \cite{futaki}. They arise as $\phi^{-1} h_{FS}$ ($h_{FS}$ is the Fubini-Study metric) where $\phi = \phi_0 + C$, where $\phi_0$ is in the first eigenspace of the Laplacian, and $C$ is a real constant, $C > -\min \phi_0$.
\end{remark}
An particular interesting case is when $c_{j\kba} = 0$, $\alpha_k = 0$ and $\gamma = 1$. In this case we have
\begin{equation}\label{e:34}
	g_{j\kba} = \delta_{j\kba} - \frac{\zba_j z_k}{1+|z|^2}
\end{equation}
is a Hermitian non-K\"ahler metric on $\CC^n$ with holomorphic torsion. This metric is analogous to the ``half'' hyperbolic metric on the unit ball discussed in the next section. In the special case $n=1$, this is the same as \cref{e:35} and the metric is the well-known Hamilton's ``cigar'' soliton (a.k.a the Witten's blackhole.)
\begin{theorem} Let $g_{j\kba}$ be as in \cref{e:34}.
	If $\psi$ is real-valued function on $(\CC^n, g_{j\kba})$ such that $(\bar\partial \psi)^{\sharp}$ is holomorphic, then 
	\begin{equation}
	\psi(z) = A + B \log (1+|z|^2)
	\end{equation}
	for $A$ and $B$ are two real constants.
\end{theorem}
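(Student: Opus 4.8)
The plan is to translate the holomorphicity hypothesis into a first-order system for the antiholomorphic derivatives $\psi_{\kba}:=\partial\psi/\partial\zba_k$, and then exploit integrability together with the reality of $\psi$ to pin $\psi$ down. Since $g_{j\kba}=\delta_{jk}-\zba_j z_k/(1+|z|^2)$ is a rank-one perturbation of the identity, the Sherman--Morrison formula gives the (transpose inverse) metric
\[
 g^{j\kba}=\delta_{jk}+z_j\zba_k .
\]
Writing $(\bar\partial\psi)^{\sharp}=\sum_j f^{(j)}\partial_j$ with $f^{(j)}=\sum_k g^{j\kba}\psi_{\kba}=\psi_{\jba}+z_j\sum_k\zba_k\psi_{\kba}$, the hypothesis is that each $f^{(j)}$ is holomorphic. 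Lowering the index back with $g_{j\kba}$ inverts this relation and yields
\[
 \psi_{\kba}=f^{(k)}-\frac{z_k}{1+|z|^2}\,\sum_j\zba_j f^{(j)},
\]
an expression in which the $f^{(j)}$ are holomorphic.

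Next I would impose the integrability condition $\partial_{\lba}\psi_{\kba}=\partial_{\kba}\psi_{\lba}$ (which holds automatically, both sides being $\psi_{\kba\lba}$). A short computation shows that the terms carrying the factor $\sum_j\zba_j f^{(j)}$ cancel, leaving exactly $z_k f^{(l)}=z_l f^{(k)}$ for all $k,l$. For $n\geqslant 2$ this forces $z_l\mid f^{(l)}$ (as $z_k,z_l$ are coprime for $k\neq l$), so $f^{(l)}=z_l q_l$ with $q_l$ holomorphic and, comparing, $q_l=q_k=:q$ independent of the index; by Hartogs $q$ extends to an entire function. Substituting $f^{(j)}=z_j q$ collapses the formula to
\[
 \psi_{\kba}=\frac{z_k\,q}{1+|z|^2}=q\,\frac{\partial}{\partial\zba_k}\log(1+|z|^2),
\]
so $\psi-q\log(1+|z|^2)$ is annihilated by every $\partial_{\kba}$; hence $\psi=q\log(1+|z|^2)+G$ with $q$ and $G$ holomorphic.

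The remaining, and genuinely hard, step is to upgrade the holomorphic function $q$ to a real constant. Here I use that $\psi$ is real: from $\psi=\bar\psi$ one gets $(q-\bar q)\log(1+|z|^2)=\bar G-G$. Applying $\sum_m\partial_m\partial_{\bar m}$ kills the right-hand side (a difference of a holomorphic and an antiholomorphic function is pluriharmonic) and, using $\partial_m\partial_{\bar m}(q-\bar q)=0$ together with $\sum_m\partial_m\partial_{\bar m}\log(1+|z|^2)=(n+(n-1)|z|^2)/(1+|z|^2)^2$, produces the scalar identity
\[
 (1+|z|^2)\bigl(Eq-\overline{Eq}\bigr)+\bigl(n+(n-1)|z|^2\bigr)(q-\bar q)=0,
\]
where $Eq=\sum_k z_k\partial_k q$ is the holomorphic Euler operator. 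Expanding $q=\sum_\alpha c_\alpha z^\alpha$, the purely holomorphic monomials of degree $\geqslant 1$ on the left occur only through $(|\alpha|+n)c_\alpha z^\alpha$ (every term carrying a factor $|z|^2$ has antiholomorphic degree at least one), which forces $c_\alpha=0$ for $|\alpha|\geqslant1$, while matching the constant term forces $c_0$ real. Thus $q\equiv B\in\RR$, whence $\psi=B\log(1+|z|^2)+G$ with $G$ holomorphic and real, so $G$ is a constant $A$, giving $\psi=A+B\log(1+|z|^2)$.

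I expect the main obstacle to be precisely this last step: excluding nonconstant holomorphic $q$. The mechanism is the transcendence of $\log(1+|z|^2)$ against holomorphic/antiholomorphic data, which is made quantitative by the trace identity above and the degree bookkeeping in the Taylor expansion. The case $n=1$, where the coprimeness argument is unavailable, is handled in the same spirit but by expanding $f=\sum_m a_m z^m$ directly in $\psi_{\bar 1}=f/(1+|z|^2)$ and matching coefficients in the integrability and reality relations, which again leaves only $f=a_1 z$ with $a_1$ real.
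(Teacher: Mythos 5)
Your proof is correct, but it takes a genuinely different route from the paper's. The paper omits the argument, referring to the proof of \cref{thm:36}: there one contracts the holomorphicity equation $\partial_{\jba}Z^k=0$ with $\zba_k$ to show that $(1-|z|^2)\psi_{\jba}$ is holomorphic for every $j$, and then runs the variable-by-variable power-series analysis of \cref{thm34} (matching coefficients bidegree by bidegree in $z_k,\zba_k$); the intended proof of the present statement is that same scheme with $1+|z|^2$ in place of $1-|z|^2$. You instead invert the sharp operator to get $\psi_{\kba}=f^{(k)}-\tfrac{z_k}{1+|z|^2}\sum_j\zba_jf^{(j)}$ with each $f^{(j)}$ holomorphic, read off from the symmetry $\psi_{\kba\lba}=\psi_{\lba\kba}$ the relations $z_kf^{(l)}=z_lf^{(k)}$, and use divisibility in the ring of entire functions ($f^{(l)}$ vanishes on $\{z_l=0\}$, hence $f^{(l)}=z_lq$ with one common entire $q$) to reach the structural decomposition $\psi=q\log(1+|z|^2)+G$ with $q,G$ holomorphic \emph{before} reality is ever used; the trace identity obtained by applying $\sum_m\partial_m\partial_{\bar m}$ to $(q-\bar q)\log(1+|z|^2)=\bar G-G$ then forces $q$ to be a real constant by elementary degree bookkeeping. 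This is tidier than the paper's route: it localizes where $n\geqslant 2$ does the work (the coprimality step), avoids the long bidegree case analysis of \cref{thm34}, and makes transparent that reality enters only at the final step. Two caveats: the appeal to Hartogs is unnecessary ($q=f^{(l)}/z_l$ is already entire by the division property once $f^{(l)}$ vanishes on $\{z_l=0\}$); and the case $n=1$ --- which the theorem does cover, this being Hamilton's cigar --- falls outside your coprimality argument. Your sketch for it is sound (cross-differentiating $\psi_{\bar z}=f/(1+|z|^2)$ against $\psi_z=\bar f/(1+|z|^2)$ gives $(1+|z|^2)(f'-\overline{f'})-f\bar z+\bar f z=0$, whose coefficients force $f=a_1z$ with $a_1$ real), but it should be written out in full, whereas the paper's variable-by-variable scheme is uniform in $n$.
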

The proof of this theorem is similar to that of \cref{thm:36} below. We omit the details.
\subsection{Conformally complex hyperbolic metrics}
Combining \cref{prop:31} and Gross and Qian \cite[Theorem~3.4]{grossqian}, we have the following
\begin{proposition}\label{cor:34}
	Let $\mathbb{B}^n$ be the unit ball in $\CC^n$ and let 
	\begin{equation}
	h_{j\overline k} = (1-|z|^2)^{- 1}\left(\delta_{jk} + \frac{\zba_j z_k}{1-|z|^2}\right)
	\end{equation}
	be the complex hyperbolic metric on $\mathbb{B}^n$. Let $g = \phi^{-1} h$ be a conformal metric on $\mathbb{B}^n$. Then $g$ has holomorphic torsion if and only if 
	\begin{equation}
	(1-|z|^2)\phi = \sum_{j,k} c_{j\kba} z^j \zba^k +  \Re \left(\sum_{k} \alpha_k z^k\right) + \gamma
	\end{equation}
	where $c_{j\kba}$ is a Hermitian matrix, $\alpha_k \in \CC$ and $\gamma \in \mathbb{R}$.
\end{proposition}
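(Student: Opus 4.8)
The plan is to reduce the statement to a linear second-order PDE by means of \cref{prop:31}, and then to solve that PDE essentially by hand. By \cref{prop:31}, the conformal metric $g=\phi^{-1}h$ has holomorphic torsion if and only if the vector field $(\bar\partial\phi)^{\sharp}=h^{j\kba}\phi_{\kba}\,\partial_j$ is holomorphic, where now $\sharp$ and $h^{j\kba}$ refer to the complex hyperbolic metric $h$. So the first step is to invert $h$ explicitly. Since
\[
	h_{j\kba}=(1-|z|^2)^{-1}\delta_{jk}+(1-|z|^2)^{-2}\zba_j z_k
\]
is a rank-one perturbation of a multiple of the identity, the Sherman--Morrison formula applies; after the factors $(1-|z|^2)^{-1}+(1-|z|^2)^{-2}|z|^2=(1-|z|^2)^{-2}$ collapse, one obtains
\[
	h^{j\kba}=(1-|z|^2)\bigl(\delta_{jk}-z_j\zba_k\bigr),
	\qquad\text{so}\qquad
	X^j:=h^{j\kba}\phi_{\kba}=(1-|z|^2)\Bigl(\phi_{\jba}-z_j\sum_{k}\zba_k\phi_{\kba}\Bigr).
\]

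The second step is the substitution $\Phi:=(1-|z|^2)\phi$, which is dictated by the form of the asserted answer. Expressing $\phi=\Phi\,(1-|z|^2)^{-1}$ and simplifying, the prefactors cancel and the components reduce to
\[
	X^j=\Phi_{\jba}+z_j\Phi-z_j\sum_{k}\zba_k\Phi_{\kba}.
\]
Applying $\partial_{\lba}$, the first-order terms $z_j\Phi_{\lba}$ produced by the last two summands cancel against each other, leaving the clean identity
\[
	\partial_{\lba}X^j=\Phi_{\jba\lba}-z_j\sum_{k}\zba_k\Phi_{\kba\lba}.
\]
Hence $g$ has holomorphic torsion precisely when $\Phi_{\jba\lba}=z_j\sum_{k}\zba_k\Phi_{\kba\lba}$ for all $j,l$.

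The crux is then a one-line algebraic argument forcing the anti-holomorphic Hessian of $\Phi$ to vanish. Writing $c_l:=\sum_{k}\zba_k\Phi_{\kba\lba}$, the condition reads $\Phi_{\jba\lba}=z_j c_l$; contracting with $\zba_j$ and summing over $j$ gives $c_l=|z|^2 c_l$, that is $(1-|z|^2)c_l=0$, so $c_l\equiv0$ on $\mathbb{B}^n$ and therefore $\Phi_{\jba\lba}\equiv0$ for every $j,l$. By the reality of $\phi$ this is equivalent to $\partial_j\partial_l\Phi=0$ for all $j,l$, i.e. the holomorphic Hessian of $\Phi$ vanishes identically. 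Integrating this system --- equivalently, invoking the solution of Gross and Qian \cite[Theorem~3.4]{grossqian} --- the real-valued solutions are exactly
\[
	\Phi=\sum_{j,k}c_{j\kba}z_j\zba_k+\Re\sum_{k}\alpha_k z_k+\gamma,
\]
with $c_{j\kba}$ Hermitian, $\alpha_k\in\CC$ and $\gamma\in\RR$, which is the claimed form of $(1-|z|^2)\phi$. I expect the only delicate points to be the index bookkeeping in the Sherman--Morrison inversion and in the passage to the $\Phi$-coordinate; once the identity for $\partial_{\lba}X^j$ is in place, the reduction to $\Phi_{\jba\lba}=0$ and the concluding integration are routine.
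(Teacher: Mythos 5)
Your proposal is correct, and it takes a genuinely more self-contained route than the paper. The paper's proof of this proposition is a one-line combination: it invokes \cref{prop:31} to translate holomorphic torsion into holomorphicity of $(\bar\partial\phi)^{\sharp}$, and then cites Gross--Qian's Theorem~3.4, which solves that equation on complex hyperbolic space directly. You also start from \cref{prop:31}, but instead of citing the hyperbolic-space result you reduce it to the flat case by hand: the Sherman--Morrison inversion $h^{j\kba}=(1-|z|^2)(\delta_{jk}-z_j\zba_k)$ is correct, the substitution $\Phi=(1-|z|^2)\phi$ indeed collapses the components to $X^j=\Phi_{\jba}+z_j\Phi-z_j\sum_k\zba_k\Phi_{\kba}$, and the cancellation giving $\partial_{\lba}X^j=\Phi_{\jba\lba}-z_j\sum_k\zba_k\Phi_{\kba\lba}$ checks out. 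Your contraction trick (multiply by $\zba_j$, sum, and use $1-|z|^2>0$ on the ball to kill $c_l$) is the same device the paper uses in the proof of \cref{thm:36}, and it cleanly forces $\Phi_{\jba\lba}\equiv 0$; the final integration then only needs the elementary flat-space fact (real solutions of $\partial_j\partial_l\Phi=0$ are Hermitian-quadratic plus real-linear plus constant), which is exactly what the paper already uses in \cref{prop:33}. What your approach buys is independence from the deeper hyperbolic case of Gross--Qian, at the cost of a page of computation; what the paper's approach buys is brevity. One shared caveat: \cref{prop:31} assumes $n\geqslant 2$, and indeed for $n=1$ the torsion vanishes identically so the equivalence would fail; your proof inherits this implicit restriction exactly as the paper's does, so it is not a gap relative to the paper.
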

\begin{remark}
In \cite{grossqian}, the following example was briefly discussed. For each $\beta \in \RR$, put
\begin{equation}
	h_{j\overline k} = (1-|z|^2)^{\beta - 1}\left(\delta_{jk} + \frac{\zba_j z_k}{1-|z|^2}\right).
\end{equation}
By the Sherman-Morrison formula, we find that the inverse transpose is
\begin{equation}
	h^{k\bar{l}} = (1-|z|^2)^{1-\beta} \left(\delta^{kl} - \zba_ l z_k\right).
\end{equation}
Thus, the torsion tensor takes the following form
\begin{equation}
	T_{jk}^l = \Gamma^l_{jk} -  \Gamma^l_{kj}
	=
	\frac{\beta(\zba_ k \delta^l_j - \zba_ j \delta^l_k)}{1 - |z|^2}
\end{equation}
and $h$ is not K\"ahler, unless $\beta = 0$ or $n=1$. Tracing over the indices $l$ and $k$, we find that
\begin{equation}
	\tau_j = - \frac{\beta (n-1)\zba_ j}{1-|z|^2}.
\end{equation}
Thus, $h$ has holomorphic torsion if and only if $\beta = 0$ (K\"ahler case) or $\beta = 1$. In the latter case, and $h$ is the ``half'' hyperbolic metric which is the only one in this family having holomorphic torsion.
\end{remark}
\begin{theorem}\label{thm:36}
	Let $\psi$ be a function on $\mathbb{B}^n$ with the half hyperbolic metric, then $(\bar{\partial} \psi)^{\sharp}$ is holomorphic if and only if 
	\begin{equation}
		\psi(z) = A + B \log(1-|z|^2),
	\end{equation}
	where $A$ and $B$ are real constants.
\end{theorem}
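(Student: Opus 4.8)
The plan is to reduce the hypothesis to a divisibility property for the components of the gradient field and then to use reality at the very end. By the Sherman--Morrison computation in the remark preceding the statement (the case $\beta=1$), the inverse transpose of the half hyperbolic metric is $h^{j\bar k}=\delta^{jk}-\bar z_k z_j$, so that
\[ (\bar\partial\psi)^{\sharp}=\sum_{j} f^{(j)}\,\frac{\partial}{\partial z_j},\qquad f^{(j)}:=\psi_{\bar j}-z_j\sum_{k}\bar z_k\,\psi_{\bar k}. \]
The hypothesis is then exactly that each $f^{(j)}$ is holomorphic. For $n=1$ the metric coincides with the conformally flat metric $\delta_{jk}/(1-|z|^2)$, so the claim is \cref{thm34}; I would therefore assume $n\geqslant 2$. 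The ``if'' direction I would dispose of by direct substitution: for $\psi=A+B\log(1-|z|^2)$ one gets $\psi_{\bar j}=-Bz_j/(1-|z|^2)$ and hence $f^{(j)}=-Bz_j$, which is holomorphic.

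For the converse, I would set $F:=\sum_k\bar z_k f^{(k)}$. Contracting the definition of $f^{(j)}$ against $\bar z_j$ gives $(1-|z|^2)\sum_k\bar z_k\psi_{\bar k}=F$, so that
\[ \psi_{\bar k}=f^{(k)}+\frac{z_k F}{1-|z|^2},\qquad \partial_{\bar j}F=f^{(j)}, \]
the second identity because the $f^{(k)}$ are holomorphic. The crucial observation is that the closedness $\partial_{\bar l}\psi_{\bar k}=\partial_{\bar k}\psi_{\bar l}$ is \emph{automatic} (it is just $\bar\partial^2\psi=0$); substituting the formula for $\psi_{\bar k}$ and cancelling the common $F$-term collapses it to the strong proportionality $z_k f^{(l)}=z_l f^{(k)}$ for all $k,l$. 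Fixing $j$ and choosing any $l\neq j$, coprimeness of the coordinate functions forces $z_j\mid f^{(j)}$; gluing the quotients $f^{(j)}/z_j$ over $\{z_j\neq 0\}$ and extending across the origin by Hartogs (this is where $n\geqslant 2$ is used) produces a single holomorphic function $\Lambda$ on $\mathbb{B}^n$ with $f^{(j)}=z_j\Lambda$ for every $j$. Substituting back yields $\psi_{\bar k}=z_k\Lambda/(1-|z|^2)$.

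It then remains to use that $\psi$ is real. I would feed $\psi_{\bar k}=z_k\Lambda/(1-|z|^2)$ into the Hermitian symmetry $\psi_{k\bar l}=\overline{\psi_{l\bar k}}$ of the complex Hessian; after clearing denominators this gives, for $k\neq l$, the identity $z_l\bigl((1-|z|^2)\Lambda_k+\bar z_k\Lambda\bigr)=\bar z_k\bigl((1-|z|^2)\overline{\Lambda_l}+z_l\bar\Lambda\bigr)$, where $\Lambda_k:=\partial\Lambda/\partial z_k$. I would then extract from this real-analytic identity the part involving no $\bar z$: on the right every term carries $\bar z_k$ and drops out, while on the left only $z_l\Lambda_k$ survives, forcing $z_l\Lambda_k\equiv0$ and hence $\Lambda_k\equiv 0$ for each $k$; thus $\Lambda$ is a constant $c$. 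Finally, integrating $\bar\partial\psi=c(1-|z|^2)^{-1}\sum_k z_k\,d\bar z_k=-c\,\bar\partial\log(1-|z|^2)$ shows that $\psi+c\log(1-|z|^2)$ is holomorphic, and imposing reality once more forces $c\in\RR$ and the holomorphic remainder to be a real constant, giving $\psi=A+B\log(1-|z|^2)$.

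The main obstacle is the converse, and within it the step of proving the single holomorphic unknown $\Lambda$ is constant. The idea that avoids a term-by-term power-series analysis (such as the one carried out in \cref{thm34}) is twofold: first, that the automatically valid closedness $\bar\partial^2\psi=0$ already imposes $z_k f^{(l)}=z_l f^{(k)}$, collapsing $n$ unknown holomorphic functions to the single $\Lambda$; and second, that reality, in the guise of the Hermitian symmetry of the Hessian, can be read off cleanly just by isolating the purely holomorphic part of one identity. The places I would check most carefully are the division/Hartogs argument and the holomorphic-bidegree projection, since these are exactly where the standing hypotheses $n\geqslant 2$ and real-analyticity are consumed.
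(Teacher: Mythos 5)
Your proof is correct, but it takes a genuinely different route from the paper's. The paper's argument at this spot is very short: differentiating the holomorphicity of $Z^k=\psi_{\bar k}-z_k\sum_l\bar z_l\psi_{\bar l}$, contracting with $\bar z_k$, and substituting back yields $\partial_{\bar k}\bigl[(1-|z|^2)\psi_{\bar j}\bigr]=0$ for all $j,k$ — which is exactly the hypothesis of \cref{thm34} (the conformally flat case) — and the conclusion is then quoted from that theorem, whose own proof is a term-by-term power-series analysis. You, by contrast, invoke \cref{thm34} only for $n=1$ (where the two metrics coincide), and for $n\geqslant 2$ you close the argument self-containedly: the automatic identity $\bar\partial^2\psi=0$ collapses the $n$ holomorphic unknowns $f^{(j)}$ to a single $\Lambda$ with $f^{(j)}=z_j\Lambda$ (the coprimality/division step is sound, though the extension across $\{z_j=0\}$ is just division after vanishing on the coordinate hyperplane — Riemann removable singularity — rather than Hartogs), and then reality, via the Hermitian symmetry of the Hessian and extraction of the purely holomorphic bidegree, forces $\Lambda$ to be constant; the bidegree extraction is legitimate because, after substitution, both sides of your identity are built from the holomorphic $\Lambda$ and are therefore real-analytic, even though $\psi$ is a priori only smooth. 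Note that your intermediate identity $(1-|z|^2)\psi_{\bar k}=z_k\Lambda$ already \emph{is} the hypothesis of \cref{thm34}, so you could have stopped there and quoted that theorem, exactly as the paper does; pressing on buys you independence from the power-series computation of \cref{thm34} for $n\geqslant 2$, plus the finer structural statement $f^{(j)}=z_j\Lambda$, at the cost of a longer argument.
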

\begin{proof} Let $Z = Z^{k} \partial_k = (\bar{\partial} \psi)^{\sharp}$. Since
	\begin{equation}
	h^{k\bar{l}} = \delta^{kl} - \zba_ l z_k.
	\end{equation}
	we have 
	\begin{equation}
		Z^k = h^{k\bar{l}} \psi_{\bar{l}}
			= 
			\psi_{\kba} - z_k \sum_{l=1}^{n} \zba_l \psi_{\bar{l}}.
	\end{equation}
	If $Z$ is holomorphic, then
	\begin{equation}
		0 = \partial_{\jba} Z^k
			=
			\psi_{\kba\jba} - z_k \psi_{\jba}- z_k \sum_{l=1}^{n}\zba_l \psi_{\bar{l}\jba} .
	\end{equation}
	Thus,
	\begin{equation}\label{e:339}
		\psi_{\kba\jba} 
		=
		 z_k \psi_{\jba} + z_k \sum_{l=1}^{n}\zba_l \psi_{\bar{l}\jba}
	\end{equation}
	Multiplying both sides with $\zba_{k}$ and summing over $k$, we obtain
	\begin{equation}
		\sum_{l=1}^{n}\zba_{k} \psi_{\kba\jba} 
		=
		|z|^2 \psi_{\jba} + |z|^2\sum_{l=1}^{n}\zba_l \psi_{\bar{l}\jba}.
	\end{equation}
	Therefore,
	\begin{equation} \label{e:341}
		(1-|z|^2)\sum_{l=1}^{n}\zba_{k} \psi_{\kba\jba}  = |z|^2 \psi_{\jba}.
	\end{equation}
	Combining this with \cref{e:339}, we obtain
	\begin{equation}
		\psi_{\kba\jba} = z_k \left(\psi_{\jba} + \frac{|z|^2}{1-|z|^2}\psi_{\jba}\right)
		=
		\frac{z_k \psi_{\jba}}{1-|z|^2}.
	\end{equation}
	Equivalently,
	\begin{equation}
		\frac{\partial}{\partial \zba_{k}} \left[(1-|z|^2) \psi_{\jba}\right] = 0.
	\end{equation}
	Thus, $\psi$ satisfies the conditions in \cref{thm34}. Consequently,
	\begin{equation}
		\psi(z) = A + B \log (1-|z|^2),
	\end{equation}
	where $A$ and $B$ are real constants. The proof is complete.
\end{proof}
\subsection{Conformally $U(n)$-invariant K\"ahler metrics}

In the sequel, we consider $U(n)$-invariant K\"ahler metrics and radial weights. Suppose that $h_{j\bar{k}}$ is a K\"ahlerian metric induced by a radial potential $h(z) = \tilde{h}(|z|^2)$, where $\tilde{h}(r)$ is a real-valued function of a real variable. Precisely, we have
\begin{equation}
	h_{j\overline{k}} = \partial_{j} \partial_{\overline{k}}\, \tilde{h}(|z|^2)
	=
	\tilde{h}'(|z|^2)\, \delta_{jk} + \tilde{h}''(|z|^2)\,  \overline{z}_j z_k.
\end{equation}
Thus, $h_{j\bar{k}}$ is a rank-one perturbation of a multiple of the identity matrix. For $h_{j\overline{k}}$ to be positive definite, we assume that $\tilde{h}'(r) > 0$ and $r\tilde{h}''(r) + \tilde{h}'(r) > 0$. The Sherman-Morrison formula give the formula for the (transposed) inverse 
\begin{equation}\label{a1}
	h^{k \overline{j}}
	=
	\frac{1}{h'} \left(\delta_{jk} - \frac{\tilde{h}'' z_k \overline{z}_j}{\tilde{h}' + r \tilde{h}''}\right), \quad r = |z|^2,
\end{equation}
so that $h_{\overline{l} k} h^{k\overline{j}} = \delta_{\overline{l}}^{\overline{j}}$, the Kronecker symbol.
\begin{proposition}
	Let $g$ be  the conformally $U(n)$-invariant K\"ahler metric 
	\begin{equation}g_{j\bar k} = e^{\tilde \sigma (|z|^2)} \partial_j \partial_{\bar k} \tilde h(|z|^2)\end{equation}
	and $\psi(z) = \tilde \psi(|z|^2)$ is a real-valued radial weight function. Then $(\bar\partial \psi - \bar\tau)^{\sharp} $ is holomorphic if and only if	
	\begin{equation}\label{conf2}
	\tilde \psi (r) = (n-1) \tilde{\sigma}(r) + C_1 \int_0^r e^{\tilde \sigma (s)}(\tilde h'(s) + s \tilde h''(s))\, ds + \tilde C.
	\end{equation}
	where $C$ and $C_1$ are real constants.
\end{proposition}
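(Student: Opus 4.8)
The plan is to feed the radial ansatz into the formula for the torsion from \cref{prop:31}, raise the index with $g$ using the Sherman--Morrison inverse \eqref{a1}, and then read off the holomorphicity condition as a single ODE for $\tilde\psi$. First I would record that $g=\phi^{-1}h$ with $\phi=e^{-\tilde\sigma(|z|^2)}$, so that in the notation of \cref{prop:31} one has $\sigma=-\log\phi=\tilde\sigma$. By \cref{prop:31} the torsion form is $\tau^g=(n-1)\sum_k\sigma_k\,dz_k$, and since $\sigma_k=\partial_{z_k}\tilde\sigma(|z|^2)=\tilde\sigma'\,\zba_k$, the conjugate $(0,1)$-form $\bar\tau^g$ has components $(n-1)\tilde\sigma'\,z_k$. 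Together with $\psi_{\kba}=\tilde\psi'\,z_k$ this gives
\begin{equation*}
	(\bar\partial\psi-\bar\tau)_{\kba}=\bigl(\tilde\psi'(r)-(n-1)\tilde\sigma'(r)\bigr)z_k=:F(r)\,z_k,\qquad r=|z|^2.
\end{equation*}

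Next I would raise the index with respect to $g$. Because $g_{j\kba}=e^{\tilde\sigma}h_{j\kba}$ we have $g^{k\jba}=e^{-\tilde\sigma}h^{k\jba}$, and the rank-one correction in \eqref{a1} collapses upon contraction with $z$: using $\sum_j\zba_j z_j=r$ one finds the key identity
\begin{equation*}
	\sum_{j}h^{k\jba}z_j=\frac{1}{\tilde h'}\left(z_k-\frac{\tilde h''\,z_k\,r}{\tilde h'+r\tilde h''}\right)=\frac{z_k}{\tilde h'+r\tilde h''}.
\end{equation*}
Hence the sharp of the relevant $(0,1)$-form is the radial multiple of the Euler field,
\begin{equation*}
	(\bar\partial\psi-\bar\tau)^{\sharp_g}=G(r)\sum_{k=1}^n z_k\frac{\partial}{\partial z_k},\qquad G(r):=\frac{F(r)\,e^{-\tilde\sigma(r)}}{\tilde h'(r)+r\tilde h''(r)}.
\end{equation*}

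The last step is the holomorphicity criterion, which supplies both implications at once. The coefficient of $\partial/\partial z_k$ is $G(r)z_k$ with $G$ real-valued and radial, so $\partial_{\zba_j}\bigl(G(r)z_k\bigr)=G'(r)z_jz_k$; this vanishes for all $j,k$ precisely when $G'\equiv0$, i.e. $G\equiv C_1$ for a real constant $C_1$. Unwinding $G=C_1$ gives
\begin{equation*}
	\tilde\psi'(r)-(n-1)\tilde\sigma'(r)=C_1\,e^{\tilde\sigma(r)}\bigl(\tilde h'(r)+r\tilde h''(r)\bigr),
\end{equation*}
and integrating from $0$ to $r$ yields \eqref{conf2} with $\tilde C$ the integration constant.

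The computation is essentially routine, and I do not expect a genuine obstacle; the two points that require care are the index-raising contraction (getting the Sherman--Morrison cancellation right and not forgetting the conformal factor $e^{-\tilde\sigma}$ in $g^{k\jba}$) and the elementary but decisive observation that a real-valued radial factor times $z_k$ is holomorphic exactly when that factor is constant. It is this last observation that converts the holomorphicity of $(\bar\partial\psi-\bar\tau)^{\sharp_g}$ into the single first-order ODE $G\equiv C_1$, and hence into the explicit formula for $\tilde\psi$.
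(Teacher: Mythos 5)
Your proof is correct and follows essentially the same route as the paper: compute $\tau^g$ from the conformal relation via \cref{prop:31}, contract $(\bar\partial\psi-\bar\tau)_{\kba}=(\tilde\psi'-(n-1)\tilde\sigma')z_k$ with the Sherman--Morrison inverse \eqref{a1} (with the conformal factor $e^{-\tilde\sigma}$) to get a real radial multiple of the Euler field, and conclude that holomorphicity forces that multiple to be a constant, yielding the ODE whose integration gives \eqref{conf2}. The only cosmetic difference is that you make explicit the observation that a real-valued radial coefficient times $z_k$ is holomorphic iff constant, which the paper leaves implicit.
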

\begin{proof}
We have
\begin{equation}
	T^{i}_{jk} = \sigma_j \delta^{i}_{k} - \sigma_k \delta^{i}_{j}
	=
	\tilde{\sigma}'\left(\bar{z}_j \delta^i_k - \bar{z}_{k}\delta^i_j\right).
\end{equation}
Then it follows that the torsion $(1,0)$-form of $g_{j\overline k}$ is
\begin{equation}\label{conf1}
	\tau=\tau_k dz^k= (n-1)\tilde{\sigma}'\overline z_k dz^k.
\end{equation}
If $\psi(z) = \tilde{\psi}(r)$, $r = |z|^2$, is a radial weight, then $\partial_{\overline{j}} \psi = \tilde{\psi}'(r) z_j$. For 
\begin{equation}
	(\bar{\partial}\psi -\bar{\tau} )^{\sharp} = g^{j\bar{k}}\left(\psi_{\bar{k}} - \tau_{\bar{k}}\right)\frac{ \partial}{\partial z^j}
\end{equation}
we get 
\begin{equation}
	g^{j\bar{k}}\left(\psi_{\bar{k}} - \tau_{\bar{k}}\right)= \frac{\tilde \psi' - (n-1)\tilde{\sigma}'}
	{e^{\tilde \sigma}(\tilde h' + r \tilde h'')} z_j.
\end{equation}
Therefore $(\bar{\partial}\psi -\bar{\tau} )^{\sharp}$ is holomorphic if and only if
\begin{equation}
	\tilde \psi' = (n-1)\tilde \sigma' + C_1 e^{\tilde \sigma}(\tilde h' + r \tilde h''),
\end{equation}
for some constant $C_1.$ So for another constant $\tilde C$ we have
\begin{equation}\label{conf2}
	\tilde \psi (r) = (n-1) \tilde{\sigma}(r) + C_1 \int_0^r e^{\tilde \sigma (s)}(\tilde h'(s) + s \tilde h''(s))\, ds + \tilde C.
\end{equation}
The proof is complete.
\end{proof}

\begin{example} Considering the unit ball in $\mathbb C^n$ and the hyperbolic metric induced by the potential function $\tilde h (r) = -\log (1-r),$ we get $\tilde h'(r) + r \tilde h''(r) = (1-r)^{-2} $ and 
\begin{equation}\label{conf3}
	\tilde \psi (r) = (n-1) \tilde \sigma (r) + C \int_0^r \frac{e^{\tilde \sigma (s)}}{(1-s)^2}\, ds + C_1.
\end{equation}
Take, for example, $\tilde{\sigma} (r) = \alpha \log (1-r)$, with $\alpha >1$ and 
\begin{equation}
	\tilde{\psi} = \alpha(n-1) \log (1-r) - A (1-r)^{\alpha - 1} + B.
\end{equation}
If $D^{\ast}$ denote the $L^2(M,h,e^{-\psi})$-space adjoint of $\partial$, then $D^{\ast} u $ is holomorphic if $u$ is a holomorphic $(1,0)$-form. However, if $\alpha \ne 0$ and $n \geqslant  3$, then for a holomorphic $(2,0)$-form $v$, $D^{\ast} v$ need not be holomorphic.
\end{example}
\begin{proposition}
	Let $\phi$ be a  radial positive function on $\CC^n$ ($n\geqslant 2$), $\phi(z) = \tilde{\phi}(|z|^2)$. The Hermitian metric $g_{j\bar k} := \phi^{-1} (|z|^2)\partial_j \partial_{\bar k} \tilde h(|z|^2) $
	has holomorphic torsion if and only if
	\begin{equation}
	\tilde{\phi}(r) = A + Br \tilde{h}'(r),
	\end{equation}
	where $A$ and $B$ are two real constants.
\end{proposition}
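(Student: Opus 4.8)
The plan is to reduce the statement to the already-established characterization of holomorphic torsion and then solve an elementary ODE. By \cref{prop:31}, applied to the $U(n)$-invariant Kähler metric $h_{j\bar k} = \partial_j\partial_{\bar k}\tilde h(|z|^2)$, the conformal metric $g = \phi^{-1}h$ has holomorphic torsion if and only if $(\bar\partial\phi)^{\sharp}$ is holomorphic, where $\sharp$ is taken with respect to $h$. So it suffices to compute this vector field explicitly and determine when it is holomorphic.

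First I would use the radiality of $\phi$: since $\phi = \tilde\phi(|z|^2)$ we have $\phi_{\bar k} = \tilde\phi'(r)\, z_k$ with $r = |z|^2$. Combining this with the Sherman--Morrison formula \cref{a1} for $h^{j\bar k}$, a short computation gives
\begin{equation*}
	(\bar\partial\phi)^{\sharp} = \sum_{j,k} h^{j\bar k}\phi_{\bar k}\,\partial_j = \frac{\tilde\phi'(r)}{\tilde h'(r) + r\tilde h''(r)}\sum_{j=1}^n z_j\,\partial_j,
\end{equation*}
where the contraction $\sum_k h^{j\bar k} z_k = z_j/(\tilde h' + r\tilde h'')$ follows because the rank-one term in $h^{j\bar k}$ contributes $-\tilde h'' r z_j/\big(\tilde h'(\tilde h' + r\tilde h'')\big)$, which combines with the diagonal term $z_j/\tilde h'$ to give the claimed simplification.

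Next I would invoke the same reasoning used in the proof of \cref{thm:21}: the coefficient $c(r) := \tilde\phi'(r)/(\tilde h'(r) + r\tilde h''(r))$ is a real-valued function of $r$, and $(\bar\partial\phi)^{\sharp}$ has components $c(|z|^2)z_j$. Since $\partial_{\bar k}\big(c(|z|^2)z_j\big) = c'(|z|^2)\,z_k z_j$, holomorphicity of all components forces $c' \equiv 0$, i.e. $c$ is a real constant $B$. Thus $g$ has holomorphic torsion if and only if
\begin{equation*}
	\tilde\phi'(r) = B\big(\tilde h'(r) + r\tilde h''(r)\big) = B\,\frac{d}{dr}\big(r\tilde h'(r)\big).
\end{equation*}
Integrating in $r$ yields $\tilde\phi(r) = A + B\,r\tilde h'(r)$ for real constants $A, B$, which is the desired form; conversely, any such $\tilde\phi$ makes $c \equiv B$ constant and hence $(\bar\partial\phi)^{\sharp}$ holomorphic.

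There is no serious obstacle here: the argument is a direct specialization of \cref{prop:31} together with the linear-algebra identity \cref{a1}. The only point that requires care is the contraction $\sum_k h^{j\bar k}z_k$, where one must track the rank-one correction correctly; once that simplifies to $z_j/(\tilde h' + r\tilde h'')$, the rest is the standard observation that a real radial factor times $z_j$ is holomorphic if and only if it is constant, followed by a one-line integration. One should also record that the positivity assumptions on $h$ (namely $\tilde h' > 0$ and $\tilde h' + r\tilde h'' > 0$) guarantee the denominator never vanishes, so $c$ is well defined on all of $\CC^n$ and the equivalence is global.
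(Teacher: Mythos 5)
Your proof is correct and follows essentially the same route as the paper: reduce via \cref{prop:31} to the holomorphicity of $(\bar\partial\phi)^{\sharp}$, compute $h^{j\bar k}\phi_{\bar k} = \tilde\phi'(r)z_j/(\tilde h'(r)+r\tilde h''(r))$ using the Sherman--Morrison formula, observe that a real radial coefficient times $z_j$ is holomorphic only if constant, and integrate $\tilde\phi' = B(r\tilde h')'$. Your write-up merely makes explicit two steps the paper leaves as ``direct computation,'' namely the rank-one contraction and the $\partial_{\bar k}$ argument forcing the coefficient to be constant.
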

\begin{proof}
	From \cref{prop:31}, $g$ has holomorphic torsion if and only if $(\bar\partial \phi)^{\sharp}$ is holomorphic. By direct computation,
	\begin{equation}
	h^{j\kba} \phi_{\kba}
	=
	\frac{\tilde{\phi}'(r) z_j}{\tilde{h}'(r) + r \tilde{h}''(r)}, \quad r = |z|^2. 
	\end{equation}
	This is holomorphic for all $l$ if and only if $\tilde{\phi}'(r)/(\tilde{h}'(r) + r \tilde{h}''(r))$ is constant:
	\begin{equation}
	\tilde{\phi}' = B (\tilde{h}' + r \tilde{h}'') = B (r \tilde{h}')'.
	\end{equation}
	Integrating this we complete the proof.
\end{proof}
Hence, the conformally $U(n)$-invariant K\"ahler metric 
\begin{equation}\label{e:un}
	g_{j\bar k} = e^{\tilde \sigma (|z|^2)} \partial_j \partial_{\bar k} \tilde h(|z|^2)
\end{equation}
has holomorphic torsion if and only if 
\begin{equation}
	\tilde \sigma (r) = -\log ( C_2r \tilde h'(r)+C_3),
\end{equation}
where the constant $C_3$ has to be chosen such that $C_2r \tilde h'(r)+C_3 >0.$

This also determines the weight function $\psi:$ we use \eqref{conf2} and get
\begin{equation}\label{a4}
\tilde \psi (r) =- C_4\log(C_2 r\tilde h' (r)+C_3)+ C_5,
\end{equation}
where $C_4= n-1 - (C_1/C_2).$

With this choice of $\tilde \sigma$ and $\tilde \psi$ we get for a $(1,0)$-form $u=u_jdz^j \in {\text{dom}}(\partial^*) $ that
\begin{equation}
	\partial^* u = C_1 \sum_{j=1}^n z_j dz^j,
\end{equation}
and for a $(2,0)$-form $v=v_{pq} dz^p \wedge dz^q \in {\text{dom}}(\partial^*) $ that
\begin{equation}
	\partial^*v = -(C_1- C_2)\sum_{q=1}^n z^q v_{pq}dz^p.
\end{equation}
Finally we have shown the following 

\begin{theorem}\label{determined}
Let $g$ be  the conformally $U(n)$-invariant K\"ahler metric given as in \cref{e:un}
together with an radial real-valued weight function $\psi(z) = \tilde \psi(|z|^2)$.
The vector field $(\bar{\partial}\psi -\bar{\tau} )^{\sharp}$ and the torsion operator $T^{\sharp}$ are holmorphic if and only if
\begin{equation}
	\tilde \sigma (r) = -\log ( C_2r \tilde h'(r)+C_3)
\end{equation}
and
\begin{equation}
	\tilde \psi (r) = - C_4 \log(C_2 r\tilde h' (r)+C_3)+ C_5,
\end{equation}
where $C_4=  n-1-(C_1/C_2)$ and the constant $C_3$ has to be chosen such that $C_2r \tilde h'(r)+C_3 >0.$

In this case we have for the vector field $(\bar{\partial}\psi -\bar{\tau} )^{\sharp}= C_1 \sum_{j=1}^n z^j\partial_j$ 
and for the torsion operator 
$$T^{\sharp}(v) = -C_2\sum_{q=1}^n z^q v_{pq}dz^p.$$
\end{theorem}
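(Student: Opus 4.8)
The plan is to read this theorem as a synthesis of the two propositions immediately preceding it together with the explicit operator formulas already recorded above its statement; consequently the proof reduces to imposing the two holomorphicity hypotheses separately and carrying out a single integration. The hypotheses — holomorphicity of $(\bar\partial\psi-\bar\tau)^{\sharp}$ and holomorphicity of the torsion operator $T^{\sharp}$ — constrain different data ($\tilde\psi$ versus the conformal factor $\tilde\sigma$), so they can be imposed one at a time. First I would observe that, since $T^{\sharp}(v)=\tfrac12 T_p{}^{rs}v_{rs}\,dz^p$, the operator $T^{\sharp}$ is holomorphic precisely when $g$ has holomorphic torsion; by \cref{prop:31} together with the radial computation of the preceding proposition this is equivalent to $e^{-\tilde\sigma}=\tilde\phi=C_2 r\tilde h'(r)+C_3$, that is
\[
\tilde\sigma(r)=-\log\bigl(C_2 r\tilde h'(r)+C_3\bigr),
\]
with $C_3$ chosen so that $\tilde\phi>0$.

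Next I would impose the first condition. The $U(n)$-invariant proposition at the start of the subsection already gives that $(\bar\partial\psi-\bar\tau)^{\sharp}$ is holomorphic if and only if (cf. \cref{conf2})
\[
\tilde\psi(r)=(n-1)\tilde\sigma(r)+C_1\int_0^r e^{\tilde\sigma(s)}\bigl(\tilde h'(s)+s\tilde h''(s)\bigr)\,ds+\tilde C .
\]
The one genuine computation is to evaluate this integral once $\tilde\sigma$ is pinned down by the torsion condition. Writing $\tilde\phi=C_2 r\tilde h'+C_3$, so that $\tilde\phi'=C_2(\tilde h'+r\tilde h'')$ and $e^{\tilde\sigma}=\tilde\phi^{-1}$, the integrand collapses to a logarithmic derivative,
\[
e^{\tilde\sigma}(\tilde h'+s\tilde h'')=\frac{\tilde\phi'}{C_2\,\tilde\phi}=\frac{1}{C_2}\frac{d}{ds}\log\tilde\phi(s),
\]
so the integral equals $\tfrac{C_1}{C_2}\log\tilde\phi(r)$ up to an additive constant. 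Combining with $(n-1)\tilde\sigma=-(n-1)\log\tilde\phi$ yields
\[
\tilde\psi(r)=-\Bigl(n-1-\tfrac{C_1}{C_2}\Bigr)\log\bigl(C_2 r\tilde h'(r)+C_3\bigr)+C_5=-C_4\log\bigl(C_2 r\tilde h'(r)+C_3\bigr)+C_5 ,
\]
which is the asserted form with $C_4=n-1-C_1/C_2$.

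Finally I would record the two explicit operators. The identity $(\bar\partial\psi-\bar\tau)^{\sharp}=C_1\sum_j z^j\partial_j$ is immediate from the proof of that same $U(n)$-invariant proposition, where the scalar coefficient $g^{j\kba}(\psi_{\kba}-\tau_{\kba})/z_j$ is forced to equal the constant $C_1$. For $T^{\sharp}$ I would start from \cref{e:33}, substitute $\sigma_{\kba}=\tilde\sigma' z_k$ and the inverse metric $h^{r\kba}$ from the Sherman--Morrison formula \cref{a1}, use the contraction $h^{r\kba}z_k=z_r/(\tilde h'+r\tilde h'')$, and simplify $e^{-\tilde\sigma}\tilde\sigma'=-C_2(\tilde h'+r\tilde h'')$ to obtain $T_p{}^{rs}=-C_2\bigl(z_r\delta_p^s-z_s\delta_p^r\bigr)$; inserting this into $T^{\sharp}(v)=\tfrac12 T_p{}^{rs}v_{rs}\,dz^p$ and antisymmetrizing in $r,s$ produces the stated torsion operator. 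As a consistency check one can verify, via \cref{e:1243} with the projection acting trivially on holomorphic forms, that these two pieces combine into $\partial^{\ast}v=-(C_1-C_2)\sum_q z^q v_{pq}\,dz^p$.

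I expect the only delicate point to be the index bookkeeping in this last step: antisymmetrizing $(z_r\delta_p^s-z_s\delta_p^r)v_{rs}$ by means of $v_{rs}=-v_{sr}$ produces a factor of $2$ that must be tracked against the $\tfrac12$ in the definition of $T^{\sharp}$, and the sign coming from $\tilde\sigma=-\log\tilde\phi$ must be carried consistently so that the constant $C_2$ appearing in the torsion operator is exactly the one appearing in $\tilde\sigma$. Everything else is a direct appeal to the previously established equivalences.
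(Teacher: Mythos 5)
Your strategy coincides with the paper's own: there the theorem is introduced with ``Finally we have shown the following,'' i.e., it is exactly the synthesis you describe --- \cref{prop:31} together with the radial torsion proposition pins down $\tilde\sigma=-\log(C_2 r\tilde h'+C_3)$, substituting into \cref{conf2} turns the integrand into $\frac{1}{C_2}\frac{d}{ds}\log\bigl(C_2 s\tilde h'(s)+C_3\bigr)$ so that $\tilde\psi=-C_4\log(C_2 r\tilde h'+C_3)+C_5$ with $C_4=n-1-C_1/C_2$, and the identification $(\bar\partial\psi-\bar\tau)^{\sharp}=C_1\sum_j z^j\partial_j$ is read off from the proof of that proposition. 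All of this part of your argument is correct and is the paper's argument.

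The one genuine problem is your final claim about $T^{\sharp}$. Your intermediate formula $T_p{}^{rs}=-C_2\bigl(z_r\delta_p^s-z_s\delta_p^r\bigr)$ is right, but contracting it does \emph{not} give the operator displayed in the theorem: since $v_{rp}=-v_{pr}$,
\begin{equation*}
\tfrac12\, T_p{}^{rs}v_{rs}\,dz^p
=-\tfrac{C_2}{2}\Bigl(\sum_r z_r v_{rp}-\sum_s z_s v_{ps}\Bigr)dz^p
=+\,C_2\sum_q z_q v_{pq}\,dz^p ,
\end{equation*}
which is the opposite sign to the stated $-C_2\sum_q z^q v_{pq}\,dz^p$. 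Your own consistency check exposes this: combining the weight term $-C_1\sum_q z_q v_{pq}\,dz^p$ with $T^{\sharp}(v)$ yields $\partial^{\ast}v=-(C_1-C_2)\sum_q z_q v_{pq}\,dz^p$ only if the sign is $+C_2$; with $-C_2$ you would get $-(C_1+C_2)$. The $+C_2$ sign is also the one consistent with Section 4 of the paper, where for the half hyperbolic metric ($\tilde h(r)=-\log(1-r)$, hence $C_2=C_3=1$) the torsion operator is computed in \cref{e1} and the display following it as $T^{\sharp}(v)=+\sum_s z_s v_{ps}\,dz_p$. In short, the sign in the theorem's displayed formula for $T^{\sharp}$ appears to be an error in the paper itself; a careful execution of your derivation detects and corrects it, so you should state the $+C_2$ formula and flag the discrepancy, rather than assert that your computation ``produces the stated torsion operator,'' which, as written, it does not.
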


\section{The $\partial$-complex on the unit ball with the half hyperbolic metric}
\subsection{The half hyperbolic metric on the unit ball}
Consider the half hyperbolic metric on the unit ball  $\mathbb{B}^n \subset \CC^n$ given in the ``standard'' coordinate by
\begin{equation}
	h_{j\kba} = \delta_{jk} + \frac{\zba_j z_k}{1-|z|^2},
\end{equation}
If $g_{j\kba} = -\partial_j \partial_{\kba} \log (1-|z|^2)$ is the complex hyperbolic metric, then $h_{j\kba} = (1-|z|^2)g_{j\kba}$, i.e., $h$ is conformally K\"ahler.

We have,
\begin{equation}
	\partial_i h_{j\bar{l}} = \frac{\zba_j}{1-|z|^2}\left(\delta_{il} + \frac{\zba_i z_l}{1-|z|^2}\right),
\end{equation}
and therefore,
\begin{equation}
	\Gamma^k_{ij} = h^{k\bar{l}} \partial_{i} h_{j\bar{l}} =  \frac{\zba_j \delta_{ik}}{1-|z|^2}.
\end{equation}
Thus, the curvature of $h$ is
\begin{align}
	R_{i\jba k \lba}
	& =
	- h_{p\lba} \partial_{\jba} \Gamma^p_{ik} \notag \\
	& =
	-\frac{1}{1-|z|^2} \left(\delta_{il}\delta_{jk} + \frac{\delta_{jk} \zba_i z_l}{1-|z|^2} + \frac{\delta_{il} \zba_k z_j }{1-|z|^2} + \frac{\zba_i z_j \zba_k z_l}{(1-|z|^2)^2}\right) \notag \\
	& = 
	-\frac{h_{i\lba} h_{k\jba}}{1-|z|^2}.
\end{align}
Thus, the half hyperbolic metric has negative pointwise constant holomorphic sectional curvature

\begin{equation}
	K(\xi) \bigl|_z = \frac{R_{i\jba k \lba} \xi^i \xi^{\jba}\xi^k \xi^{\lba}}{|\xi|^4} \biggl|_z= -\frac{1}{1-|z|^2}, \quad \text{for}\ \xi = \xi^j \partial_j \in T^{(1,0)}_z(M),
\end{equation}
which is unbounded on $\mathbb{B}^n$. The curvature satisfies additional symmetry
\begin{equation}
	R_{i\jba k \lba} = R_{k\lba i \jba},
\end{equation}
and thus the first two Chern-Ricci curvatures are equal:
\begin{align}
	R^{(1)}_{i\jba}: = h^{k\lba}R_{i\jba k \lba} = -\frac{1}{1-|z|^2} h_{i\jba},\\
	R^{(2)}_{k\lba}: = h^{i\jba}R_{i\jba k \lba} = -\frac{1}{1-|z|^2} h_{k\lba},
\end{align}
and the third Chern-Ricci curvature is
\begin{align}
	R^{(3)}_{k\jba}: = h^{i\lba}R_{i\jba k \lba} = -\frac{n}{1-|z|^2} h_{k\jba}.
\end{align}
The half hyperbolic metric is (weak) Chern-Einstein with two different unbounded and negative Chern scalar curvatures
\begin{equation}
	s: = h^{i\jba}R^{(1)}_{i\jba} = -\frac{n}{1-|z|^2},
	\quad
	\hat{s}:=h^{k\jba} 	R^{(3)}_{k\jba} = -\frac{n^2}{1-|z|^2}.
\end{equation}
Using \cref{e:33}, we find that
\begin{equation}\label{e1}
T_p{}^{rs}
=
z_s \delta^r_p - z_r \delta^s_p
\end{equation}
is holomorphic. Furthermore,
\begin{equation}\label{e2}
\bar{\tau}^{\sharp}
=
-(n-1) \sum_{k=1}^n z_k \frac{\partial}{\partial z_k}
\end{equation}
is also holomorphic.
\subsection{The $\partial$-complex}
\cref{thm:36} suggests that we should choose the weight function
\begin{equation}
\psi(z) =  \alpha \log(1-|z|^2),
\end{equation}
whose gradient is real holomorphic. Since
\begin{equation}
\det [h_{j\kba}] = \frac{1}{1-|z|^2}
\end{equation}
the weighted measure is
\begin{equation}
	e^{-\psi} \dvol_h
	=
	(1-|z|^2)^{-\alpha-1} d\lambda.
\end{equation}
Then the corresponding Bergman space
\begin{equation}
A^2_{(0,0)}(\mathbb{B}^n,h,e^{-\psi})
:=
\left\{f \in \mathcal{O}(\mathbb{B}^n) \colon \|f\|^2:=\int_{\mathbb{B}^n} |f|^2 (1-|z|^2)^{-\alpha - 1} d\lambda < \infty \right\}
\end{equation}
is the ``usual'' Bergman space $A^2_{-\alpha-1}(\mathbb{B}^n)$ in the ball with parameter $-\alpha - 1$, which is of infinite dimension if $\alpha < 0$. We thus assume that $\alpha< 0$ from now on.

For $u = \sum_{k=1}^nu_{k} dz_k$, we have
\begin{equation}
|u|^2_h
:=
u_j u_{\kba} h^{j\kba} = \sum_{k=1}^n |u_k|^2 - \left|\sum_{k=1}^n z_k u_k\right|^2.
\end{equation}
The Bergman space $A^2_{(1,0)}(\mathbb{B}^n, h, e^{-\psi})$ consists of $(1,0)$-forms with holomorphic coefficients $u = \sum_{j=1}^n u_j dz_j$ such that
\begin{equation}
\|u\|^2
:=
\int_{\mathbb{B}^n} \left(\sum_{k=1}^n |u_k|^2 - \left|\sum_{k=1}^n z_k u_k\right|^2\right) (1-|z|^2)^{-\alpha -1} d\lambda < \infty.
\end{equation}

Since the restrictions of polynomials onto $\mathbb{B}^n$ are dense in each Bergman spaces $A^2(\mathbb{B}^n, (1-|z|^2)^{\gamma})$ for $\gamma > -1$, the polynomials as well as $(p,0)$-forms with polynomial coefficients are dense in the respective Bergman spaces. Thus $\partial$-operator is densely defined in $A^2_{(p,0)}(\mathbb{B}^n, h, e^{-\psi})$ for each $0\leqslant p \leqslant n$.

Observe that
\begin{equation}
(\bar\partial\psi)^{\sharp} = -\alpha \sum_{k=1}^n z_k\frac{\partial}{\partial z_k},
\end{equation}
is holomorphic, and by \cref{e2} we have that 
\begin{equation}
(\bar{\partial} \psi - \bar{\tau})^{\sharp}
=
(n-1-\alpha ) \sum_{j=1}^n z_j \partial_j.
\end{equation}
This, together with an integration by parts argument, gives the formula for $\partial^{\ast}$:
\begin{proposition}\label{prop:41}
	Let $u = u_j dz_j \in A^2_{(1,0)}(\mathbb{B}^n,h,e^{-\psi})$. If $\sum_{k=1}^n u_k z_k \in A^2_{(0,0)}(\mathbb{B}^n,h,e^{-\psi})$, then $u\in \dom(\partial^{\ast})$ and
\begin{equation}
\partial^* u = (n-1-\alpha )\sum_{j=1}^n z_j u_j.
\end{equation}
\end{proposition}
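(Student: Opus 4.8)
The plan is to verify directly the adjoint identity
\begin{equation*}
\langle \partial f, u\rangle = (n-1-\alpha)\,\Bigl\langle f, \textstyle\sum_{j} z_j u_j\Bigr\rangle
\end{equation*}
for $f$ ranging over the restrictions of polynomials, which form a core for $\partial$ in $A^2_{(0,0)}(\mathbb{B}^n,h,e^{-\psi})$, and then to feed in the hypothesis $g:=\sum_j z_j u_j\in A^2_{(0,0)}$. Once the identity holds on a core, the right-hand side is bounded by $|n-1-\alpha|\,\|g\|\,\|f\|$ via Cauchy--Schwarz, so $f\mapsto\langle\partial f,u\rangle$ extends to a bounded functional on $\dom(\partial)$; this is exactly the statement $u\in\dom(\partial^{\ast})$, and the Riesz representative is $\partial^{\ast}u=(n-1-\alpha)g$. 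Throughout I abbreviate $\mu:=(1-|z|^2)^{-\alpha-1}$, so that $e^{-\psi}\dvol_h=\mu\,d\lambda$, and I use the inverse $h^{j\kba}=\delta_{jk}-z_j\zba_k$ recorded earlier, which gives $\langle\partial f,u\rangle_h=\sum_j \frac{\partial f}{\partial z_j}\overline{u_j}-\bigl(\sum_j z_j\frac{\partial f}{\partial z_j}\bigr)\bar g$.

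First I would integrate over the exhausting balls $B_r=\{|z|<r\}$, where all integrands are smooth and bounded and integration by parts is unproblematic. Since the $u_j$ and hence $g$ are holomorphic, $\partial_{z_j}\overline{u_j}=0$ and $\partial_{z_j}\bar g=0$, while $\partial_{z_j}\mu=(\alpha+1)\frac{\zba_j}{1-|z|^2}\mu$. The decisive algebraic step is the pointwise identity
\begin{equation*}
\langle\partial f,u\rangle_h\,\mu=\sum_{j}\frac{\partial}{\partial z_j}\Bigl[(f\overline{u_j}-f z_j\bar g)\mu\Bigr]+(n-1-\alpha)\,f\bar g\,\mu,
\end{equation*}
where the two singular contributions proportional to $(1-|z|^2)^{-1}$ — one from differentiating $\mu$ against $\overline{u_j}$, the other from the Euler term $z_j\partial_{z_j}f$ — cancel \emph{pointwise}, using $\sum_j|z_j|^2=|z|^2$ and $(|z|^2-1)/(1-|z|^2)=-1$. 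This cancellation is what makes the argument work even for $-1\le\alpha<0$, where the individual singular integrals need not converge.

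Next I would apply the complex divergence theorem $\int_{B_r}\partial_{z_j}\Phi\,d\lambda=\frac{1}{2r}\int_{\partial B_r}\Phi\,\zba_j\,dS$ (the outward normal on $|z|=r$ contributes $\zba_j/r$). Summing the divergence terms and using $\sum_j\overline{u_j}\zba_j=\bar g$ and $\sum_j z_j\zba_j=|z|^2=r^2$ on $\partial B_r$, the total boundary contribution collapses to
\begin{equation*}
\frac{1}{2}\Bigl(\frac1r-r\Bigr)\int_{\partial B_r}f\bar g\,\mu\,dS=\frac{(1-r^2)^{-\alpha}}{2r}\int_{\partial B_r}f\bar g\,dS,
\end{equation*}
since $\mu|_{\partial B_r}=(1-r^2)^{-\alpha-1}$ and $(1-r^2)\cdot(1-r^2)^{-\alpha-1}=(1-r^2)^{-\alpha}$. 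Letting $r\to1$, the left-hand side tends to the genuine Bergman pairing $\langle\partial f,u\rangle$ (finite because $\partial f,u\in A^2_{(1,0)}$), the interior term tends to $(n-1-\alpha)\int_{\mathbb{B}^n}f\bar g\,\mu\,d\lambda$ (finite because $f,g\in A^2_{(0,0)}$ so $f\bar g\mu\in L^1$), and the boundary term vanishes precisely because $\alpha<0$ forces $(1-r^2)^{-\alpha}\to0$ while $\int_{\partial B_r}f\bar g\,dS$ stays bounded. The main obstacle is thus this boundary analysis: the naive termwise boundary integrals do \emph{not} vanish, and one must exploit both the pointwise cancellation of the singular interior parts and the exact combination $\frac1r-r$ together with the sign of $\alpha$ to kill the boundary in the limit. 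Passing the resulting identity from polynomials to all of $\dom(\partial)$ along a graph-norm approximating sequence then completes the proof.
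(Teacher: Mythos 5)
Your computational skeleton is sound --- I checked the pointwise divergence identity (the $(1-|z|^2)^{-1}$-singular terms do cancel, leaving $(\alpha+1)f\bar{g}\mu$ and hence the constant $n-1-\alpha$), the complex divergence theorem with the factor $\zba_j/(2r)$, and the collapse of the boundary integrand to $(1-r^2)f\bar{g}\mu$ --- and in spirit this is the same integration-by-parts proof as the paper's, which uses a smooth cutoff $\chi_R(|z|^2)$ with $|\chi_R'|\leqslant 2/(1-R)$ instead of your sharp truncation to $B_r$ (so no boundary terms arise, and the $\chi_R'$-error is dominated by $2\int_{R<|z|<1}|g|\,|v|\,(1-|z|^2)^{-\alpha-1}\,d\lambda$, the tail of an $L^1$ function). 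However, your write-up has two genuine gaps. The structural one: you prove the identity $\langle\partial f,u\rangle=(n-1-\alpha)\langle f,g\rangle$ only for polynomials $f$, and then pass to all of $\dom(\partial)$ by invoking the claim that polynomials form a \emph{core} for $\partial$, i.e.\ are dense in the graph norm. That claim is nowhere proved, and it does not follow from what the paper establishes (Section 4 only records that polynomials are dense in the Bergman spaces, which is density in the Hilbert-space norm, not in the graph norm of the maximal operator). Membership $u\in\dom(\partial^{\ast})$ requires $|\langle\partial v,u\rangle|\leqslant C\|v\|$ for \emph{every} $v\in\dom(\partial)$; testing on a merely norm-dense subspace is not enough. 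The core property is plausibly true here (e.g.\ via dilations $v_t(z)=v(tz)$ and orthogonality of homogeneous components, using that the metric and weight are $U(n)$-invariant), but it is a substantive lemma you must supply; the paper sidesteps it entirely by running its cutoff computation for an arbitrary $v\in\dom(\partial)$ from the outset.

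The local gap: the vanishing of your boundary term rests on the bare assertion that $\int_{\partial B_r}f\bar{g}\,dS$ stays bounded as $r\to1$. This is true when $f$ is a polynomial, but for a non-obvious reason: distinct homogeneous components of $f$ and $g$ are orthogonal over spheres, so the pairing truncates to finitely many diagonal terms, each a constant multiple of $r^{2k+2n-1}$. It is \emph{not} a consequence of any absolute-value estimate, since $\int_{\partial B_r}|f|\,|g|\,dS$ can be unbounded for $g\in A^2_{-\alpha-1}(\mathbb{B}^n)$ (already for $f\equiv1$ and a suitable lacunary $g$ in the disc); only the mean-value/orthogonality structure saves the claim, and you should say so. This subtlety also blocks the natural repair of the first gap: if you rerun the argument with a general $f\in\dom(\partial)$ in place of a polynomial, the boundedness claim genuinely fails, and you would instead have to note that the boundary term converges as $r\to1$ (being the difference of two convergent integrals of $L^1$ functions) and that it tends to zero along some sequence $r_k\to1$ because $\int_0^1\bigl(\int_{\partial B_r}|f\bar{g}|\,dS\bigr)(1-r^2)^{-\alpha-1}\,dr<\infty$, whence its limit is zero --- or control spherical means via their monotonicity. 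Either patch is routine, but as written the proof is incomplete at both places.
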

\begin{proof} The proof is essentially an integration by parts argument. But the metric $h$ is not complete and thus we need to verify the vanishing of the ``boundary'' term directly. Let $\chi_R$ ($0 < R < 1$) be a smooth family of smooth functions of a real variable such that $\chi_R \equiv 1$ on $(-\infty, R]$, the support of $\chi_R$ is contained in $(-\infty, 1)$, and $|\chi'_R| < 2/(1-R)$. By abuse of notation we write $\chi_R(z) = \chi_R(|z|^2)$, so that $\partial \chi_R/\partial \zba_k = \chi_R'(|z|^2) z_k$.

Let  $v \in A^2_{(0,0)}(\mathbb{B}^n,h,e^{-\psi})$, then by integration by parts,
\begin{align}\label{e:last}
\left(\chi_R u ,\partial v\right)_{L^2(\mathbb{B}^n,h,\psi)}
& =
\int_{\mathbb{B}^n} h^{j\kba} \chi_R u_j \overline{v_k} e^{-\psi} \dvol_h \notag  \\
& = 
\int_{\mathbb{B}^n} \sum_{k=1}^n  \overline{v_k}\left(u_k - \zba_k \sum_{j=1}^n u_j z_j\right) \chi_R(|z|^2) (1-|z|^2)^{-1-\alpha} d\lambda \notag \\
& = 
\int_{\mathbb{B}^n} \overline{v}\sum_{k=1}^n   \frac{\partial}{\partial \zba_{k}}\left(\left(u_k - \zba_k \sum_{j=1}^n u_j z_j\right) \chi_R(|z|^2) (1-|z|^2)^{-1-\alpha}\right) d\lambda \notag \\
& =
(n-\alpha -1)\int _{\mathbb{B}^n} \left(\sum_{k} u_k z_k\right) \chi_R(|z|^2) \overline{v} (1-|z|^2) ^{-\alpha -1} d\lambda \notag \\
& \quad - \int _{\mathbb{B}^n} \overline{v}\left(\sum_{k} u_k z_k\right) \chi_R'(|z|^2)  (1-|z|^2)^{-\alpha} d\lambda .
\end{align}
Since $\chi_R'(|z|^2) = 0$ for $|z|^2 < R$ and $\chi_R'(|z|^2) < 2(1-|z|^2)^{-1}$ for $0\leqslant |z| <1$, we can estimate the last integral as follows:
\begin{align}\label{e:a}
\left|\int\limits _{\mathbb{B}^n} \overline{v}\left(\sum_{k} u_k z_k\right) \chi_R'(|z|^2)  (1-|z|^2)^{-\alpha} d\lambda \right|\leqslant
2\int\limits_{R < |z| < 1} \left|\sum_{k} u_k z_k\right|  |v| (1-|z|^2)^{-\alpha-1} d\lambda.
\end{align}
On the other hand, since both $\sum_{k} u_kz_k$ and $v$ belong to $A^2_{(0,0)}(\mathbb{B}^n,h,e^{-\psi}) = A^2_{-\alpha-1}(\mathbb{B}^n)$, the ``standard'' weighted Bergmann space in the ball with weight $(1-|z|^2)^{-\alpha-1}$, the H\"older inequality implies that
\begin{equation}
	\int_{\mathbb{B}^n} \left|\sum_{k} u_k z_k\right|  |v| (1-|z|^2)^{-\alpha-1} d\lambda
	\leqslant \left\| \sum_{k} u_k z_k \right\|_{A^2_{-\alpha-1}(\mathbb{B}^n)} \cdot \|v\|_{A^2_{-\alpha-1}(\mathbb{B}^n)} < \infty.
\end{equation}
This implies that the right-hand side (and hence both sides) of \cref{e:a} tends to $0$ as $R \to 1^{-}$. Letting $R \to 1^{-}$ in \cref{e:last}, using the denominated Lebesgue convergence theorem, we obtain
\begin{align}
	\left(u ,\partial v\right)_{h,\psi}
	& =
	(n-\alpha -1)\int _{\mathbb{B}^n} \overline{v} \left(\sum_{k} u_k z_k\right) (1-|z|^2) ^{-\alpha -1} d\lambda \notag \\
	& =
	\left((n-\alpha -1)\sum_{k} u_k z_k , v \right)_{h,\psi}.
\end{align}
Consequently, the map $v \mapsto \left(u ,\partial v\right)_{h,\psi}$ is continuous and thus $u \in \dom(\partial^{\ast})$. Moreover,
\begin{equation}
\partial^{\ast} u = (n-\alpha - 1)\sum_{k} u_k z_k.
\end{equation}
The proof is complete.
\end{proof}
For two-form $v_{rs} dz_r \wedge dz_s$, with $v_{rs} = -v_{sr}$, we have by \cref{e1},
\begin{equation}
T^{\sharp} (v) := \frac12 T_p{}^{rs} v_{rs} dz_p
=
\sum_{s=1}^n z_s v_{ps}dz_p.
\end{equation}
Therefore, by \cref{e:1243}, we can verify as in \cref{prop:41} that 
\begin{equation}
\partial^{\ast} v
=
-(n - \alpha -2) \sum_{s=1}^n z_s v_{rs} dz_r.
\end{equation}
For $u = u_j dz_j$, we have
\begin{equation*}
\partial u
=
\frac{1}{2}\sum_{j,k} \left(\frac{\partial u_k}{\partial z_j} - \frac{\partial u_{j}}{\partial z_k}\right) dz_j \wedge dz_k,
\end{equation*}
and thus
\begin{equation}
\partial^{\ast} \partial u
=
(n-\alpha -2) \sum_{k=1}^n\sum_{j=1}^n  \left(\frac{\partial u_k}{\partial z_j} - \frac{\partial u_{j}}{\partial z_k}\right) z_j dz_k. 
\end{equation}
On the other hand, 
\begin{equation}
\partial\partial^{\ast}u
=
(n-\alpha -1) \sum_{k=1}^n \left( u_k + \sum_{j=1}^n z_j \frac{\partial u_j}{\partial z_k}\right)\, dz_k.
\end{equation}
Consequently,
\begin{equation}
\widetilde{\Box}_1 u
=
(n-\alpha -1) u + \sum_{k=1}^n \sum_{j=1}^n \left((n-\alpha - 2) \frac{\partial u_k}{\partial z_j} +  \frac{\partial u_j}{\partial z_k}\right)z_jdz_k.
\end{equation}
Unlike the cases of Segal-Bargmann space \cite{haslinger} and weighted Bergman space with hyperbolic metric \cite{hasson}, this is not a diagonal operator. Nevertheless we can apply the methods from Theorem 5.4 of \cite{hasson} to get the following
\begin{theorem}\label{prop:51} Let $h$ be the half hyperbolic metric on the unit ball $\mathbb{B}^n$, $\alpha < 0$, and $\psi(z) = \alpha \log (1-|z|^2)$. Then the complex Laplacian $\widetilde{\Box}_1$ has a bounded inverse $\widetilde{N}_1,$ which is a compact operator on $A^2_{(1,0)}(\mathbb{B}^n, h , e^{-\psi})$ with discrete spectrum.	In addition, if
	\begin{equation}
	\nu = 
		\begin{cases}
		-\alpha, \quad & \text{if}\ \ n=1, \\
		\min\{1-\alpha, -2\alpha\}, & \text{if}\ \  n =2, \\
		n-\alpha -1, & \text{if}\ \ n\geqslant 3.
	\end{cases}
	\end{equation}
	then
	\begin{equation}
	\left\| \widetilde{N}_1 u \right\| \leqslant \frac{1}{\nu} \, \|u\|,
	\end{equation} 
	for each $u \in A^2_{(1,0)}(\mathbb{B}^n, h , e^{-\psi}).$
	
	Consequently, if $\eta = \eta_j dz_j \in A^2_{(1,0)}(\mathbb{B}^n, h, e^{-\psi})$ with $\partial \eta =0$, then $f: =\partial^{\ast} \widetilde{N}_1 \eta$ is the canonical solution of $\partial f = \eta, $ this means $\partial f = \eta $ and $f \in (\ker \partial )^\perp$. Moreover, 
	\begin{align}\label{cont61}
	\int_{\mathbb{B}^n} \left|f\right|^2 & (1-|z|^2)^{-\alpha-1} d\lambda \leqslant
	\frac{1}{\nu } \int_{\mathbb{B}^n} \left(\sum_{j=1}^n |\eta_j|^2 - \left|\sum_{j}^{n} \eta_j z_j\right|^2\right)(1-|z|^2)^{-\alpha-1}d\lambda.
	\end{align}
\end{theorem}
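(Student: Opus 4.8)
The plan is to diagonalize $\boxop_1$ completely on polynomial forms, read off its spectrum, and then deduce the operator-theoretic assertions together with the solution estimate by a standard Hodge-theoretic argument. As noted above, the restrictions of polynomials are dense in the relevant Bergman spaces, so it suffices to understand $\boxop_1$ on $(1,0)$-forms with polynomial coefficients. First I would split this space into the finite-dimensional subspaces $V_d$ of forms $u=\sum_k u_k\,dz_k$ whose coefficients $u_k$ are all homogeneous of the common degree $d$. Because the weight $(1-|z|^2)^{-\alpha-1}$ is radial, the angular integration shows that homogeneous components of different degrees are orthogonal in the inner products of both $A^2_{(0,0)}$ and $A^2_{(1,0)}$; hence the $V_d$ are mutually orthogonal, the Hilbert space is their orthogonal direct sum, and (as one checks from the explicit expression for $\boxop_1$ derived above) $\boxop_1$ preserves each $V_d$. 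The spectral problem thus reduces to diagonalizing a Hermitian operator on each finite-dimensional $V_d$.

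On $V_d$ I would rewrite $\boxop_1$ with the help of the Euler operator. Setting $g:=\sum_j z_j u_j$ and using $\sum_j z_j\,\partial u_k/\partial z_j=d\,u_k$ together with $\sum_j z_j\,\partial u_j/\partial z_k=\partial_k g-u_k$, the expression for $\boxop_1$ collapses to
\begin{equation}
	\boxop_1 u=(n-\alpha-2)(d+1)\,u+\partial\bigl(\textstyle\sum_j z_j u_j\bigr),
\end{equation}
so that on $V_d$ one has $\boxop_1=(n-\alpha-2)(d+1)\,I+\partial\iota$, where $\iota(u)=\sum_j z_j u_j$ maps $V_d$ onto the space $P_{d+1}$ of degree-$(d+1)$ homogeneous polynomials and $\partial$ maps $P_{d+1}$ back into $V_d$. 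The crucial algebraic fact is the Euler identity $\iota\partial=(d+1)\,I$ on $P_{d+1}$: it shows that $\partial$ is injective on $P_{d+1}$, that every element of $\partial(P_{d+1})$ is an eigenvector of $\partial\iota$ with eigenvalue $d+1$, and that $\ker(\partial\iota)=\ker\iota$. Since $\boxop_1|_{V_d}$ is Hermitian, the two subspaces $\ker\iota$ and $\partial(P_{d+1})$ are orthogonal and span $V_d$, and the eigenvalues of $\boxop_1$ are
\begin{equation}
	(d+1)(n-\alpha-2)\ \ \text{on}\ \ker\iota\cap V_d,\qquad (d+1)(n-\alpha-1)\ \ \text{on}\ \partial(P_{d+1}),
\end{equation}
for $d\geqslant 0$, the first family being absent whenever $\ker\iota\cap V_d=0$ (this occurs for $d=0$, and for every $d$ when $n=1$).

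With the two eigenvalue families in hand, I would find the least positive eigenvalue $\lambda_1$ by a short case analysis: comparing the smallest member $n-\alpha-1$ of the second family (at $d=0$) with the smallest member $2(n-\alpha-2)$ of the first family (at $d=1$, which is present only for $n\geqslant 2$) produces exactly the stated value of $\nu$ in the cases $n=1$, $n=2$, and $n\geqslant 3$, the hypothesis $\alpha<0$ guaranteeing positivity throughout. As all eigenvalues are positive, $\boxop_1$ is invertible; as each $V_d$ is finite-dimensional and the eigenvalues grow linearly in $d$, they have finite multiplicity and accumulate only at $+\infty$, so $\widetilde{N}_1=\boxop_1^{-1}$ is compact with discrete spectrum and operator norm $1/\lambda_1=1/\nu$, which gives the asserted bound.

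Finally, for the $\partial$-equation I would run the standard Hodge argument. Applying $\partial$ to $\eta=\boxop_1\widetilde{N}_1\eta=\partial\partial^{\ast}\widetilde{N}_1\eta+\partial^{\ast}\partial\widetilde{N}_1\eta$ and using $\partial^2=0$ and $\partial\eta=0$ gives $\partial\partial^{\ast}\partial\widetilde{N}_1\eta=0$; pairing this with $\partial\widetilde{N}_1\eta$ forces $\partial^{\ast}\partial\widetilde{N}_1\eta=0$, so that $\eta=\partial f$ with $f=\partial^{\ast}\widetilde{N}_1\eta\in\operatorname{im}\partial^{\ast}\subset(\ker\partial)^{\perp}$, the canonical solution. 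The estimate then follows from
\begin{equation}
	\|f\|^2=(\partial^{\ast}\widetilde{N}_1\eta,\partial^{\ast}\widetilde{N}_1\eta)=(\partial\partial^{\ast}\widetilde{N}_1\eta,\widetilde{N}_1\eta)=(\eta,\widetilde{N}_1\eta)\leqslant\tfrac1\nu\|\eta\|^2,
\end{equation}
after substituting the explicit norms on $A^2_{(0,0)}$ and $A^2_{(1,0)}$. I expect the main obstacle to be twofold. The coupling term $\partial\iota$ renders $\boxop_1$ non-diagonal on monomials, unlike the Segal--Bargmann and full complex-hyperbolic cases treated in \cite{haslinger,hasson}, so the algebraic diagonalization via the Euler identity is the real crux. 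Moreover, the half hyperbolic metric is \emph{incomplete}, so the formula for $\partial^{\ast}$ and the self-adjointness of $\boxop_1$ cannot simply be assumed: they must be justified by the same cutoff and boundary-term estimate already carried out in \cref{prop:41}.
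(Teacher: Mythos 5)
Your proposal is correct, and its outer skeleton coincides with the paper's: density of polynomial forms, reduction to the finite-dimensional invariant subspaces of forms with homogeneous coefficients (your $V_d$ are the paper's $A^2_{(1,0)}(m)$), a spectral analysis on each piece, and the standard Neumann-operator/Hodge argument for the $\partial$-equation at the end (which the paper simply imports from Theorem 5.4 of \cite{hasson}). Where you genuinely diverge is the core spectral step. The paper writes out the matrix of $\boxop_1$ in the monomial basis $z^{\Lambda}dz_l$ and applies Ger\v{s}gorin's circle theorem, obtaining only the lower bound $2(n-\alpha-2)$ for the eigenvalues on $A^2_{(1,0)}(m)$, $m\geqslant 1$, which must then be supplemented by explicit small-case computations and by exhibiting $z_1dz_2-z_2dz_1$ as an actual eigenvector when $n=2$. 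You instead diagonalize exactly: the identity $\boxop_1|_{V_d}=(n-\alpha-2)(d+1)I+\partial\iota$ (which is correct --- it follows from the paper's displayed formula for $\boxop_1$ via Euler's identity) combined with $\iota\partial=(d+1)I$ on $P_{d+1}$ gives the splitting $V_d=\ker\iota\oplus\partial(P_{d+1})$ and the exact eigenvalues $(d+1)(n-\alpha-2)$ and $(d+1)(n-\alpha-1)$. This is strictly more informative than the paper's route: it yields the full spectrum with eigenspaces and multiplicities (the Remark following the theorem in the paper becomes an immediate corollary), and it makes the case analysis defining $\nu$ transparent; as a consistency check, the paper's $n=2$, $m=1$ matrix has eigenvalues $-2\alpha$ and $2(1-\alpha)$, which are exactly your $2(n-\alpha-2)$ and $2(n-\alpha-1)$. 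The only cost is that one must spot the algebraic identity, whereas Ger\v{s}gorin requires no structural insight. Two small caveats, both of which you correctly flag: the passage from the finite-dimensional spectra to the spectrum of the closed operator $\boxop_1$ needs the standard lemma the paper cites (Lemma 5.1 of \cite{hasson}, or \cite{davies1995spectral}), and since the half hyperbolic metric is incomplete, the formula $\partial^{\ast}u=(n-1-\alpha)\sum_j z_ju_j$ and the self-adjointness it feeds into must be justified by the cutoff/boundary-term argument of \cref{prop:41} rather than assumed.
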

\begin{remark}
	If $n=1$ or $n\geqslant 3$, then the first positive eigenvalue of $\boxop_1$ is $\lambda_1=n-1-\alpha$ with the multiplicity $n$. If $n=2$, there are three subcases: If $-1 < \alpha < 0$, then $\lambda_1 = -2\alpha$ is a simple eigenvalue and the corresponding eigenspace $E_1$ is spanned by $z_1dz_2 - z_2 dz_1$; if $\alpha = -1$, then $\lambda_1 = 2$ with multiplicity 3 and $E_1$ is spanned by $dz_1, dz_2$, and $ z_1dz_2 - z_2 dz_1$; if $\alpha < -1$, then $\lambda_1 = 1-\alpha$ with multiplicity 2 and $E_1$ is spanned by $dz_1$ and $dz_2$.
\end{remark}
\begin{proof}
The subspaces
\begin{equation}
A^2_{(1,0)}(m) := \mathrm{span}\, \left\{c_J z^J dz_l \colon, |J| = m, l = 1,2,\dots, n \right\}, \quad m = 0, 1,2,\dots 
\end{equation}
are invariant under the action of $\boxop_1$. Using a standard result in spectral theory (see Lemma~5.1 of \cite{hasson} or \cite{davies1995spectral}), we can study the spectrum of $\boxop_1$ by study the spectra of its restrictions onto finite dimensional subspaces $A^2_{(1,0)}(m)$.  If $n=1$, then each subspace is one-dimensional. Moreover, write $z_1 = z$, we have
\begin{equation}
\boxop_1(z^k dz) = -(k+1) \alpha z^k dz.
\end{equation}
We find that, when $n=1$, $\boxop_1$ has simple eigenvalues $-\alpha, -2\alpha, \dots \to +\infty$ since $\alpha < 0$.

Consider the case $n\geqslant 2$. When $m = 0$, $A^2_{(1,0)}(0)$ is spanned by $dz_1, dz_2, \dots , dz_n$ and $\boxop_1 (dz_k) = (n-\alpha -1)\, dz_k$ and hence $n-\alpha -1$ is an eigenvalue for $\boxop_1$. When $m = 1$, $A^2_{(1,0)}(1)$ has dimension $n^2$ and is spanned by $z_j dz_k$, $j,k = 1, \dots n$. For example, if $n=2$ then the matrix representation of $\boxop_1$ in the basis $e_1:= z_1 dz_1, e_2:=z_1 dz_2, e_3 := z_2 dz_1$, and $e_4:=z_2 dz_2$ is the following constant column-sum matrix
\begin{equation}
\begin{pmatrix}
2-2 \alpha  & 0 & 0 & 0 \\
0 & 1-2\alpha & 1 & 0 \\
0 & 1 & 1-2\alpha & 0 \\
0 & 0 & 0 & 2-2\alpha  \\
\end{pmatrix}
\end{equation}
whose eigenvalues are $-2\alpha$ and $2(1-\alpha)$, the latter has  multiplicity~3, and the matrix is diagonalizable. Observe that $-2\alpha$ is an eigenvalue for all $n\geqslant 2$.

Consider the case $m=2$ and $n=2$, $A^2_{(1,0)}(2)$ has a basis of 6 vectors: $e_1 = z_1^2 dz_1$, $e_2 = z_1^2 dz_2$, $e_3 = z_1z_2 dz_1$, $e_4 = z_1z_2 dz_2$, $e_5 = z_2^2 dz_1$, and $z_6 = z_2^2 dz_2$. The matrix representation of $\boxop_1$ in this basis is
\begin{equation}
\begin{pmatrix}
3-3 \alpha  & 0 & 0 & 0 & 0 & 0 \\
0 & 1-3\alpha & 1 & 0 & 0 & 0 \\
0 & 2 & 2-3\alpha & 0 & 0 & 0 \\
0 & 0 & 0 & 2-3\alpha & 2 & 0 \\
0 & 0 & 0 & 1 & 1-3\alpha  & 0 \\
0 & 0 & 0 & 0 & 0 & 3 - 3 \alpha
\end{pmatrix}
\end{equation}
The eigenvalues of this matrix are $3(1-\alpha)$ (multiplicity 4) and $-3\alpha$ (multiplicity 2).

Let $\Lambda = (\lambda_1,\lambda_2,\dots , \lambda_n)$ be a multi-index and let $|\Lambda| = \lambda_1+ \lambda_2 + \cdots + \lambda_n$. If $k\ne l$, we define the multi-index
\begin{equation}
	\Lambda_{j,l} = (\lambda_1, \dots, \lambda_{l-1}, \lambda_l+1, \lambda_{l+1}, \dots , \lambda_{j-1}, \lambda_j-1, \lambda_{j+1}, \dots, \lambda_n),
\end{equation}
when $j > l$ and similarly for $l<j$. That is, the operation $\Lambda \mapsto \Lambda_{j,l}$ adds 1 to $l^{th}$-index and subtracts $1$ from $j^{th}$-index. Clearly, $|\Lambda_{j,l}| = |\Lambda|$.

If $u = z^{\Lambda} dz_l$ where $\Lambda = (\lambda_1,\lambda_2,\dots , \lambda_n)$ is a multi-index, then
\begin{align}
	\boxop_1 u 
	& = \left( (|\Lambda|+1)(n-\alpha -1) - |\Lambda| + \lambda_l\right) z^{\Lambda} dz_l + \sum_{j\ne l}\lambda_j z^{\Lambda_{j,l}} dz_j.
\end{align}

Suppose that $e_{\gamma} = z^{\Lambda^{\gamma}} dz_{l_{\gamma}} \ , \Lambda^\gamma =( \lambda_1^\gamma, \dots ,\lambda_n^\gamma ),  \ |\Lambda_{\gamma}| = m,  \ \gamma = 1,2,\dots, N$, be a basis for the space $A^2_{(1,0)}(m)$. Write
\begin{equation}
	\boxop_1(e_{\beta}) = \sum_{\gamma} a_{\gamma \beta} e_{\gamma}.
\end{equation}
The matrix representation for $\boxop_1$ on $A^2_{(1,0)}(m))$ is a constant sum column matrix; the sum of the entries of each column is
\begin{equation}
\sum_{\beta}^N a_{\gamma\beta}
=
(m+1)(n-\alpha-1), \quad N = n\binom{n+m-1}{n-1},
\end{equation}
while the diagonal entries are of the form
\begin{align}\label{gen}
(m+1)(n-\alpha-1) + \lambda_{l} - m.
\end{align}
Take $\gamma \ne \beta$.
Clearly, if $l_{\gamma} = l_{\beta}$ then $a_{\gamma\beta} = 0$. If $l_\gamma \ne l_{\beta}$ and if $\Lambda^{\beta}_{l_{\gamma},l_{\beta}} \ne \Lambda^{\gamma}$, then $a_{\gamma\beta} = 0$.
Finally, if $l_\gamma \ne l_{\beta}$ and $\Lambda^\beta _{l_{\gamma},l_{\beta}} = \Lambda^{\gamma}$, then 
\begin{equation}
	a_{\gamma\beta} = \lambda^{\beta}_{l_{\gamma}} = \lambda^{\gamma}_{l_{\gamma}} + 1.
\end{equation}
Thus, we have for each fixed $\gamma$,
\begin{equation}
	\sum_{\beta} a_{\gamma\beta}\ = \sum_{\beta,l_{\gamma} \ne l_{\beta},\Lambda^\beta _{l_{\gamma},l_{\beta}} = \Lambda^{\gamma}} (\lambda^{\gamma}_{l_{\gamma}} + 1)
	=
	q_{\gamma} (\lambda^{\gamma}_{l_{\gamma}} + 1).
\end{equation}
where $q_{\gamma}$ equals the number of nonzero index in the multi-index $\Lambda^{\gamma}$ other than $\lambda_{l_{\gamma}}$; in particular, $q_{\gamma} \leqslant n-1$. We first consider the case $\lambda_{l_{\gamma}}^{\gamma} \leqslant m-2$. Then and \cref{gen} show that 
\begin{align}
	\delta_{\gamma}:=a_{\gamma\gamma} - \sum_{\beta \ne \gamma}a_{\gamma \beta}
	& \geqslant 
	((m+1)(n-\alpha-1) + \lambda^{\gamma}_{l_{\gamma}} - m) - (n-1)(\lambda^{\gamma}_{l_{\gamma}} + 1) \notag \\
	& =
	-\alpha (m+1) + (n-2) (m-\lambda^{\gamma}_{l_{\gamma}}) \notag \\
	& \geqslant -\alpha (m+1) + 2(n-2).\notag  \\
	& \geqslant 2(n-\alpha-2).
\end{align}
If $\lambda_{l_{\gamma}}^{\gamma} = m-1$, then $q_{\gamma} = 1$ and in this case $\delta_{\gamma} = (m+1)(n-\alpha-2)$. If $\lambda_{l_{\gamma}}^{\gamma} = m$, then $q_{\gamma} = 0$ and $\delta_{\gamma} = (m+1)(n-\alpha - 1)$. Thus, in any case
\begin{equation}
	\delta_{\gamma} \geqslant 2(n-\alpha-2).
\end{equation}
By theorem of Ger{\v{s}}gorin \cite{gervsgorin1931abgrenzung}, the eigenvalues of $[a_{\alpha\beta}]$ must be in the union of the circles centered at $a_{\gamma\gamma}$ with radius $R_{\gamma} = a_{\gamma\gamma}-\delta_\gamma$, $\gamma = 1,2,\dots, N$. Consequently, the eigenvalues must be larger than $2(n-\alpha-2)$. Moreover, for $m\geqslant 2$, these eigenvalues of $\boxop_1$ on $A^2_{1,0}(m)$ are larger than $-\alpha(m+1) \to \infty$. This shows that the bounded inverse $\widetilde{N}_1$ is a bounded and compact operator.

When $n = 2$, $2(n-\alpha-2) = -2\alpha$ is an eigenvalue and the corresponding eigenspace in $A^2_{(1,0)}(1)$ is spanned by $z_1dz_2 - z_2dz_1$. Thus the first positive eigenvalue in this case is 
\begin{equation}
	\lambda_1
	=
	\min \{ 1-\alpha, -2\alpha\}.
\end{equation}

When $n\geqslant 3$, we always have $2(n-\alpha-2) > n-\alpha -1$ since $\alpha <0$ and thus $\lambda_1 = n-\alpha -1$.
The proof is complete.
\end{proof}

\end{document}